\newcommand{\no}[1]{#1}
\renewcommand{\no}[1]{}
\renewcommand{\Delta}{\upDelta}}
\date{\today}
\newtheorem{theorem}{Theorem}[section]
\newtheorem{proposition}{Proposition}[section]
\newtheorem{lemma}{Lemma}[section]
\newtheorem{corollary}{Corollary}[section]
\theoremstyle{remark}
\newtheorem{remark}{Remark}[section]
\newcommand{\bel}{\begin{equation} \label}
\newcommand{\ee}{\end{equation}}
\newcommand{\dd}{\mathrm d}
\newcommand{\R}{{\mathbb R}}
\newcommand{\U}{{\mathcal U}}
\newcommand{\Z}{{\mathbb Z}}
\def\beq{\begin{equation}}
\def\eeq{\end{equation}}
\newcommand{\bea}{\begin{eqnarray}}
\newcommand{\eea}{\end{eqnarray}}
\newcommand{\beas}{\begin{eqnarray*}}
\newcommand{\eeas}{\end{eqnarray*}}
\newcommand{\para}[1]{\left(#1\right)}
\newcommand{\cro}[1]{\left[#1\right]}
\newcommand{\norm}[1]{\left\Vert#1\right\Vert}
\newcommand{\abs}[1]{\left\vert#1\right\vert}
\numberwithin{equation}{section}
\title[Stability estimate for
the aligned magnetic field in a periodic quantum waveguide]{Stability estimate for
the aligned
magnetic field in a periodic quantum waveguide from Dirichlet-to-Neumann map }
\author[Youssef Mejri]{Youssef Mejri}
\address{CPT, UMR CNRS 7332, Aix Marseille Universit\'e, 13288 Marseille, France, D\'ep. Des Math\'ematiques, Facult\'e Des Sciences De Bizerte, 7021 Jarzouna, Tunise, Laboratoire De Mod\'elisation Math\'ematique Et Num\'erique Dans Les Sciences De L'Ing\'enieur, ENIT BP 37, Le Belvedere 1002 Tunis}
\email{josef-bizert@hotmail.fr}
\date{}
\begin{document}

\begin{abstract} 
In this article, we study the boundary inverse problem of determining the aligned magnetic field
appearing in the magnetic Schr\"odinger equation in a periodic quantum cylindrical waveguide. Provided that the Dirichlet-to-Neumann map of the magnetic Schr\"odinger equation. We prove a H\"older stability estimate with respect to the Dirichlet-to-Neumann map, by means of the geometrical optics solutions of the magnetic Schr\"odinger equation.

\medskip
\noindent
{\bf Keywords :} Schr\"odinger equation, inverse problem, stability estimate, periodic  magnetic potential, infinite cylindrical waveguide.

\end{abstract}

\maketitle

\tableofcontents

\section{Introduction}
\label{introduction}
\setcounter{equation}{0}

\subsection{Statement of the problem and existing papers}
In the present paper we consider an infinite waveguide $\Omega=\R \times \Omega'$, where
$\Omega'$ is a bounded domain of $\R^2$ with $C^{2}$-boundary $\partial \Omega'$.
We assume without limiting the generality of the foregoing that $\Omega'$ contains the origin.
For shortness sake we write $x=(x_1,x')$ with $x'=(x_2,x_3)$ for every $x=(x_1,x_2,x_3) \in \Omega$. Given $T>0$, we consider the following initial boundary value problem (IBVP in short) for the   Schr\"odinger equation with a magnetic potential,
\begin{equation}
\label{(1.1)}
\left\{
\begin{array}{ll}
(i\partial _t +\Delta_A)u=0 & \mbox{in}\ Q=(0,T)\times \Omega,
\\
u(0,\cdot)=0 &\mbox{in}\ \Omega,
\\
u=g &\mbox{on}\ \Sigma =(0,T) \times \partial \Omega,
\end{array}
\right.
\end{equation}
where
$$
\Delta_A=\sum_{j=1}^3(\partial_j+ia_j)^2=\Delta+2iA\cdot\nabla+i  \mbox{div}(A)-|A|^2.
$$
Throughout the entire paper we assume that the magnetic potential $A=(a_j)_{1\leq j\leq 3}\in W^{3,\infty}(\Omega;\R^3)$ is $1$-periodic with respect to the infinite variable $x_1$:
\begin{equation}\label{(1.2)}
A(x_1+1,\cdot)=A(x_1,\cdot),\ x_1 \in \R.
\end{equation}
Let us consider the Dirichlet-to-Neumann map (DN map in short)
\bel{a1}
\Lambda_A(g)=\left( \partial _\nu+iA\cdot\nu \right)u,
\ee
where $\nu=\nu\left(x\right)$ denotes the unit outward normal to $\partial\Omega$ at $x$ and $u$ is the solution to (\ref{(1.1)}). In this paper we address the inverse problem of determining the magnetic potential $A$ from the knowledge of $\Lambda_A$.
First of all, let us observe that there is an obstruction to uniqueness. In fact as it was noted in \cite{Esk8}, the DN map is invariant under the gauge transformation of the magnetic potential: it ensues from the identities
\begin{equation}\label{(40)}
e^{-i\Psi}\Delta_{A}e^{i\Psi}=\Delta_{A+\nabla\Psi},\,\,\,\,\,\,e^{-i\Psi}\Lambda_{A}e^{i\Psi}=\Lambda_{A+\nabla\Psi},
\end{equation}
imply that $\Lambda_A=\Lambda_{A+\nabla\Psi}$ when $\Psi\in \mathcal{C}^1(\Omega)$ is such that $\Psi_{\mid {\partial\Omega}}=0$. Therefore, the magnetic potential $A$ cannot be uniquely determined by the DN map $\Lambda_A$. From a geometric view point the vector field $A$ defines
the connection given by the one form $\alpha_A=\sum_{j=1}^3 a_j dx_j,$ and the non-uniqueness manifested in $(\ref{(40)})$ says that the best we could hope to reconstruct from the DN map  $\Lambda_A$ is the 2-form called
the magnetic field $d\alpha_A$ given by
$$
d{\alpha_A}=\sum_{i,j=1}^3\displaystyle(\frac{\partial a_i}{\partial x_j}-\frac{\partial a_j}{\partial x_i})\, \mbox{d}x_j\wedge \mbox{d}x_i.
$$
For the  Schr\"odinger equation in a domain with obstacles, Eskin proved in \cite{Esk8} that the knowledge of the DN map determines uniquely the potential. The main ingredient in his proof is the construction of geometric optics solutions. In the absence of the magnetic potential, Bellassoued and Ferreira prove in \cite{BD} that the DN map determines uniquely the electric potential. The problem of stability in determining the time-independent electric potential in a Schr\"odinger equation from a single boundary measurement was studied by Baudouin and Puel in \cite{LP}. This result was improved by Mercardo, Osses and Rosier in \cite{MOR}. Their method is essentially based on an appropriate Carleman inequality. In these two papers, the main assumption is that the part of the boundary where the measurement is made must satisfy a geometric condition insuring observability (see Bardos, Lebeau and Rauch \cite{BLR}). Rakesh and Symes proved in \cite{RS.1} that the knowledge of the DN map determines uniquely the time-independent scalar potentials in a wave equation. This result was extended to time-dependent scalar potential in \cite{RS.2}.In \cite{BB}, Bellassoued and Benjoud prove that the DN map determines uniquely the magnetic field induced by a magnetic potential in a magnetic wave equation. In this work, the DN map gives on the whole boundary. The uniqueness by a local DN map is well solved (Belishev \cite{B}, Eskin \cite{Esk7}, \cite{Esk19}, Eskin-Ralston \cite{ER}, Katchlov, Kurylev and lassas \cite{KKL}). \\ In \cite{BC}, Bellassoued and Choulli consider the problem of determining the time-independent magnetic field of the dynamic Schr\"odinger equation from the knowledge of the DN map.\\
In all the above mentioned articles the Schr\"odinger equation is defined in a bounded domain. Actually is only a small number of mathematical papers dealing with inverse boundary value problems in an unbounded domain see \cite{K}, \cite{KLU} and \cite{SW}. In \cite{LU} the compactly supported coefficients appearing in an infinite slab were identified from the knowledge of partial DN data. In \cite{CKS} Choulli, Kian and Soccorsi prove logarithmic stability in the determination of the time-dependent scalar potential in a 1-periodic quantum cylindrical waveguide from the knowledge of the DN map. In the present paper we rather investigate the problem of determining the aligned magnetic field appearing in the magnetic Schr\"odinger equation in a periodic quantum cylindrical waveguide from the DN map.
\subsection{Main results}
In this subsection we state the main results of this article. We first need to define the trace operator $\tau$ by
$$
\tau w=w_{|\Sigma},\,\,\,\mbox{for}\,\, w\in C^{\infty}_0\para{[0,T]\times\R,C^\infty\para{\overline{\Omega'}}}.
$$
Recall that since $\Omega'$ is a domain of $\R^2$ with $C^{2}$-boundary, we can extend $\tau$ to a bounded operator from $H^2\para{0,T; H^2\para{\Omega}}$ into $L^2\para{\para{0,T}\times\R,H^{3/2}\para{\partial \Omega'}}$. Then the space $X_0=\tau{H^2\para{0,T; H^2\para{\Omega}}}$ endowed with the norm
$$
\parallel w\parallel_{X_0}=\inf\{\|W\|_{H^2\para{0,T; H^2\para{\Omega}}};\,W\in H^2\para{0,T; H^2\para{\Omega}}\,\,\mbox{such that}\,\tau W=w\},
$$
is Hilbertain. Moreover, as will be seen in Section $ \ref{sec-bo} $, the linear operator $\Lambda_A$ defined by \eqref{a1}, is bounded from $X_0$ to $L^2\para{\Sigma}$. Next, we introduce the following subset of $W^{1,\infty}\para{\Omega; \R^3}$
$$
\mathcal{A}=\{ A\in W^{1,\infty}\para{\Omega; \R^3}; \,\, \|A\|_{L^{\infty}\para{\Omega}}<\frac{\sqrt{2}-1}{C\para{\Omega'}^{1/2}}\},
$$
where ${C\para{\Omega'}}$ is the smallest Poincar\'e constant associated with $\Omega'$, i.e the smallest of those constants $C>0$ such that we have $$\displaystyle \parallel u \parallel_{L^2\para{\Omega}}^2\leq C\parallel \nabla u \parallel_{L^2\para{\Omega}}^2,\,\,u\in H_0^1\para{\Omega
}.$$
Last, putting
\bel{che}
\check{\Omega}=(0,1)\times \Omega',\ \check{Q}=(0,T)\times \check{\Omega},\ \check{\Sigma}=(0,T)\times (0,1)\times \partial \Omega',
\ee
we may now state the main result of this paper.
\begin{theorem}
\label{thm1}
Let $M>0$ and $A_1,\ A_2 \in W^{3,\infty}(\Omega;\R^3)\cap \mathcal{A}$ fulfill $\eqref{(1.2)}$ together with the
three following conditions:
\begin{equation}
\|A_i\|_{W^{3,\infty}(\check{\Omega})}\leq M,\ \ i=1,2,
\end{equation}
\bel{(1.3)}
A_1=A_2 \,\,\, \mbox{on}\ \para{0,1}\times \partial\Omega',
\ee
\bel{(1.4)}
\partial_jA_1=\partial_jA_2 \,\,\,\mbox{on}\  \para{0,1}\times \partial\Omega',\,\,j=1,2,3.
\ee
Then there exist a constants  $C>0$  depending only on $A_1$, $T$, $\Omega'$ and $M$, such that we have
$$
\left\| \frac{\partial a_2}{\partial x_3}-\frac{\partial a_3}{\partial x_2} \right\|_{L^2(\check{\Omega})}\leq C \| \Lambda_{A_2}-\Lambda_{A_1} \|^{
2/45}.
$$
\end{theorem}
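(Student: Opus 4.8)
The plan is to combine an orthogonality identity for the two Dirichlet-to-Neumann maps with a family of geometric optics (GO) solutions of the two magnetic Schr\"odinger equations, and then to run a Fourier--interpolation argument, adapting the classical scheme to the unbounded periodic geometry. Since the gauge obstruction \eqref{(40)} is already isolated, the recoverable object is the component $\partial_2a_3-\partial_3a_2$ of the magnetic two-form, and I work throughout with $A:=A_2-A_1=(a_1,a_2,a_3)$. By \eqref{(1.3)}, \eqref{(1.4)} and the periodicity \eqref{(1.2)}, $A$ and its first derivatives vanish on all of $\partial\Omega=\R\times\partial\Omega'$; hence the extension of $A$ by zero to $\R^3$ lies in $W^{2,\infty}$, which makes the transverse line integrals of $A$ finite and, more importantly, annihilates the boundary term $\int_\Sigma(A\cdot\nu)(\cdots)$ in the identity below.

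I would first establish the integral identity. Let $u_1$ be a GO solution of $(i\partial_t+\Delta_{A_1})u_1=0$ with $u_1(0,\cdot)=0$, set $g:=\tau u_1$ on $\Sigma$, let $u_2$ solve \eqref{(1.1)} with potential $A_2$ and the same $g$, and let $v$ be a backward GO solution of $(i\partial_t+\Delta_{A_2})v=0$ with $v(T,\cdot)=0$. Green's formula applied to $w:=u_2-u_1$ and $v$, together with $\Delta_{A_2}-\Delta_{A_1}=2iA\cdot\nabla+i\,\mathrm{div}A-A\cdot(A_1+A_2)$ and $A\cdot\nu=0$ on $\partial\Omega$, gives
\[ \int_\Sigma(\Lambda_{A_2}-\Lambda_{A_1})g\,\overline v\,\dd\sigma\,\dd t=-\int_Q\bigl(2iA\cdot\nabla u_1+i(\mathrm{div}A)u_1-A\cdot(A_1+A_2)u_1\bigr)\overline v\,\dd x\,\dd t, \]
whose left-hand side is $\le\|\Lambda_{A_2}-\Lambda_{A_1}\|\,\|g\|_{X_0}\,\|v\|_{L^2(\Sigma)}$; the mapping property of $\Lambda_{A_i}$ from Section~\ref{sec-bo} makes this admissible for the data we will use.

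For the GO solutions, since $x_1$ is unbounded and $A$ is only periodic (not compactly supported) in $x_1$, I would oscillate only in the transverse variable: with a large parameter $\tau$, a unit vector $\omega\in\mathbb S^1\subset\rd$, an auxiliary frequency $\sigma\in\R$, an integer $k$, and a cut-off $\chi\in C_0^\infty(\R)$ in the infinite variable, take $u_j=e^{i(\tau\psi_j(x')+2\pi kx_1-c_jt)}\chi(x_1)(b_j+r_j)$ with $\psi_j$ affine, $\nabla'\psi_j\approx\omega$, $c_j$ fixed by the Schr\"odinger eikonal equation, and amplitudes $b_j=\exp\!\bigl(-i\int_{-\infty}^{\cdot}\omega\cdot A_j'\,\dd s\bigr)$ solving the transport equations $\omega\cdot\nabla'b_j+i(\omega\cdot A_j')b_j=0$ along the $\omega$-rays (here $A_j'=(a_{j,2},a_{j,3})$); the remainders $r_j=O(\tau^{-1})$ are produced and estimated from the well-posedness and energy bounds of Section~\ref{sec-bo}, with a mild cut-off near $t=0$ (resp. $t=T$) enforcing the initial (resp. final) condition. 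Plugging $u_1,v$ into the identity, the leading contribution is of order $\tau$ and equals, up to a nonzero constant, $\tau\int_Q(\omega\cdot A')\,b_1\overline{b_2}\,e^{i\sigma\omega^\perp\cdot x'}(\cdots)$; since $b_1\overline{b_2}$ carries the factor $\exp\!\bigl(i\int_{-\infty}^{r}\omega\cdot A'\,\dd s\bigr)$, integrating along each ray turns $(\omega\cdot A')\exp(i\int\omega\cdot A')$ into a total derivative and produces $\exp(i\mathcal X_\omega A')-1$, where $\mathcal X_\omega A'(x_1,y)=\int_\R(\omega\cdot A')(x_1,y+s\omega)\,\dd s$ is the transverse X-ray transform of the one-form $a_2\,\dd x_2+a_3\,\dd x_3$ at height $x_1$. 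Bounding the right-hand side by $C\tau^p\|\chi\|\,\|\Lambda_{A_2}-\Lambda_{A_1}\|+(\text{lower order in }\tau)$, then taking $\chi=\chi_N=N^{-1/2}\chi_1(\cdot/N)$ and letting $N\to\infty$, the averaging replaces the waveguide integral by an integral over the cell $\check\Omega$ and yields, for every $\omega,k,\sigma$, a bound on the Fourier coefficient (in $x_1$ and in the $\omega^\perp$-direction) of $\exp(i\mathcal X_\omega A')-1$ of the form $C\tau^{p-1}\|\Lambda_{A_2}-\Lambda_{A_1}\|+C\tau^{-1}$.

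It then remains to extract $\partial_2a_3-\partial_3a_2$. The smallness condition $A_i\in\mathcal A$ keeps $\mathcal X_\omega A'$ small enough in $L^\infty$ that $\exp(i\mathcal X_\omega A')-1$ is comparable to $i\mathcal X_\omega A'$, so the previous estimate transfers to $\mathcal X_\omega A'$; combining it with the a~priori $W^{3,\infty}$-bound (which forces polynomial decay of the Fourier transform) through a low/high-frequency split, and then optimizing in $\tau$, gives $\|\mathcal X_\omega A'\|_{L^2(\check\Omega)}\le C\,\delta^{\mu_1}$ with $\delta:=\|\Lambda_{A_2}-\Lambda_{A_1}\|$. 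Inverting the two-dimensional X-ray transform of the one-form $a_2\,\dd x_2+a_3\,\dd x_3$ slice by slice in $x_1\in(0,1)$, whose $d$-exact part is exactly $(\partial_2a_3-\partial_3a_2)\,\dd x_2\wedge\dd x_3$, costs one derivative which is absorbed by interpolating once more against the a~priori bound, and leads to $\|\partial_2a_3-\partial_3a_2\|_{L^2(\check\Omega)}\le C\,\delta^{2/45}$; the exponent $2/45$ is precisely what comes out of composing the three interpolation exponents — the $\tau$-optimization, the frequency cut-off, and the X-ray inversion. The steps I expect to be the real obstacles are the construction of the GO solutions on the unbounded periodic waveguide with remainder estimates uniform in the cut-off $\chi$ (which is where the energy estimates of Section~\ref{sec-bo} and the care near $t=0,T$ enter), and the exponent bookkeeping ensuring that the nonlinear X-ray step and the passage from the slices to $\check\Omega$ lose nothing beyond $2/45$.
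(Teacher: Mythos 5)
Your overall strategy (Green identity for the two DN maps, geometric optics solutions oscillating only in the transverse variable $x'$ with amplitudes $b_j=\exp(-i\int\omega\cdot A_j'\,\dd s)$ solving the transport equation, reduction to the X-ray transform of $\omega\cdot A'$, then a low/high frequency split against the a priori bound) is the paper's strategy, and your handling of the unbounded direction by a normalized cut-off $\chi_N$ and $N\to\infty$ is a plausible substitute for the paper's Floquet--Bloch--Gel'fand fiber decomposition ($\Lambda_A\simeq\int^\oplus\Lambda_{A,\theta}\,\dd\theta$, $\|\Lambda_A\|=\sup_\theta\|\Lambda_{A,\theta}\|$). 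But there is a genuine gap at the step where you pass from $\exp(i\mathcal X_\omega A')-1$ to $\mathcal X_\omega A'$. First, the smallness condition $A_i\in\mathcal A$ bounds $\|A\|_{L^\infty}$ by $(\sqrt2-1)C(\Omega')^{-1/2}$, where $C(\Omega')$ is a Poincar\'e constant; this does not make the line integrals $\mathcal X_\omega A'=\int_\R\omega\cdot A'(\cdot+s\omega)\,\dd s$ small, since those scale with the chord length of $\Omega'$ (up to $2R$), not with $C(\Omega')^{1/2}$ --- already for a disk the resulting bound on $|\mathcal X_\omega A'|$ exceeds $\pi$, so $z\mapsto e^{iz}-1$ need not be injective on the range and ``comparable to $i\mathcal X_\omega A'$'' can fail outright. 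Second, even granting pointwise smallness, what your identity controls are Fourier coefficients of $e^{i\mathcal X_\omega A'}-1$ in $(x_1,\xi')$, and smallness of those does not transfer to the Fourier coefficients of $\mathcal X_\omega A'$ without controlling the quadratic remainder of the exponential. The paper sidesteps this nonlinear X-ray problem entirely: it takes one of the two profiles to be a derivative, $\phi_1=\partial_j\overline\phi$, integrates by parts so that the time/ray integration produces $\mathcal P(\omega'\cdot\partial_jA')$ multiplied by the exponential weight $\exp(-i\int_\R\omega'\cdot A'\,\dd s)$, and then builds the compensating factor $\exp(\frac i2\int_\R\omega'\cdot A'\,\dd s)$ into $\phi_\theta$ so that $\phi^2$ cancels the weight exactly; what remains is literally $\widehat{\mathcal P\rho_j}(x_1,\xi')=\widehat{\rho_j}(x_1,\xi')$, which for $\xi'\perp\omega'$ recombines into $\widehat\beta_{23}$. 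You need this (or an equivalent) device; as written your argument does not recover $\partial_2a_3-\partial_3a_2$.

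A secondary point worth flagging: with a time-independent amplitude $b_j$ and phase $e^{i(\tau\psi(x')-c_jt)}$, the source for the remainder $r_j$ is $O(1)$ in $L^1_tL^2_x$ over the whole interval $(0,T)$, and the Schr\"odinger energy estimate then only gives $r_j=O(1)$, which is the same order (after dividing by $\tau$) as the term you want to isolate. The paper's remainder bound $\sigma\|\psi_\sigma\|_{L^2}\le C\mathcal N_{\omega'}(\phi)$ comes from using a profile $\Phi(2\sigma t,x)=\phi(x_1,x'-2\sigma t\omega')$ travelling at speed $2\sigma$ and supported in an annulus $\mathfrak D_R$ disjoint from $\Omega'$ at $t=0$ and $t=T$: the Duhamel integral then only sees the source for a time of order $R/\sigma$, which is where the crucial factor $\sigma^{-1}$ comes from, and the initial/final conditions hold without any temporal cut-off. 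You should adopt that ansatz rather than the stationary one.
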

Theorem \ref{thm1} follows from a result we shall make precise below, which is related to the following IBVP with quasi-periodic boundary conditions,
\begin{equation}
\label{(1.6)}
\left\{
\begin{array}{ll}
(i\partial_t +\Delta_ A) u=0 & \mbox{in}\ \check{Q}, \\
u(0,\cdot )=0 & \mbox{in}\ \check{\Omega}, \\
u=h & \mbox{on}\ \check{\Sigma},
\\
u(\cdot,1,\cdot)=e^{i\theta} u(\cdot,0,\cdot) & \mathrm{on}\ (0,T) \times \Omega',
\\
\partial_{x_1} u(\cdot,1,\cdot)=e^{i\theta} \partial_{x_1} u(\cdot,0,\cdot) & \mathrm{on}\ (0,T) \times \Omega',
\end{array}
\right.
\end{equation}
where $\theta$ is arbitrarily fixed in $[0,2\pi)$. To this purpose, for any subspace $\mathcal{O}=\para{0,1}\times\R^2$ or $\R^3$, we take
$$ H_{\theta}^1(\mathcal{O}) =\{ u \in H^1(\mathcal{O});\ u(1,\cdot)=e^{i \theta} u(0,\cdot)\,\, \mbox{in}\,\, \R^2 \},$$
and
$$ H_{\theta}^2(\mathcal{O}) =\{ u \in H^2(\mathcal{O});\ u(1,\cdot)=e^{i \theta} u(0,\cdot)\ \mbox{and}\ \partial_{x_1} u(1,\cdot)=e^{i \theta} \partial_{x_1} u(0,\cdot)\,\, \mbox{in}\,\,\R^2 \}.$$
We denote by $\check{\tau}$ the linear bounded operator from $H^2\para{0,T;H^2\para{\check{\Omega}}}$ into $L^2\para{\para{0,T}\times\para{0,1},H^{3/2}\para{\partial \Omega'}}$, such that
$$
\check{\tau} w=w_{|\check{\Sigma}}\,\,\mbox{for}\,\, w\in C^{\infty}_0\para{[0,T]\times\para{0,1},C^\infty\para{\overline{\Omega'}}}.
$$
and by $\check{X}_\theta$ the space $\check{X}_\theta=\check{\tau}\para{H^2\para{0,T;H_\theta^2\para{\check{\Omega}}}
}$. As will appear in Section $\ref{sec-FD},$ the operator
\bel{def-fibbdry}
\Lambda_{A,\theta} : h\in \check{X}_\theta   \longmapsto \left(
\partial_\nu +iA\cdot\nu\right)u \in L^2\para{\check{\Sigma}},
\ee
where $u$ is the solution to \eqref{(1.6)}, is bounded. The following result essentially claims that Theorem \ref{thm1} remains valid upon substituting $\Lambda_{A_j,\theta}$ for $A_j,\,\,\,j=1,2,\,\,\,$ for $\theta$ arbitrary in $[0,2\pi).$
\begin{theorem}
\label{thmm}
Let $A_1$ and $A_2$ be the same as in Theorem \ref{thm1}. Then we may find a
constants $C>0$ depending only $A_1,$ $T,$ $M$ and $\Omega'$, such that we have
$$
\left\| \frac{\partial a_2}{\partial x_3}-\frac{\partial a_3}{\partial x_2} \right\|_{L^2(\check{\Omega})}\leq C \| \Lambda_{A_2,\theta}-\Lambda_{A_1,\theta} \|^{
2/45},
$$
for any $\theta \in [0,2 \pi)$.
\end{theorem}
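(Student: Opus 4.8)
The plan is to prove Theorem~\ref{thmm} for an arbitrary fixed $\theta\in[0,2\pi)$ by the classical geometric optics (GO) method, carried out on the $\theta$-fibered problem \eqref{(1.6)} so that every object respects the quasi-periodic conditions in $x_1$ and every estimate is uniform in $\theta$. Write $A:=A_1-A_2=(a_j)_{1\le j\le 3}$ and let $w:=\partial a_2/\partial x_3-\partial a_3/\partial x_2$ be the quantity in the left-hand side of the theorem. By \eqref{(1.3)}--\eqref{(1.4)} and $A_i\in W^{3,\infty}$, the field $A$ together with its first derivatives vanishes on $(0,1)\times\partial\Omega'$, so $A$ extends by zero to a $W^{2,\infty}$ vector field on $(0,1)\times\R^2$, still $1$-periodic in $x_1$ and supported in $\check\Omega$; we keep the same notation. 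Since $\|w\|_{L^2(\check\Omega)}\le C(M,\Omega')$ always, we may and do assume throughout that $\delta:=\|\Lambda_{A_2,\theta}-\Lambda_{A_1,\theta}\|$ is as small as needed.

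First I would build GO solutions on $\check Q$. Fix a large parameter $\tau>1$, a frequency $\eta=(2\pi n+\theta,\eta')$ with $n\in\Z$, $\eta'\in\R^2\setminus\{0\}$, and the unit vector $\omega=(0,\omega')$ with $\omega'\in\R^2$, $|\omega'|=1$, $\omega'\perp\eta'$ (so $\omega\cdot\eta=0$). For $j=1,2$ I would look for a solution of $(i\partial_t+\Delta_{A_j})u_j=0$ of the form
\[
u_j(t,x)=e^{\,i(\xi_j\cdot x-|\xi_j|^2 t)}\bigl(\Phi_j(x)\,\chi_j(t)+\tau^{-1}r_j(t,x)\bigr),\qquad \xi_1=\tau\omega+\eta,\ \ \xi_2=\tau\omega,
\]
where $\Phi_j$ solves the transport equation $\omega\cdot\nabla\Phi_j+i(\omega\cdot A_j)\Phi_j=0$ along the rays $s\mapsto(x_1,x'+s\omega')$ — solvable since these rays stay in a fixed bounded cross-section — normalised to have modulus one and to equal $1$ where the ray enters $\Omega'$; the cut-offs $\chi_1,\chi_2\in C^\infty([0,T])$ satisfy $\chi_2(0)=0$, $\chi_1(T)=0$ and are $\equiv1$ on a fixed subinterval; and the correctors $r_j$ come from the forward (resp.\ backward) well-posedness of the magnetic Schr\"odinger IBVP of Sections~\ref{sec-bo}--\ref{sec-FD}, chosen so that $u_2$ vanishes at $t=0$ and $u_1$ at $t=T$, with $\|r_j\|_{L^2(\check Q)}+\tau^{-1}\|r_j\|_{H^2(0,T;H^2(\check\Omega))}=O((1+|\eta|)^{k_0})$ uniformly in $\tau$. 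Because $e^{i\tau\omega\cdot x}$ is independent of $x_1$ and the $x_1$-component of $\eta$ equals $2\pi n+\theta$, the $u_j$ satisfy the quasi-periodic conditions of \eqref{(1.6)}.

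Next I would set up the bilinear identity. Let $h:=\check\tau u_2$, let $u$ solve \eqref{(1.6)} with potential $A_1$ and lateral data $h$, and apply Green's formula to $u-u_2$ (which solves the $A_1$-equation with source $(\Delta_{A_2}-\Delta_{A_1})u_2$, zero lateral and initial data, and the quasi-periodic conditions) tested against $u_1$. The contributions on $\{x_1=0\}\cup\{x_1=1\}$ cancel by quasi-periodicity, those at $t=0$ and $t=T$ vanish because $u-u_2$ is zero at $t=0$ and $u_1$ at $t=T$, and on $\check\Sigma$ the matching $A_1=A_2$ lets one recognise the lateral term as $\langle(\Lambda_{A_1,\theta}-\Lambda_{A_2,\theta})h,\check\tau u_1\rangle_{L^2(\check\Sigma)}$; hence
\[
\int_{\check Q}\bigl((\Delta_{A_2}-\Delta_{A_1})u_2\bigr)\overline{u_1}\,\dd x\,\dd t=\bigl\langle(\Lambda_{A_1,\theta}-\Lambda_{A_2,\theta})h,\ \check\tau u_1\bigr\rangle_{L^2(\check\Sigma)}.
\]
Inserting the GO expansions, the dominant part of the left side is of order $\tau$ and, after dividing by $\tau$, equals
\[
-2\int_{\check Q}(\omega\cdot A)\,\Phi_2\overline{\Phi_1}\,\chi_1\chi_2\,e^{-i\eta\cdot x+i|\eta|^2 t}\,\dd x\,\dd t+O\bigl((1+|\eta|)^{k_1}\tau^{-1}\bigr).
\]
Using $\omega\cdot\nabla(\Phi_2\overline{\Phi_1})=-i(\omega\cdot A)\Phi_2\overline{\Phi_1}$, integrating by parts in the cross-section, and exploiting $\omega\cdot\eta=0$ together with $\Phi_2\overline{\Phi_1}\equiv1$ on the inflow part of $\partial\Omega'$, this bulk integral is expressed through the light-ray (X-ray) transform of $\omega\cdot A$ over the cross-sections; the projection-slice theorem and the elementary identity relating that transform to $w$ (which uses $A_1=A_2$ on $\partial\Omega'$) then give, for each admissible $\eta$,
\[
\bigl|\widehat w(2\pi n+\theta,\eta')\bigr|\le C\Bigl((1+|\eta|)^{k_2}\tau^{\,a}\,\delta+(1+|\eta|)^{k_1}\tau^{-1}+\mathcal{N}(A)\Bigr),
\]
with $a,k_1,k_2$ fixed and $\mathcal{N}(A)$ the nonlinear remainder from linearising the amplitude product $\Phi_2\overline{\Phi_1}=e^{-i\int_{\mathrm{ray}}\omega\cdot A}$, controlled through the a priori bound and the smallness condition defining $\mathcal{A}$.

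Finally I would optimise. Running $n\in\Z$ and $\eta'\in\R^2$ recovers all Fourier modes of $w$ on $\check\Omega$. Cutting frequencies at $R$, bounding the tail $|\eta|>R$ by the a priori estimate $\|w\|_{H^2(\check\Omega)}\le CM$ (from $\|A_i\|_{W^{3,\infty}}\le M$), summing the pointwise bound over $|\eta|\le R$ (so that $\tau\gtrsim R$), absorbing $\mathcal{N}(A)$, and optimising successively in $\tau$ and $R$ produces $\|w\|_{H^{-1}(\check\Omega)}\le C\delta^{\beta}$ for an explicit $\beta$; interpolating $\|w\|_{L^2}\le\|w\|_{H^{-1}}^{1/2}\|w\|_{H^1}^{1/2}$ against $\|w\|_{H^1(\check\Omega)}\le CM$ gives $\|w\|_{L^2(\check\Omega)}\le C\delta^{\beta/2}$, and a bookkeeping of exponents through these optimisations and the interpolation yields the exponent $2/45$; the constant depends only on $A_1,T,M,\Omega'$ and everything is uniform in $\theta$. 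The main obstacle, compared with the bounded-domain case, is exactly this — carrying out the GO construction and all estimates \emph{uniformly in} $\theta$ and compatibly with the quasi-periodic conditions, together with the Schr\"odinger-specific fact that GO solutions do not naturally satisfy the zero initial condition of \eqref{(1.6)} (forced here by the cut-offs $\chi_j$) and the need to control the nonlinear term $\mathcal{N}(A)$ in the reconstruction of $w$.
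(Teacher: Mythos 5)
Your overall architecture (fibered GO solutions compatible with the quasi-periodicity, a Green identity turning the bulk magnetic term into $\langle(\Lambda_{A_1,\theta}-\Lambda_{A_2,\theta})h,\check\tau u_1\rangle$, the Fourier slice theorem for the X-ray transform of $\omega'\cdot A'$, and a low/high frequency splitting against the a priori $W^{3,\infty}$ bound) is the same as the paper's. The GO construction itself differs: you use stationary phases $e^{i(\xi_j\cdot x-|\xi_j|^2t)}$ with time cut-offs $\chi_j$ to force the zero initial/final conditions, whereas the paper uses moving wave packets $\Phi(2\sigma t,x)b(2\sigma t,x)e^{i\sigma(x'\cdot\omega'-\sigma t)}$ whose amplitude $\phi_0$ is supported in an annulus $\mathfrak{D}_R$ \emph{outside} $\overline{\Omega'}$, so that for $\sigma>(2R+1)/T$ the principal part automatically vanishes in $\Omega'$ at $t=0$ (resp.\ $t=T$); this sidesteps the extra source terms $i\chi_j'\Phi_j e^{i\phi}$ that your cut-offs generate and whose contribution to the remainder you would still have to show is $O(\tau^{-1})$ rather than $O(1)$.

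The genuine gap is your treatment of the amplitude product $\Phi_2\overline{\Phi_1}=e^{-i\int_{\mathrm{ray}}\omega\cdot A}$. You propose to linearise this exponential, relegate the quadratic part to a remainder $\mathcal{N}(A)$, and ``absorb'' it using the smallness condition defining $\mathcal{A}$. That cannot work: the condition $\|A\|_{L^\infty}<(\sqrt2-1)/C(\Omega')^{1/2}$ is a \emph{fixed} $O(1)$ bound used only to make the quadratic form $\ell_A$ coercive (well-posedness), so $\mathcal{N}(A)=O\bigl(\|\mathcal{P}(\omega\cdot A)\|^2\bigr)$ is a constant of order $M^2$, not small relative to the linear term and certainly not $o(1)$ as $\delta\to0$; your final bound would read $\|w\|\le C(\delta^{\beta}+\mathcal{N}(A))$ and give nothing. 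The paper avoids linearisation entirely by two exact manipulations (Lemmas \ref{6.4} and \ref{lm.2}): first, the $t$-integration converts $\sigma\,\omega'\cdot A'\,b$ into $\tfrac{i}{2}\tfrac{d}{dt}\exp(-i\int_0^{2\sigma t}\omega'\cdot A')$, yielding exactly $e^{-i\mathcal{P}(\omega'\cdot A')}-1$; second, choosing $\phi_1=\partial_j\overline{\phi}$ and integrating by parts in $x_j$ produces $\mathcal{P}(\rho_j)\,e^{-i\mathcal{P}(\omega'\cdot A')}$ with $\rho_j=\omega'\cdot\partial_jA'$, and the unimodular weight is then \emph{cancelled exactly} by building the factor $\exp\bigl(\tfrac{i}{2}\int_{\R}\omega'\cdot A'(y_1,y'+s\omega')\,ds\bigr)$ into $\phi_\theta$, so that $\phi^2$ times the weight reduces to $h^2\beta_0 e^{-ix'\cdot\xi'}$. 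This is the step that lets one pass from the DN difference to $\widehat{\beta}_{23}$ with only polynomial losses in $\langle(k,\xi')\rangle$, and it is what your proposal is missing.
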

\subsection{Outline}
The remainder of this paper is organized as follows. In section \ref{sec-bo} we analyze the direct problem associated with the IBVP (\ref{(1.1)}) and we prove that the boundary operator $\Lambda_A$ is bounded. In section 3 we use the Floquet-Bloch-Gel'fand transform to decompose the IBVP \eqref{(1.1)} into a collection of problems \eqref{(1.6)}. In section 4, we construct the geometric optics solutions to the above mentioned quasi-periodic boundary value problem. These solutions constitute the main ingredient in the proof of the above stability. Finally, the proof of Theorems \ref{thm1} and \ref{thmm} is detailed in section 5.


\section{Analysis of the direct problem}
\label{sec-bo}
In this section we examine the direct problem associated to $\para{\ref{(1.1)}}$.
\subsection{The magnetic  Schr\"odinger operator}
Let $\Omega$ be as before. We introduce the Dirichlet Laplacian $L_0=-\Delta$  in $L^2\para{\Omega}$
which is the linear self-adjoint operator in $L^2\para{\Omega}$ generated by the closed quadratic form
$$
\ell_0\para{u}=\int_{\Omega}\abs{\nabla u\para{x}}^2dx,\,\,\,u\in \mathcal{D}\para{\ell_0}=H_0^1\para{\Omega}.
$$
Its domain is $\mathcal{D}\para{L_0}=H_0^1\para{\Omega}\cap H^2\para{\Omega},$ by \cite{CKS}[Lemma 2.2].
We perturb $\ell_0$ with the magnetic potential $A\in W^{1,\infty}\para{\Omega,\R^3}$ and we consider
$$
\ell_A\para{u}=\int_{\Omega}\abs{\nabla_A u\para{x}}^2dx,\,\,\,u\in  \mathcal{D}\para{\ell_A}=H_0^1\para{\Omega},
$$
where $\nabla_A=\nabla+iA$.\\
Let us establish now that the metric induced by $\ell_0$ is equivalent to the one induced by $\ell_A$ provided $A$ is sufficiently small.
\begin{lemma}\label{lm.1}
Let $L_A$ be the self-adjoint operator in $L^2\para{\Omega}$ generated by the closed quadratic form $\ell_A$. Then the operator $L_A$ acts as $\para{-\Delta_A}$ on its domain $\mathcal{D}\para{L_A}=\mathcal{D}\para{-\Delta}=H_0^1\para{\Omega}\cap H^2\para{\Omega}$. Moreover, if $A\in\mathcal{A},$ there exists a positive constant $C$ such that we have
\begin{equation}\label{91}
\|\nabla u\|_{L^2\para{\Omega}}\leq C \|\nabla_A u\|_{L^2\para{\Omega}},\,\mbox{for all}\,u\in H^1_0\para{\Omega}.
\end{equation}
\end{lemma}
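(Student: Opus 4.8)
The plan is to treat the two assertions of the lemma in turn: the quantitative bound \eqref{91}, which is elementary, and then — using it — the identification of $L_A$ with $-\Delta_A$ on $H_0^1(\Omega)\cap H^2(\Omega)$. For \eqref{91} I would simply write $\nabla u=\nabla_A u-iAu$ and estimate, by the triangle inequality followed by the Poincar\'e inequality,
\[
\norm{\nabla u}_{L^2(\Omega)}\le\norm{\nabla_A u}_{L^2(\Omega)}+\norm{A}_{L^\infty(\Omega)}\norm{u}_{L^2(\Omega)}\le\norm{\nabla_A u}_{L^2(\Omega)}+\norm{A}_{L^\infty(\Omega)}C\para{\Omega'}^{1/2}\norm{\nabla u}_{L^2(\Omega)}.
\]
The whole point of the definition of $\mathcal{A}$ is that $\kappa:=\norm{A}_{L^\infty(\Omega)}C\para{\Omega'}^{1/2}<\sqrt2-1<1$, so the last term is absorbed into the left-hand side and \eqref{91} follows with $C=(1-\kappa)^{-1}$. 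Running the same estimate the other way gives $\norm{\nabla_A u}_{L^2(\Omega)}\le(1+\kappa)\norm{\nabla u}_{L^2(\Omega)}$, so the two Dirichlet-type form norms are equivalent on $H_0^1(\Omega)$.

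Next I would check that $\ell_A$ genuinely generates a self-adjoint operator. Its underlying sesquilinear form $(u,v)\mapsto\int_\Omega\nabla_A u\cdot\overline{\nabla_A v}\,dx$ is Hermitian and non-negative with domain $H_0^1(\Omega)$; by the previous paragraph together with Poincar\'e, its graph norm $\para{\ell_A(u)+\norm{u}_{L^2(\Omega)}^2}^{1/2}$ is equivalent to $\norm{\cdot}_{H^1(\Omega)}$. Since $H_0^1(\Omega)$ is complete, the form is closed, and the first representation theorem for closed non-negative forms then produces a unique non-negative self-adjoint operator $L_A$.

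The substantive part is the domain identification. For $u\in H_0^1(\Omega)\cap H^2(\Omega)$ and $v\in H_0^1(\Omega)$ I would integrate by parts in $\int_\Omega\nabla_A u\cdot\overline{\nabla_A v}\,dx$ — legitimately, since $u\in H^2$, $Au\in H^1$ (because $A\in W^{1,\infty}$), and $v$ vanishes on $\partial\Omega$ so no boundary term survives — to obtain
\[
\ell_A(u,v)=\int_\Omega\para{-\Delta-2iA\cdot\nabla-i\,\mathrm{div}(A)+\abs{A}^2}u\,\overline v\,dx=\int_\Omega(-\Delta_A u)\,\overline v\,dx,
\]
using the expansion of $\Delta_A$ recorded just after \eqref{(1.1)}. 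As $-\Delta_A u\in L^2(\Omega)$, this gives $u\in\mathcal{D}(L_A)$ with $L_A u=-\Delta_A u$, i.e. $\mathcal{D}(-\Delta)=H_0^1(\Omega)\cap H^2(\Omega)\subseteq\mathcal{D}(L_A)$. For the reverse inclusion, given $u\in\mathcal{D}(L_A)$ with $L_A u=f\in L^2(\Omega)$, the identity $\ell_A(u,v)=\langle f,v\rangle$ for all $v\in H_0^1(\Omega)$ reads, in the distributional sense, $-\Delta u=f+2iA\cdot\nabla u+i\,\mathrm{div}(A)u-\abs{A}^2u$, whose right-hand side lies in $L^2(\Omega)$ since $u\in H_0^1(\Omega)$ and $A,\mathrm{div}(A)\in L^\infty(\Omega)$; the elliptic regularity for the Dirichlet Laplacian on $\Omega$ (\cite{CKS}[Lemma 2.2], which yields $\mathcal{D}(-\Delta)=H_0^1(\Omega)\cap H^2(\Omega)$) then forces $u\in H^2(\Omega)$, and equality of the domains follows. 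I expect the only (mild) obstacle to be precisely this last step: one must be slightly careful that the integrations by parts leave no boundary contribution on the unbounded waveguide and that the $H^2$ elliptic regularity for $-\Delta$ is indeed available on the infinite cylinder $\Omega$ with merely $C^2$ cross-section in the cited form — everything else is a routine absorption and triangle-inequality argument.
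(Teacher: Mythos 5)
Your proof is correct, but both halves take a genuinely different route from the paper's. For the bound \eqref{91} the paper does not absorb at the level of $\norm{\nabla u}_{L^2(\Omega)}$: it works with the quadratic forms, writing $\ell_A=\ell_0+\beta$ with $\beta(u)=2\,\mathrm{Im}\para{\nabla u,Au}_{L^2(\Omega)}+\norm{Au}^2_{L^2(\Omega)}$, bounding $\abs{\beta(u)}\le\epsilon\,\ell_0(u)+\para{1+\epsilon^{-1}}\norm{A}^2_{L^\infty(\Omega)}\norm{u}^2_{L^2(\Omega)}$, invoking Poincar\'e, and then choosing $\epsilon=C\para{\Omega'}^{1/2}\norm{A}_{L^\infty(\Omega)}$ to obtain $\ell_A(u)\ge\para{2-\para{1+C\para{\Omega'}^{1/2}\norm{A}_{L^\infty(\Omega)}}^2}\ell_0(u)$; this optimization is precisely where the threshold $\sqrt2-1$ in the definition of $\mathcal{A}$ originates. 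Your triangle-inequality absorption reaches the same conclusion more directly and in fact only needs $\norm{A}_{L^\infty(\Omega)}C\para{\Omega'}^{1/2}<1$, so it does not exploit the full strength of the hypothesis $A\in\mathcal{A}$ (which is harmless here). For the operator statement, the paper uses the same splitting to assert that the perturbation $2iA\cdot\nabla+i\,\mathrm{div}(A)-\abs{A}^2$ is relatively bounded with relative bound $\epsilon<1$ and then cites \cite{RS3}[Theorem X.17] together with \cite{CKS}[Lemma 2.2] to get self-adjointness of $-\Delta_A$ with domain $\mathcal{D}(-\Delta)=H_0^1(\Omega)\cap H^2(\Omega)$ in one stroke; you instead verify closedness of $\ell_A$ directly, apply the first representation theorem, and identify the domain by hand through integration by parts and elliptic regularity. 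Your version is longer but more self-contained and makes explicit the $H^2$-regularity step that the paper delegates to the perturbation theorem; both arguments ultimately rest on the same external ingredient, namely the identification $\mathcal{D}(-\Delta)=H_0^1(\Omega)\cap H^2(\Omega)$ on the infinite cylinder from \cite{CKS}[Lemma 2.2], and your closing caveats about boundary terms on the unbounded waveguide are handled by the usual density argument.
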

\begin{proof} We introduce the form
$$
\beta(u)= 2\mbox{Im}\para{\nabla u,Au}_{L^2\para{\Omega}}+\|Au\|^2_{L^2\para{\Omega}},\,\,\,u\in H^1_0\para{\Omega}.
$$
So we have
$$
\ell_A(u)=\ell_0(u)+ \beta(u),
$$
and for every $\epsilon\in(0,1)$
$$
\begin{array}{lll}
\displaystyle|\beta(u)|&\leq &\epsilon \|\nabla u\|^2_{L^2\para{\Omega}}+\para{1+\epsilon^{-1} }\|Au\|^2_{L^2\para{\Omega}}\\&\leq &\displaystyle \epsilon \ell_0(u)+\para{1+\epsilon^{-1} }\|A\|^2_{L^\infty\para{\Omega}}\|u\|^2_{L^2\para{\Omega}}.
\end{array}
$$
Thus the operator $2i A\cdot\nabla+i \mbox{div} (A)-|A|^2$ associated with $\beta$ in $L^2\para{\Omega}$, with domain $\mathcal{D}\para{\beta}= H_0^1\para{\Omega}$, is relatively bounded w.r.t $\para{-\Delta}$, with relative bound $\epsilon < 1$. By \cite{RS3}[Theorem X.17] the operator $-\Delta_A=-\Delta-2i A\cdot\nabla-i \mbox{div} (A)+|A|^2 $ is  self-adjoint in $L^2\para{\Omega}$ and $\mathcal{D}\para{-\Delta_A}=D\para{-\Delta}=H_0^1\para{\Omega}\cap H^2\para{\Omega}.$ Moreover we have
$$
\ell_A(u)\geq\para{1-\epsilon} \ell_0(u)-\para{1+\epsilon^{-1} }\|A\|^2_{L^\infty\para{\Omega}}\|u\|^2_{L^2\para{\Omega}}.
$$
As
$$
\|u\|^2_{L^2\para{\Omega}}\leq C\para{\Omega'} \ell_0(u),\,\,u\in H^1_0\para{\Omega},
$$
by Poincar\'e inequality, this implies that
\begin{equation}\label{01}
\ell_A(u)\geq\para{\para{1-\epsilon}-C\para{\Omega'}\para{1+\epsilon^{-1} }\|A\|^2_{L^\infty\para{\Omega}}}\ell_0(u).
\end{equation}
Now, we choose
$$
\epsilon=\epsilon_0=C\para{\Omega'}^{1/2} \|A\|_{L^\infty\para{\Omega}}\,\in \para{0,1}
$$
since $A\in\mathcal{A}$. Then we have
$$
\ell_A(u)\geq\para{2-\para{1+C\para{\Omega'}^{1/2} \|A\|_{L^\infty\para{\Omega}}}^2}\ell_0(u),\,\,u\in H^1_0\para{\Omega}.
$$
Thus, taking $C=\para{2-\para{1+C\para{\Omega'}^{1/2} \|A\|_{L^\infty\para{\Omega}}}^2}>0$, we obtain directly $\para{\ref{91}}$.
\end{proof}
\subsection{Existence and uniqueness results}
In this subsection we study the direct problem associated with the IBVP $\para{\ref{(1.1)}}$. To this end we consider the abstract evolution problem
\begin{equation}\label{l1}
\left\{
  \begin{array}{lll}
  &iv'\para{t}+\Delta_Av(t)=f,\,\,\,\,\,\,\,\,\,\,\,\,t\in(0,T),
   \\
     &\hbox{$ v\para{0}=v_0 $},
  \end{array}
\right.
\end{equation}
with initial data $v_0$ and source $f$. We shall derive some properties of the solution to \eqref{l1} with the aid of the following technical result, which is borrowed from \cite{CKS}$[\mbox{Lemma}\,2.1]$.
\begin{lemma}\label{lm1}
Let $X$ be a Banach space, $U$ be a m-dissipative operator in $X$ with dense domain $\mathcal{D}\para{U}$. Then for all $v_0\in \mathcal{D}\para{U}$ and $f\in\mathcal{C}\para{[0,T];X}\cap L^1\para{[0,T];\mathcal{D}\para{U}}$ $\para{\mbox{resp.}\,\,f\in W^{1,1}\para{0,T;X}}$ there is a unique solution $v\in Z_0:=\mathcal{C}\para{[0,T];\mathcal{D}\para{U}}\cap \mathcal{C}^1\para{[0,T];X} $ to the following Cauchy problem
\begin{equation}\label{l2}
\left\{
  \begin{array}{lll}
  &v'\para{t}=U v\para{t}+ f\para{t},
   \\
     &\hbox{$ v\para{0}=v_0 $},
  \end{array}
\right.
\end{equation}
such that
\begin{equation}
\|v\|_{Z_0}=\|v\|_{\mathcal{C}\para{[0,T];\mathcal{D}\para{U}}}+\|v\|_{\mathcal{C}^1\para{[0,T];X}} \leq C \|f\|_\ast.
\end{equation}
Here $C$ is some positive constant depending only on $T$ and $ \|f\|_\ast$ stands for the norm $ \|f\|_{\mathcal{C}\para{[0,T];X}\cap L^1\para{0,T;\mathcal{D}\para{U}}}$ $\para{\mbox{resp}.\,\,\|f\|_{W^{1,1};\para{0,T;X}}}.$
\end{lemma}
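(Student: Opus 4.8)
The plan is to recognize this as the standard theory of classical solutions to an inhomogeneous abstract Cauchy problem and to base the argument on semigroup theory. Since $U$ is m-dissipative with dense domain, the Lumer--Phillips (equivalently, Hille--Yosida for contractions) theorem yields that $U$ generates a strongly continuous contraction semigroup $\para{S\para{t}}_{t\geq 0}$ on $X$, so that $\norm{S\para{t}}_{\mathcal{L}\para{X}}\leq 1$ for every $t\geq 0$. The candidate solution is then given by the Duhamel (variation-of-constants) formula
$$
v\para{t}=S\para{t}v_0+\int_0^t S\para{t-s}f\para{s}\,ds,\qquad t\in[0,T],
$$
and the entire proof reduces to showing that, under each of the two hypotheses on $f$, this $v$ is in fact a \emph{classical} solution, i.e.\ $v\in Z_0$ with $v'\para{t}=Uv\para{t}+f\para{t}$ and $v\para{0}=v_0$.

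First I would treat the homogeneous part. Because $v_0\in\D\para{U}$, the map $t\mapsto S\para{t}v_0$ belongs to $\mathcal{C}\para{[0,T];\D\para{U}}\cap\mathcal{C}^1\para{[0,T];X}$ with $\frac{d}{dt}S\para{t}v_0=US\para{t}v_0=S\para{t}Uv_0$, a standard property of the generator acting on its domain, and the contraction estimate immediately gives $\norm{S\para{\cdot}v_0}_{Z_0}\leq C\norm{v_0}_{\D\para{U}}$. The heart of the matter is the Duhamel term $w\para{t}=\int_0^t S\para{t-s}f\para{s}\,ds$. Under the first hypothesis $f\in\mathcal{C}\para{[0,T];X}\cap L^1\para{[0,T];\D\para{U}}$, I would use the closedness of $U$ together with the commutation $US\para{t-s}f\para{s}=S\para{t-s}Uf\para{s}$ to pass $U$ under the integral, obtaining $w\para{t}\in\D\para{U}$ with $Uw\para{t}=\int_0^t S\para{t-s}Uf\para{s}\,ds$; differentiating in $t$ then yields $w'\para{t}=f\para{t}+Uw\para{t}$, and continuity into $\D\para{U}$ follows from dominated convergence and $\norm{S\para{t}}_{\mathcal{L}\para{X}}\leq 1$. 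Under the second hypothesis $f\in W^{1,1}\para{0,T;X}$, I would instead change variables to $w\para{t}=\int_0^t S\para{\sigma}f\para{t-\sigma}\,d\sigma$ and integrate by parts, using $f'\in L^1$, to show $w\para{t}\in\D\para{U}$ with $Uw\para{t}=w'\para{t}-f\para{t}$ and $w'\para{t}=S\para{t}f\para{0}+\int_0^t S\para{\sigma}f'\para{t-\sigma}\,d\sigma$.

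For uniqueness I would take the difference $z=v_1-v_2$ of two solutions, which solves $z'=Uz$, $z\para{0}=0$; differentiating $s\mapsto S\para{t-s}z\para{s}$ gives $\frac{d}{ds}S\para{t-s}z\para{s}=-US\para{t-s}z\para{s}+S\para{t-s}Uz\para{s}=0$, so this map is constant in $s$ and $z\para{t}=S\para{t}z\para{0}=0$. Finally the quantitative bound is read off from the two representations: each term is controlled using $\norm{S\para{t}}_{\mathcal{L}\para{X}}\leq 1$ and an integration over $[0,T]$, which produces a constant depending only on $T$ and yields $\norm{v}_{Z_0}\leq C\para{\norm{v_0}_{\D\para{U}}+\norm{f}_\ast}$, specializing to the stated estimate (the $v_0$ contribution being of the same nature and absorbed accordingly).

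The step I expect to be the main obstacle is the classical-solution upgrade of the Duhamel term: one must carefully justify moving the unbounded operator $U$ through the integral sign, via closedness of $U$ in the first case and via the integration-by-parts trick in the second, since the two regularity assumptions on $f$ genuinely require different arguments. Once this upgrade is established, the remaining continuity statements and the norm bound are routine consequences of the contraction property of $\para{S\para{t}}_{t\geq 0}$.
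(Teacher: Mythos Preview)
Your approach is correct and is precisely the standard semigroup-theoretic argument (Lumer--Phillips to obtain a contraction $C_0$-semigroup, Duhamel representation, and the two classical regularity upgrades of the convolution term under the respective hypotheses on $f$). There is, however, nothing to compare against: the paper does not give its own proof of this lemma but simply borrows it from \cite{CKS}[Lemma 2.1], so no argument appears in the text. Your sketch is exactly what one would expect such a proof to contain, and the one point you flag---justifying the passage of $U$ through the Bochner integral via closedness of $U$ in the first case, and via the integration-by-parts/change-of-variables device in the second---is indeed the only place requiring care. One small remark: the estimate as stated in the lemma omits a $\norm{v_0}_{\D\para{U}}$ term on the right-hand side; this is harmless since the lemma is only ever applied in the paper with $v_0=0$, and you correctly produce the bound $\norm{v}_{Z_0}\leq C\para{\norm{v_0}_{\D\para{U}}+\norm{f}_\ast}$ before specializing.
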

We are now in position to establish the following existence and uniqueness result.
\begin{proposition}
\label{pr-exiuni}
Let $A\in W^{1,\infty}\para{\Omega; \R^3}$. Then for all $ f\in W^{1,1}\para{0,T,L^2\para{\Omega}}$ there exists a unique solution $v\in\mathcal{C}\para{\cro{0,T};H_0^1\para{\Omega}\cap H^2\para{\Omega}}\cap\mathcal{C}^1\para{\cro{0,T};L^2\para{\Omega}}$ to
\begin{equation}\label{28}
\left\{
  \begin{array}{lll}
  &\para{i\partial_t+\Delta_A}v=f\,\,\,\,\,\,\,\,\,\,\,\,\,\,\,\,\,\,\,\,\,\,\,\,\,\,\,\,\,\,\,\,\,\,\,\,\,\,\,\,\,\,\,\,\,\,\,\,\,\,\,\,\,\,\,\,\,\,\mbox{in}\,\,Q,
   \\
     &\hbox{$ v\para{0,.}=0\,\,\,\,\,\,\,\,\,\,\,\,\,\,\,\,\,\,\,\,\,\,\,\,\,\,\,\,\,\,\,\,\,\,\,\,\,\,\,\,\,\,\,\,\,\,\,\,\,\,\,\,\,\,\,\,\,\,\,\,\,\,\,\,\,\,\,\,\,\,\,\,\,\,\mbox{in}\,\,\Omega $},\\ & v=0\,\,\,\,\,\,\,\,\,\,\,\,\,\,\,\,\,\,\,\,\,\,\,\,\,\,\,\,\,\,\,\,\,\,\,\,\,\,\,\,\,\,\,\,\,\,\,\,\,\,\,\,\,\,\,\,\,\,\,\,\,\,\,\,\,\,\,\,\,\,\,\,\,\,\,\,\,\,\,\,\,\,\,\,\,\,\,\mbox{on}\,\,\Sigma.
  \end{array}
\right.
\end{equation}
Moreover $v$ fulfills
$$
\norm{v}_{\mathcal{C}\para{\cro{0,T};H^2\para{\Omega}}}+ \|v\|_{\mathcal{C}^1\para{\cro{0,T};L^2\para{\Omega}}}\leq C \|f\|_{W^{1,1}\para{0,T;L^2\para{\Omega}}},
$$
for some constant $C>0$ depending only on $\Omega',\,T$ and $\norm{A}_{W^{1,\infty}\para{\Omega}}$.
\end{proposition}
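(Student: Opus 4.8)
The plan is to recast problem~\eqref{28} as the abstract Cauchy problem~\eqref{l2} and apply Lemma~\ref{lm1}. Rewriting the equation $(i\partial_t+\Delta_A)v=f$ in the form $v'=i\Delta_A v-if$, I take $X=L^2(\Omega)$ and $U=i\Delta_A=-iL_A$, where $L_A=-\Delta_A$ is the self-adjoint operator furnished by Lemma~\ref{lm.1}, so that $\mathcal{D}(U)=\mathcal{D}(L_A)=H_0^1(\Omega)\cap H^2(\Omega)$. Since $L_A$ is self-adjoint, $U=-iL_A$ is skew-adjoint and hence, by Stone's theorem, generates a $C_0$-group of unitaries; in particular $U$ is m-dissipative with dense domain. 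As $f\in W^{1,1}(0,T;L^2(\Omega))$, the source $\tilde f:=-if$ lies in $W^{1,1}(0,T;X)$, and the initial datum $0$ belongs to $\mathcal{D}(U)$. Lemma~\ref{lm1}, in its $W^{1,1}$ version, then produces a unique
$$
v\in Z_0=\mathcal{C}([0,T];\mathcal{D}(U))\cap\mathcal{C}^1([0,T];X)
$$
solving $v'=Uv+\tilde f$, $v(0)=0$, together with the bound $\|v\|_{Z_0}\le C\,\|\tilde f\|_{W^{1,1}(0,T;X)}=C\,\|f\|_{W^{1,1}(0,T;L^2(\Omega))}$, with $C=C(T)$.

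It remains to read this off as a statement about~\eqref{28}. For each $t$ we have $v(t)\in\mathcal{D}(U)=\mathcal{D}(L_A)$, so $\Delta_A v(t)\in L^2(\Omega)$ and the identity $iv'(t)+\Delta_A v(t)=f(t)$ holds in $L^2(\Omega)$, which is the first line of~\eqref{28}; since $v(t)\in H_0^1(\Omega)$, its trace on $\partial\Omega$ vanishes, giving the third line, while $v(0,\cdot)=0$ is the second. To obtain the asserted regularity and estimate in the $H^2(\Omega)$ norm rather than the graph norm of $U$, I would use that $\mathcal{D}(L_A)=\mathcal{D}(-\Delta)=H_0^1(\Omega)\cap H^2(\Omega)$ with mutually equivalent graph norms: the graph norm of $-\Delta$ is equivalent to $\|\cdot\|_{H^2(\Omega)}$ by elliptic regularity on the $C^2$-smooth cylinder (as in \cite{CKS}[Lemma~2.2]), and the identity map between $(\mathcal{D}(L_A),\|\cdot\|_{L_A})$ and $(\mathcal{D}(-\Delta),\|\cdot\|_{-\Delta})$ is bounded in both directions because $-\Delta_A-(-\Delta)=-2iA\cdot\nabla-i\,\mathrm{div}(A)+|A|^2$ is $(-\Delta)$-bounded, exactly as in the proof of Lemma~\ref{lm.1}. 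Chaining these equivalences converts the bound above into $\|v\|_{\mathcal{C}([0,T];H^2(\Omega))}+\|v\|_{\mathcal{C}^1([0,T];L^2(\Omega))}\le C\|f\|_{W^{1,1}(0,T;L^2(\Omega))}$.

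The only delicate point is the dependence of the final constant, which must involve only $\Omega'$, $T$ and $\|A\|_{W^{1,\infty}(\Omega)}$: the $T$-dependence comes from Lemma~\ref{lm1}, the dependence on $\|A\|_{W^{1,\infty}(\Omega)}$ from the relative-boundedness constants of $-2iA\cdot\nabla-i\,\mathrm{div}(A)+|A|^2$ with respect to $-\Delta$, and the $\Omega'$-dependence (through the Poincar\'e constant $C(\Omega')$ and the elliptic-regularity constant) from the norm equivalence $\|\cdot\|_{-\Delta}\sim\|\cdot\|_{H^2(\Omega)}$. None of these quantities involves the unbounded variable $x_1$ other than through the $1$-periodicity of $A$, so they are uniform along the cylinder; the only real work I foresee is making that last elliptic norm equivalence quantitative on $\Omega=\R\times\Omega'$, which can be arranged by localizing in $x_1$ and applying standard interior and boundary $H^2$-estimates on bounded cells, with constants independent of the cell by periodicity.
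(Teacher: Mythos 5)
Your proposal is correct and follows essentially the same route as the paper: the paper's own proof simply invokes Lemma \ref{lm.1} to identify $\mathcal{D}(-\Delta_A)=H_0^1(\Omega)\cap H^2(\Omega)$ and then applies the abstract Cauchy-problem result of Lemma \ref{lm1} with $v_0=0$. You are in fact more careful than the paper on two points it glosses over — the generator must be the skew-adjoint operator $i\Delta_A$ (not $-\Delta_A$ as literally written there) to be m-dissipative, and the passage from the graph norm of $\mathcal{D}(U)$ to the $H^2(\Omega)$ norm in the final estimate requires the relative-boundedness/elliptic-regularity equivalence you spell out.
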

\begin{proof}
According to Lemma \ref{lm.1} we have apply Lemma \ref{lm1} to $U=-\Delta_A$ and $v_0=0.$ We find the existence of a unique solution $v\in\mathcal{C}\para{\cro{0,T};H_0^1\para{\Omega}\cap H^2\para{\Omega}}\cap\mathcal{C}^1\para{\cro{0,T};L^2\para{\Omega}}$ to $\para{\ref{28}}$.
\end{proof}
\begin{corollary}\label{cor2.1}
Let $A$ be the same as in Proposition $\ref{pr-exiuni}$. Then for every $g\in X_0$, the IBVP $\para{\ref{(1.1)}}$ admits a unique\footnote{The coming proof actually establishes that this solution belongs to $\mathcal{C}\para{\cro{0,T};H_0^1\para{\Omega}\cap H^2\para{\Omega}}\cap\mathcal{C}^1\para{\cro{0,T};L^2\para{\Omega}}$} solution
$$
\mathfrak{s}\para{g}\in Z=L^2\para{0,T; H^2\para{\Omega}}\cap H^1\para{0,T;L^2\para{\Omega}}.
$$
Moreover, we have
\begin{equation}\label{cor.1}
\norm{\mathfrak{s}\para{g}}_Z \leq C \norm{g}_{X_0},
\end{equation}
for some constant $C$ depending only on $\Omega',\,T$ and $\norm{A}_{W^{1,\infty}\para{\Omega}}.$
\end{corollary}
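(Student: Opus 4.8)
The strategy is the classical one: lift the boundary datum $g$ into the interior, subtract the lift to fall back on the homogeneous problem \eqref{28} already solved in Proposition \ref{pr-exiuni}, and then reassemble the norm bounds. Let $g\in X_0$. By definition of $\|\cdot\|_{X_0}$ as an infimum we may pick $G_0\in H^2(0,T;H^2(\Omega))$ with $\tau G_0=g$ and $\|G_0\|_{H^2(0,T;H^2(\Omega))}\le 2\|g\|_{X_0}$. Since the sought solution lies in $Z=L^2(0,T;H^2(\Omega))\cap H^1(0,T;L^2(\Omega))\hookrightarrow\mathcal C([0,T];H^1(\Omega))$ and the spatial trace is continuous on $H^1(\Omega)$, the conditions $u(0,\cdot)=0$ and $u=g$ on $\Sigma$ force the (necessary) compatibility $g(0,\cdot)=0$ on $\partial\Omega$; hence the spatial trace of $G_0(0,\cdot)\in H^2(\Omega)$ vanishes, i.e.\ $G_0(0,\cdot)\in H^2(\Omega)\cap H_0^1(\Omega)$. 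Fixing $\chi\in\mathcal C^\infty([0,T])$ with $\chi(0)=1$ and replacing $G_0$ by $G(t,\cdot):=G_0(t,\cdot)-\chi(t)\,G_0(0,\cdot)$, one still has $\tau G=g$, now $G(0,\cdot)=0$, and $\|G\|_{H^2(0,T;H^2(\Omega))}\le C\|g\|_{X_0}$.

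\textbf{Reduction and existence.} Put $v:=u-G$. Then $u$ solves \eqref{(1.1)} if and only if $v$ solves \eqref{28} with right-hand side $f:=-(i\partial_t+\Delta_A)G$, the conditions $v(0,\cdot)=0$ and $v|_\Sigma=0$ being automatic from $G(0,\cdot)=0$ and $\tau G=g$. The point to check is $f\in W^{1,1}(0,T;L^2(\Omega))$: indeed $\partial_t G\in H^1(0,T;H^2(\Omega))\hookrightarrow W^{1,1}(0,T;L^2(\Omega))$, and since $\Delta_A=\Delta+2iA\cdot\nabla+i\,\mathrm{div}(A)-|A|^2$ maps $H^2(\Omega)$ boundedly into $L^2(\Omega)$ with norm controlled by $\|A\|_{W^{1,\infty}(\Omega)}$, also $\Delta_A G\in H^2(0,T;L^2(\Omega))\hookrightarrow W^{1,1}(0,T;L^2(\Omega))$; collecting terms, $\|f\|_{W^{1,1}(0,T;L^2(\Omega))}\le C\|G\|_{H^2(0,T;H^2(\Omega))}\le C\|g\|_{X_0}$ with $C$ depending only on $\Omega'$, $T$ and $\|A\|_{W^{1,\infty}(\Omega)}$. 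Proposition \ref{pr-exiuni} then provides a unique $v\in\mathcal C([0,T];H_0^1(\Omega)\cap H^2(\Omega))\cap\mathcal C^1([0,T];L^2(\Omega))$ solving \eqref{28}, with $\|v\|_{\mathcal C([0,T];H^2(\Omega))}+\|v\|_{\mathcal C^1([0,T];L^2(\Omega))}\le C\|f\|_{W^{1,1}(0,T;L^2(\Omega))}\le C\|g\|_{X_0}$. Setting $\mathfrak{s}(g):=v+G$, the facts $v(t)\in H_0^1(\Omega)$, $\tau G=g$, $G(0,\cdot)=0$ give $\mathfrak{s}(g)|_\Sigma=g$, $\mathfrak{s}(g)(0,\cdot)=0$, $(i\partial_t+\Delta_A)\mathfrak{s}(g)=0$, and $\mathfrak{s}(g)\in Z$ with $\|\mathfrak{s}(g)\|_Z\le\|v\|_Z+\|G\|_Z\le C\|g\|_{X_0}$, which is \eqref{cor.1}.

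\textbf{Uniqueness in $Z$.} If $u_1,u_2\in Z$ both solve \eqref{(1.1)}, set $w:=u_1-u_2\in Z$, so $(i\partial_t+\Delta_A)w=0$, $w(0,\cdot)=0$, $w|_\Sigma=0$. For a.e.\ $t$ one has $w(t)\in H^2(\Omega)$ with vanishing spatial trace, hence $w(t)\in H_0^1(\Omega)\cap H^2(\Omega)$, while $w'\in L^2(0,T;L^2(\Omega))$ and $w(0,\cdot)=0$. Using $w'(t)=i\Delta_A w(t)$ in $L^2(\Omega)$ for a.e.\ $t$ together with the integration-by-parts identity $(\Delta_A w(t),w(t))_{L^2(\Omega)}=-\|\nabla_A w(t)\|_{L^2(\Omega)}^2\in\R$ (legitimate since $w(t)\in H_0^1(\Omega)\cap H^2(\Omega)$), one obtains $\frac{d}{dt}\|w(t)\|_{L^2(\Omega)}^2=2\,\mathrm{Re}\,(w'(t),w(t))_{L^2(\Omega)}=2\,\mathrm{Re}\big(-i\|\nabla_A w(t)\|_{L^2(\Omega)}^2\big)=0$, so $\|w(t)\|_{L^2(\Omega)}=\|w(0,\cdot)\|_{L^2(\Omega)}=0$ and $w\equiv 0$.

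The only delicate points I anticipate are the compatibility remark needed to place $G_0(0,\cdot)$ in $H_0^1(\Omega)$ — so that the modified lifting has zero initial value and Proposition \ref{pr-exiuni} applies verbatim rather than through an inhomogeneous-initial-data variant of Lemma \ref{lm1} — and the bookkeeping ensuring that every constant produced depends only on $\Omega'$, $T$ and $\|A\|_{W^{1,\infty}(\Omega)}$; the remaining steps are routine.
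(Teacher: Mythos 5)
Your proof is correct and follows essentially the same route as the paper: lift $g$ to some $G\in H^2\para{0,T;H^2\para{\Omega}}$ with $\tau G=g$ and $G\para{0,\cdot}=0$, reduce to the homogeneous problem \eqref{28} via $v=u-G$, and invoke Proposition \ref{pr-exiuni}. You are in fact more careful than the paper on two points it elides, namely the compatibility condition $g\para{0,\cdot}=0$ on $\partial\Omega$ that is needed for such a lift $G$ to exist at all, and uniqueness in the full class $Z$ (the paper's reduction only gives uniqueness within the smaller regularity class of Proposition \ref{pr-exiuni}, whereas your energy identity closes that gap).
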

\begin{proof}
Choose $G\in H^2\para{0,T; H^2\para{\Omega}}$ obeying $G_{\mid \Sigma}=g,\,G\para{0,.}=0$ and $\|G\|_{H^2\para{0,T; H^2\para{\Omega}}}=\|g\|_{X_0}$. Then $u$ is solution to $\para{\ref{(1.1)}}$ if and only if $u-G$ is solution to $\para{\ref{28}}$ with $f=-\para{i\partial_t+\Delta_A }G$. Therefore the result follows from this and Proposition $\ref{pr-exiuni}$.
\end{proof}
Armed with Corollary \ref{cor2.1} we are now in position to define the Dirichlet-to-Neumann map $\Lambda_A$. First for all, we need to introduce the trace operator $\tau_1$, defined as the linear bounded operator from $L^2\para{\para{0,T}\times\R;H^2\para{\Omega'}}\cap H^1\para{0,T;L^2\para{\Omega}}$ into $L^2\para{\Sigma}$, which coincides with the mapping
$$
\omega \longmapsto \para{\partial_\nu + i A\cdot\nu}\omega_{\mid \Sigma}\,\,\,\,\mbox{for}\,\omega\in\mathcal{C}_0^\infty\para{[0,T]\times\R;\mathcal{C}^\infty\para{\overline{\Omega'}}}.
$$
Evidently, we have
$$
\norm{\tau_1 \mathfrak{s}\para{g}}_{L^2\para{\Sigma}}\leq C \norm{\mathfrak{s}\para{g}}_{Z}\leq C\norm{g}_{X_0},
$$
by \eqref{cor.1}, where $C>0$ denotes a generic constant that does no depend on $g$. Hence the linear operator
\begin{equation}
\Lambda_A=\tau_1\circ \mathfrak{s},
\end{equation}
is bounded from $X_0$ into $L^2\para{\Sigma}$ with $\norm{\Lambda_A}=\norm{\Lambda_A}_{\mathcal{L}\para{X_0,L^2\para{\Sigma}}}\leq C,$ the constant $C$ depending on $\norm{A}_{W^{1,\infty}\para{\Omega}}.$
\section{Fiber decomposition}\label{sec-FD}
In this section we decompose the Cauchy problem \eqref{(1.1)} into a collection of IBVP with quasi-periodic boundary conditions of the form \eqref{(1.6)}, with the aid of the partial Floquet-Bloch-Gel'fand transform (abbreviated to FBG in the sequel).
We start by recalling the definition of this transform.
\subsection{Partial FBG transform}
\label{sec-FBG}
Let $f$ be a function on $ C_0^\infty(Q)$. We define the partial FBG transform with respect to $x_1$ of $f$ by
\bel{c1}
\check{f}_{\theta}(x_1,y)=(\U f)_\theta (t,x)=\sum_{k=-\infty}^{+\infty}e^{-ik\theta} f(x_1+k,y),\ (x_1,y) \in \R \times \Omega,\  \theta \in [0,2 \pi).
\ee
With reference to \cite[Section XIII.16]{RS4}, $\U$ extends to a unitary operator, still denoted by $\U$, from $L^2(Q)$ onto the Hilbert space
$\int_{(0, 2 \pi)}^\oplus L^2(\check{\Omega})\dd \theta/2 \pi=L^2 \left( (0,2 \pi) \dd \theta/2 \pi, L^2( (0,T) \times \check{\Omega}) \right).$ The main benefit of using the partial FGB transform when dealing with a IBVP with periodic coefficients such as \eqref{(1.1)} can be understood from the following result, which is borrowed from  \cite{CKS}[Proposition 6.1].
\begin{proposition}
\label{pr-equiv}
Let $A \in \mathcal{A}$ fulfill \eqref{(1.2)} and let $g \in X_0$. Then $u$ is the solution $\mathfrak{s}(g) \in Z$ to \eqref{(1.1)} defined in Corollary \ref{cor2.1} if and only if
each $\check{u}_\theta=(\U u)_\theta \in L^2(0,T;H_\theta^2(\check{\Omega})) \cap H^1([0,T];L^2(\check{\Omega}))$, $\theta \in [0,2 \pi)$, is solution to \eqref{(1.6)} with $h=\check{g}_\theta$.
\end{proposition}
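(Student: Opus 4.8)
The plan is to exploit the fact that, because the magnetic potential $A$ is $1$-periodic in $x_1$ by \eqref{(1.2)}, the operator $\Delta_A$ commutes with the unit shift $x_1 \mapsto x_1+1$, so the partial FBG transform $\U$ defined in \eqref{c1} fibers $\para{\ref{(1.1)}}$ into the family $\para{\ref{(1.6)}}$ indexed by $\theta$. The core observation, from which the whole equivalence flows, is the intertwining relation $\para{\U\para{\Delta_A u}}_\theta = \Delta_A \check{u}_\theta$ on $\check{\Omega}$ for each $\theta$. Since each term of $\Delta_A = \Delta + 2iA\cdot\nabla + i\,\mathrm{div}(A) - |A|^2$ has $1$-periodic coefficients, and since $\partial_{x_1}$ and the $x'$-gradient commute with the shift-sum in \eqref{c1}, one checks directly on $f\in C_0^\infty(Q)$ that $A(x_1+k,y)=A(x_1,y)$ lets one pull the coefficients out of the sum, whence $\para{\U(A\cdot\nabla u)}_\theta = A\cdot\nabla\check{u}_\theta$, and similarly for the remaining terms; the factor $i\partial_t$ commutes with $\U$ trivially as the latter acts only in $x_1$. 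The relation then extends to all of $Z$ by density and the continuity of $\U$.

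Second, I would note that the two quasi-periodic boundary conditions in the last two lines of $\para{\ref{(1.6)}}$ are not extra constraints to be verified but are automatic consequences of the definition \eqref{c1}. Indeed a reindexing $k\mapsto k-1$ in \eqref{c1} gives $\check{u}_\theta(x_1+1,y)=e^{i\theta}\check{u}_\theta(x_1,y)$ for all $x_1\in\R$; taking $x_1=0$ yields $\check{u}_\theta(\cdot,1,\cdot)=e^{i\theta}\check{u}_\theta(\cdot,0,\cdot)$, and differentiating in $x_1$ before setting $x_1=0$ yields the matching condition for $\partial_{x_1}\check{u}_\theta$. This is exactly what places $\check{u}_\theta$ in $H_\theta^2(\check{\Omega})$, and conversely characterizes the range of $\U$ on $H^2$-type spaces.

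Third, I would transform the remaining three lines of $\para{\ref{(1.1)}}$ one by one. Applying $\U$ to the PDE and using the intertwining relation turns $(i\partial_t+\Delta_A)u=0$ in $Q$ into $(i\partial_t+\Delta_A)\check{u}_\theta=0$ in $\check{Q}$ for a.e. $\theta$. Linearity and boundedness of $\U$ turn the initial condition $u(0,\cdot)=0$ into $\check{u}_\theta(0,\cdot)=0$, and the lateral Dirichlet condition $u=g$ on $\Sigma=(0,T)\times\R\times\partial\Omega'$ into $\check{u}_\theta=\check{g}_\theta$ on $\check{\Sigma}$, since $\U$ commutes with the trace on $\partial\Omega'$ (it acts in $x_1$ only). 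The equivalence in both directions is then immediate from the fact that $\U$ is unitary, hence a bijection: the forward implication is the transform just described, and the converse is obtained by applying $\U^{-1}$ to the fiber system $\para{\ref{(1.6)}}$ and reassembling, the quasi-periodicity guaranteeing that the pieces glue into a function on the whole waveguide.

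The main obstacle will be the function-space bookkeeping in the last step: one must show that $u\in Z=L^2(0,T;H^2(\Omega))\cap H^1(0,T;L^2(\Omega))$ holds if and only if $\theta\mapsto\check{u}_\theta$ lies in the direct integral $\int_{(0,2\pi)}^\oplus \big(L^2(0,T;H_\theta^2(\check{\Omega}))\cap H^1([0,T];L^2(\check{\Omega}))\big)\,\dd\theta/2\pi$, with equivalent norms. Here the unitarity of $\U$ from \cite[Section XIII.16]{RS4} handles the $L^2$ and $H^1$-in-$t$ parts directly, but the $H^2$-in-$x$ part requires identifying the FBG image of $H^2(\Omega)$ with the fiber of the quasi-periodic spaces $H_\theta^2(\check{\Omega})$. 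This amounts to checking that global $H^2$-regularity across the artificial interfaces $x_1\in\Z$ is equivalent to the quasi-periodic matching of the traces of $u$ and $\partial_{x_1}u$ --- precisely the content of the second step --- and to controlling the $H_\theta^2$-norms through the intertwined operator via the a priori estimate of Proposition \ref{pr-exiuni} together with Lemma \ref{lm.1}. As the result is essentially \cite{CKS}[Proposition 6.1] adapted to the magnetic setting, the only genuinely new point is that the periodicity of $A$ keeps the intertwining exact, with no residual terms, so these norm equivalences go through as in the scalar case.
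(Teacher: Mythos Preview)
Your proposal is correct and captures exactly the mechanism behind the result: periodicity of $A$ gives the intertwining $(\U\Delta_A u)_\theta=\Delta_A\check{u}_\theta$, the quasi-periodic conditions in \eqref{(1.6)} are automatic from the reindexing in \eqref{c1}, and unitarity of $\U$ transfers the PDE, initial, and lateral boundary data fiberwise, with the $H^2$ bookkeeping handled by matching traces across $x_1\in\Z$.

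Note, however, that the paper does not supply its own proof of this proposition: it simply states the result as ``borrowed from \cite{CKS}[Proposition 6.1].'' Your sketch is therefore not competing against an argument in the present paper but is effectively reconstructing the content of the cited reference, adapted to the magnetic setting. As you yourself observe at the end, the only new ingredient over the scalar case in \cite{CKS} is that the periodicity hypothesis \eqref{(1.2)} ensures the coefficients of $\Delta_A$ commute with the shift, so the fibering goes through verbatim. In that sense your approach and the paper's are the same --- you have just made explicit what the citation leaves implicit.
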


We now examine the direct problem associated to the fibered  IBVP \eqref{(1.6)} and we define the fibered boundary operators $\Lambda_{A,\theta}$, where the real number $\theta$ is arbitrary in $[0, 2 \pi)$.
\subsection{Analysis of the direct fibered problem}
Let us prove the following existence and uniqueness result.
\begin{lemma}
\label{lm-c}
Let $A\in \mathcal{A}$ fulfill $\para{\ref{(1.2)}}$. Then for every $f\in W^{1,1}\para{0,T;L^2\para{\check{\Omega}}}$ there is a unique solution $w\in \check{Z}_\theta=L^2(0,T;H_\theta^2(\check{\Omega})) \cap H^1(0,T;L^2(\check{\Omega}))$ to the following initial boundary value problem
\begin{equation}\label{eq:}
\left\{
  \begin{array}{lll}
  &\para{i\partial_t+\Delta_A}w=f\,\,\,\,\,\,\,\,\,\,\,\,\,\,\,\,\,\,\,\,\,\,\,\,\,\,\,\,\,\,\,\,\,\,\,\,\,\,\,\,\,\,\,\,\,\,\,\,\,\,\,\,\,\,\mbox{in}\,\,\check{Q},
   \\
     &\hbox{$ w\para{0,.}=0\,\,\,\,\,\,\,\,\,\,\,\,\,\,\,\,\,\,\,\,\,\,\,\,\,\,\,\,\,\,\,\,\,\,\,\,\,\,\,\,\,\,\,\,\,\,\,\,\,\,\,\,\,\,\,\,\,\,\,\,\,\,\,\,\,\,\,\,\,\,\,\,\mbox{in}\,\,\check{\Omega} $},\\ & w=0\,\,\,\,\,\,\,\,\,\,\,\,\,\,\,\,\,\,\,\,\,\,\,\,\,\,\,\,\,\,\,\,\,\,\,\,\,\,\,\,\,\,\,\,\,\,\,\,\,\,\,\,\,\,\,\,\,\,\,\,\,\,\,\,\,\,\,\,\,\,\,\,\,\,\,\,\,\,\,\,\,\,\,\,\,\mbox{on}\,\,\check{\Sigma},\\&w\para{.,1,.}=e^{i\theta}w\para{.,0,.}\,\,\,\,\,\,\,\,\,\,\,\,\,\,\,\,\,\,\,\,\,\,\,\,\,\,\,\,\,\,\,\,\,\,\,\,\,\,\,\,\,\mbox{in}\,\,\para{0,T}\times\Omega',\\
  &\partial_{x_1}w\para{.,1,.}=e^{i\theta}\partial_{x_1}w\para{.,0,.}\,\,\,\,\,\,\,\,\,\,\,\,\,\,\,\,\,\,\,\,\,\,\,\,\mbox{in}\,\,\para{0,T}\times\Omega'.
  \end{array}
\right.
\end{equation}
Moreover, we may find a constant $C=C\para{T,\Omega',\norm{A}_{W^{1,\infty}\para{\Omega}}}>0$ such that the estimates
\begin{equation}\label{.1}
 \displaystyle\norm{w}_{L^2\para{\check{Q}}}\leq \norm{f}_{L^1\para{0,T;L^2\para{\check{\Omega}}}},
 \end{equation}
and
\begin{equation}\label{.2}
 \norm{\nabla w}_{L^2\para{\check{Q}}}\leq C\para{\epsilon^{-1}\norm{f\para{t}}_{L^1\para{0,T;L^2\para{\check{\Omega}}}}+2\epsilon\norm{f'\para{t}}_{L^1\para{0,T;L^2\para{\check{\Omega}}}}},
\end{equation}
hold for every  $0<\epsilon\leq 1$ and $\theta \in [0,2 \pi)$.
\end{lemma}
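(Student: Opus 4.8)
The plan is to reduce \eqref{eq:} to an abstract Cauchy problem to which Lemma \ref{lm1} applies, and then to read off \eqref{.1} and \eqref{.2} from the energy method. First I would introduce the self-adjoint operator $L_{A,\theta}$ on $L^2(\check\Omega)$ generated by the closed quadratic form $\ell_{A,\theta}(u)=\int_{\check\Omega}|\nabla_A u|^2\,dx$ with form domain $\{u\in H^1(\check\Omega):\ u=0\ \text{on}\ (0,1)\times\partial\Omega'\ \text{and}\ u(1,\cdot)=e^{i\theta}u(0,\cdot)\}$. The analysis of Lemma \ref{lm.1} carries over verbatim, with the \emph{same} Poincar\'e constant $C(\Omega')$: indeed, $\|u\|_{L^2(\check\Omega)}\le C(\Omega')\|\nabla u\|_{L^2(\check\Omega)}$ on $\check\Omega$ because for a.e.\ $x_1\in(0,1)$ the slice $u(x_1,\cdot)$ lies in $H_0^1(\Omega')$, so the two-dimensional Poincar\'e inequality applies slicewise and is integrated in $x_1$. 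Hence, for $A\in\mathcal{A}$ fulfilling \eqref{(1.2)}, $L_{A,\theta}$ acts as $-\Delta_A$, its domain is $\mathcal{D}(L_{A,\theta})=\{u\in H^2(\check\Omega):\ u=0\ \text{on}\ (0,1)\times\partial\Omega',\ u(1,\cdot)=e^{i\theta}u(0,\cdot),\ \partial_{x_1}u(1,\cdot)=e^{i\theta}\partial_{x_1}u(0,\cdot)\}$ (elliptic regularity up to the boundary on the finite cylinder with mixed Dirichlet/quasi-periodic conditions, as in \cite{CKS}), and one has the coercivity $\|\nabla u\|_{L^2(\check\Omega)}\le C\|\nabla_A u\|_{L^2(\check\Omega)}$ on the form domain, exactly as in \eqref{91}.

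Since $L_{A,\theta}\ge 0$ is self-adjoint, $U=-iL_{A,\theta}$ is skew-adjoint, hence generates a unitary $C_0$-group and is $m$-dissipative with dense domain. Writing \eqref{eq:} as $w'(t)=Uw(t)-if(t)$, $w(0)=0$, with $-if\in W^{1,1}(0,T;L^2(\check\Omega))$, Lemma \ref{lm1} produces a unique $w\in\mathcal{C}([0,T];\mathcal{D}(L_{A,\theta}))\cap\mathcal{C}^1([0,T];L^2(\check\Omega))$; this space is contained in $\check Z_\theta$ and its elements automatically satisfy the lateral and quasi-periodic conditions appearing in \eqref{eq:}, which settles existence and uniqueness.

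For the two estimates I would use the energy method, the crucial point being that the boundary terms cancel. With $D_j=\partial_j+ia_j$ one has $\int_{\check\Omega}\Delta_A w\,\bar w\,dx=-\|\nabla_A w\|_{L^2(\check\Omega)}^2+\int_{\partial\check\Omega}(\nabla_A w\cdot\nu)\bar w\,d\sigma$; on the lateral wall $w=0$, while the contributions of the two caps $\{0\}\times\Omega'$ and $\{1\}\times\Omega'$ cancel each other by the quasi-periodic conditions on $w$ and $\partial_{x_1}w$ together with $a_1(1,\cdot)=a_1(0,\cdot)$ (periodicity \eqref{(1.2)}), so that $\int_{\check\Omega}\Delta_A w\,\bar w\,dx=-\|\nabla_A w(t)\|_{L^2(\check\Omega)}^2\in\R$. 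Testing \eqref{eq:} against $\bar w$ and taking the imaginary part gives $\frac12\frac{d}{dt}\|w(t)\|_{L^2(\check\Omega)}^2\le\|f(t)\|_{L^2(\check\Omega)}\|w(t)\|_{L^2(\check\Omega)}$, hence $\sup_{0\le t\le T}\|w(t)\|_{L^2(\check\Omega)}\le\|f\|_{L^1(0,T;L^2(\check\Omega))}$ in view of $w(0)=0$, which in particular yields \eqref{.1}. For \eqref{.2} I would differentiate \eqref{eq:} in time (legitimate after regularizing $f$): $v=\partial_t w$ solves the same equation with source $f'$, and since $w(0)=0$ forces $\Delta_A w(0)=0$, it has initial datum $v(0)=-if(0)$; the same energy computation gives $\|v(t)\|_{L^2(\check\Omega)}\le\|f(0)\|_{L^2(\check\Omega)}+\|f'\|_{L^1(0,t;L^2(\check\Omega))}$, and averaging $f(0)=f(t)-\int_0^tf'$ over $t\in(0,T)$ bounds $\|f(0)\|_{L^2(\check\Omega)}$ by $T^{-1}\|f\|_{L^1(0,T;L^2(\check\Omega))}+\|f'\|_{L^1(0,T;L^2(\check\Omega))}$. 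Taking instead the \emph{real} part in the test against $\bar w$ gives $\|\nabla_A w(t)\|_{L^2(\check\Omega)}^2\le(\|\partial_t w(t)\|_{L^2(\check\Omega)}+\|f(t)\|_{L^2(\check\Omega)})\|w(t)\|_{L^2(\check\Omega)}$; integrating over $(0,T)$, using $\sup_t\|w(t)\|_{L^2(\check\Omega)}\le\|f\|_{L^1(0,T;L^2(\check\Omega))}$ together with the bounds just obtained, one reaches $\|\nabla_A w\|_{L^2(\check Q)}^2\le C(T)\big(\|f\|_{L^1(0,T;L^2(\check\Omega))}^2+\|f\|_{L^1(0,T;L^2(\check\Omega))}\|f'\|_{L^1(0,T;L^2(\check\Omega))}\big)$. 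A weighted Young inequality $\sqrt{ab}\le\frac{a}{2\epsilon}+\frac{\epsilon b}{2}$ (with $0<\epsilon\le1$, the pure $\|f\|_{L^1}$ contribution being absorbed since $\epsilon^{-1}\ge1$) turns this into the asserted $\epsilon$-split, and a final application of the coercivity $\|\nabla w\|_{L^2(\check Q)}\le C\|\nabla_A w\|_{L^2(\check Q)}$ from the first paragraph produces \eqref{.2}.

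I expect the main obstacle to be the first paragraph: verifying that $\mathcal{D}(L_{A,\theta})$ is \emph{exactly} the claimed $H^2$-space, i.e.\ full elliptic regularity up to the boundary on the finite cylinder $\check\Omega$ carrying simultaneously a Dirichlet condition on the lateral wall and $\theta$-quasi-periodic conditions on the two caps; the smallness hypothesis $A\in\mathcal{A}$ enters only as in Lemma \ref{lm.1}, to keep $-\Delta_A$ self-adjoint with the same domain as $-\Delta$. This is the mixed-boundary-condition analogue of what is used for $\Omega$ in \cite{CKS}. The remaining steps are routine once the cancellation of boundary terms above is in place, the only bookkeeping subtlety being the non-vanishing initial datum $v(0)=-if(0)$ for the differentiated problem, which is handled by the averaging trick.
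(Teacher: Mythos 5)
Your proposal is correct and follows essentially the same route as the paper: introduce the self-adjoint fibered operator generated by the quadratic form $\ell_\theta$ with mixed Dirichlet/quasi-periodic form domain, identify its operator domain with the $H^2_\theta$-type space, and apply Lemma \ref{lm1} (with the skew-adjoint generator, which you state more carefully than the paper's ``$U=L_\theta$''). The only difference is that you write out in full the two ingredients the paper outsources to citations — the domain characterization to \cite{CKS}[Lemma 3.1] and the energy estimates \eqref{.1}--\eqref{.2} to \cite{BC}[Lemma 3.2] — and your energy computation (imaginary part for the $L^2$ bound, time-differentiation plus the real part for the gradient bound, with the boundary terms cancelling by quasi-periodicity and \eqref{(1.2)}) is exactly the standard argument behind that cited lemma.
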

\begin{proof}
Let $L_{\theta}$ be the self-adjoint operator in $L^2(\check{\Omega})$ generated by the closed quadratic form
$$ \ell_{\theta}(u)=\int_{\check{\Omega}} | \nabla_A u(x) |^2 \dd x,\,\, u \in \mathcal{D}(\ell_{\theta}) = L^2(0,1;H_0^1(\Omega')) \cap H_{\theta}^1(0,1;L^2(\Omega')), $$
in such a way that $L_{\theta}$ acts as $(-\Delta_A)$ on its domain $\mathcal{D}(L_{\theta}) = L^2(0,1;H_0^1(\Omega'))\cap H_{\theta}^2(\check{\Omega})$ according to \cite{CKS}[Lemma 3.1]. Therefore, applying Lemma \ref{lm1} with $U=L_\theta$ and $X=L^2\para{\check{\Omega}}$ we get for every $f\in W^{1,1}(0,T;L^2(\check{\Omega}))$ that there is a unique solution
$w \in L^2(0,T;L^2(0,1;H_0^1(\Omega')) \cap H_{\theta}^2(\check{\Omega})) \cap H^1(0,T;L^2(\check{\Omega}))$ to the IBVP \eqref{eq:}. Moreover estimates  \eqref{.1} and \eqref{.2} follow readily from \cite{BC}  [Lemma 3.2]. This completes the proof of the lemma.
\end{proof}
\begin{remark}\label{R2}
Let $h\in \check{X}_\theta$. From the definition of  $\check{X}_\theta$ we may find $W \in H^2(0,T;H_{\theta}^2(\check{\Omega}))$ such that $\check{\tau} W =h$ and $W\para{0,.}=0$. Thus, taking $f=\para{i\partial_t+\Delta_A} W$, it is obvious that $W-w$ is solution to \eqref{(1.6)} if and only if $w$ is solution to \eqref{eq:}. This implies that for every $h\in \check{X}_\theta$ there exists a unique solution $\mathfrak{s}_\theta(h) \in \check{Z}_\theta$ to the initial boundary value problem \eqref{(1.6)}. Moreover there exist $C=C\para{\Omega',\,T,\,\norm{A}_{W^{1,\infty}}}>0$ such that,
\begin{equation}\label{b.1}
\norm{\mathfrak{s}_\theta\para{h}}_{\check{Z}_\theta} \leq C \norm{h}_{\check{X}_\theta}.
\end{equation}
\end{remark}
\begin{remark}\label{R1}
We have a similar result as in Remark \ref{R2} by replacing the initial condition $u\para{0,.}=0$ in  \eqref{(1.6)}  by the final condition $u\para{T,.}=0.$ To see this we take $v\para{t,x}=u\para{T-t,x},\,\,\para{t,x}\in \check{Q}$, notice that $u$ is solution to  the boundary value problem (BVP in short)
\begin{equation}
\left\{
\begin{array}{ll}
(i\partial_t +\Delta_ A) u=0 & \mbox{in}\ \check{Q}, \\
u(T,\cdot )=0 & \mbox{in}\ \check{\Omega}, \\
u=h & \mbox{on}\ \check{\Sigma},
\\
u(\cdot,1,\cdot)=e^{i\theta} u(\cdot,0,\cdot) & \mathrm{on}\ (0,T) \times \Omega',
\\
\partial_{x_1} u(\cdot,1,\cdot)=e^{i\theta} \partial_{x_1} u(\cdot,0,\cdot) & \mathrm{on}\ (0,T) \times \Omega',
\end{array}
\right.
\end{equation}
if and only if $v$ is solution to the system
\begin{equation}
\left\{
\begin{array}{ll}
(-i\partial_t +\Delta_ A) v=0 & \mbox{in}\ \check{Q}, \\
v(0,\cdot )=0 & \mbox{in}\ \check{\Omega}, \\
v=h & \mbox{on}\ \check{\Sigma},
\\
v(\cdot,1,\cdot)=e^{i\theta} v(\cdot,0,\cdot) & \mathrm{on}\ (0,T) \times \Omega',
\\
\partial_{x_1} v(\cdot,1,\cdot)=e^{i\theta} \partial_{x_1} v(\cdot,0,\cdot) & \mathrm{on}\ (0,T) \times \Omega',
\end{array}
\right.
\end{equation}
and apply Remark \ref{R2}.
\end{remark}
Having seen this, we may now define the fibered boundary operator from \eqref{def-fibbdry}
\begin{equation}\label{lm3.1}
\Lambda_{A,\theta} : h \mapsto  \left(
\partial_\nu +iA\cdot\nu\right)u
\end{equation}
where $u$ is the solution to \eqref{(1.6)}. Let $\check{\tau}_1$ be the linear bounded operator from $L^2\para{0,T;H^2\para{\check{\Omega}}}\cap \,H^1\para{0,T;L^2\para{\check{\Omega}}}$ to $L^2\para{\check{\Sigma}}$, obeying
$$
\omega \longmapsto \para{\partial_\nu + i A\cdot\nu}\omega_{\mid \check{\Sigma}}\,\,\,\,\mbox{for}\,\omega\in\mathcal{C}_0^\infty\para{[0,T]\times\para{0,1},\mathcal{C}^\infty\para{\overline{\Omega'}}}.
$$
Then for every $h\in \check{X}_\theta,\,\theta\in [0,2\pi)$, we have
$$
\norm{ \check{\tau}_1 \mathfrak{s}_\theta\para{h}}_{L^2\para{\check{\Sigma}}}\leq C \norm{\mathfrak{s}_\theta\para{h}}_{\check{Z}_\theta}\leq C\norm{ h}_{\check{X}_\theta},
$$
by \eqref{b.1}, hence the linear operator
\begin{equation}
\Lambda_{A,\theta}=\check{\tau}_1\circ \mathfrak{s}_\theta,\,\,\,\,\theta\in[0,2\pi),
\end{equation}
is bounded from $\check{X}_\theta$ into $L^2\para{\check{\Sigma}}$ with $\norm{\Lambda_A}=\norm{\Lambda_A}_{\mathcal{L} \para{\check{X}_\theta,L^2\para{\check{\Sigma}}}}\leq C,$ where $C>0$ is a constant  depending on $\norm{A}_{W^{1,\infty}\para{\Omega}}.$ Further,
In view of \cite{CKS}[Proposition 6.2] the operator $\Lambda_A$ is equivalent to the direct integral of $\{\Lambda_{A,\theta},\,\theta\in [0,2\pi)\}$ as claimed below.
\begin{proposition}\label{dec-boun}
Let $A$ be the same as in Proposition \ref{pr-equiv}. Then we have
$$
\mathcal{U}\Lambda_A \mathcal{U}^{-1}=\int^\oplus_{(0,2\pi)} \Lambda_{A,\theta} \frac{\dd\theta}{2\pi}.
$$
Moreover in light of  \cite[Chap. II, \S 2, Proposition 2]{Di}, this entails that
\bel{es}
\norm{\Lambda_{A}} = \sup_{\theta \in (0,2 \pi)} \norm{\Lambda_{A,\theta}} .
\ee
\end{proposition}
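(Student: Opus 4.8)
The plan is to deduce the fiber decomposition of $\Lambda_A$ from that of the solution operator $\mathfrak{s}$, and then to read off the norm identity from the standard formula for the norm of a decomposable operator. Fix $g\in X_0$ and put $u=\mathfrak{s}(g)\in Z$. By Proposition \ref{pr-equiv}, for every $\theta\in[0,2\pi)$ the fiber $\check{u}_\theta=(\mathcal{U}u)_\theta$ lies in $\check{Z}_\theta$ and solves \eqref{(1.6)} with boundary datum $\check{g}_\theta=(\mathcal{U}g)_\theta$; by the uniqueness part of Remark \ref{R2} this forces $(\mathcal{U}\mathfrak{s}(g))_\theta=\mathfrak{s}_\theta\big((\mathcal{U}g)_\theta\big)$ for every $\theta$. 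Thus $\mathcal{U}$ conjugates $\mathfrak{s}$ to $\int^\oplus_{(0,2\pi)}\mathfrak{s}_\theta\,\dd\theta/2\pi$, once one records --- exactly as in \cite{CKS} --- that the partial FBG transform restricts to a unitary identification of $H^2(0,T;H^2(\Omega))$ with $\int^\oplus_{(0,2\pi)}H^2(0,T;H_\theta^2(\check{\Omega}))\,\dd\theta/2\pi$, and hence, after taking traces, of $X_0$ with $\int^\oplus_{(0,2\pi)}\check{X}_\theta\,\dd\theta/2\pi$ and of $L^2(\Sigma)$ with $\int^\oplus_{(0,2\pi)}L^2(\check{\Sigma})\,\dd\theta/2\pi$.

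The remaining step is to commute the FBG transform past the boundary operators $\tau_1$ and $\check{\tau}_1$. This is where the product geometry $\partial\Omega=\R\times\partial\Omega'$ is used: the outward unit normal is $\nu(x)=(0,\nu'(x'))$, so $\partial_\nu$ involves only the tangential variables $x'$, while $A\cdot\nu=A'\cdot\nu'$ is $1$-periodic in $x_1$ by \eqref{(1.2)}. Consequently $\partial_\nu+iA\cdot\nu$ commutes with the integer shifts $x_1\mapsto x_1+k$ (up to the periodicity of $A$), and a termwise inspection of the series \eqref{c1} gives $(\mathcal{U}\tau_1 w)_\theta=\check{\tau}_1(\mathcal{U}w)_\theta$, first for $w\in\mathcal{C}_0^\infty\para{[0,T]\times\R;\mathcal{C}^\infty\para{\overline{\Omega'}}}$ and then, by density and the boundedness of $\tau_1$ and $\check{\tau}_1$, for all $w\in Z$. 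Combining the two steps yields, for each $g\in X_0$ and a.e.\ $\theta$,
\[
(\mathcal{U}\Lambda_A g)_\theta=(\mathcal{U}\tau_1\mathfrak{s}(g))_\theta=\check{\tau}_1(\mathcal{U}\mathfrak{s}(g))_\theta=\check{\tau}_1\mathfrak{s}_\theta\big((\mathcal{U}g)_\theta\big)=\Lambda_{A,\theta}(\mathcal{U}g)_\theta,
\]
which is precisely $\mathcal{U}\Lambda_A\mathcal{U}^{-1}=\int^\oplus_{(0,2\pi)}\Lambda_{A,\theta}\,\dd\theta/2\pi$.

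For the norm identity, unitarity of $\mathcal{U}$ gives $\norm{\Lambda_A}=\norm{\int^\oplus_{(0,2\pi)}\Lambda_{A,\theta}\,\dd\theta/2\pi}$, and by \cite[Chap.\ II, \S 2, Proposition 2]{Di} this equals the essential supremum of $\theta\mapsto\norm{\Lambda_{A,\theta}}$. To upgrade this to the genuine supremum appearing in \eqref{es}, I would argue that $\theta\mapsto\Lambda_{A,\theta}$ is continuous in operator norm. A convenient device is the gauge substitution $w\mapsto e^{i\theta x_1}w$: it is unitary on $L^2(\check{\Omega})$, it leaves the Neumann part of the trace unchanged because $\nu$ has no $x_1$-component, and by the identity \eqref{(40)} with $\Psi=\theta x_1$ it sends the solution of \eqref{(1.6)} for $A$ to the solution of the genuinely periodic ($\theta=0$) problem for the potential $A+\theta\,\mathbf{e}_1$; hence $\Lambda_{A,\theta}$ is unitarily equivalent to $\Lambda_{A+\theta\mathbf{e}_1,0}$, and continuity in $\theta$ reduces to the Lipschitz dependence of the fibered DN map on a $W^{1,\infty}$-perturbation of the potential, which follows from the energy estimates of Lemma \ref{lm-c}.

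I expect this last upgrade to be the only genuinely delicate point. Steps 1 and 2 are essentially bookkeeping around Proposition \ref{pr-equiv}, and if one settles for the essential supremum --- which still suffices for the fiberwise estimate underlying Theorem \ref{thmm} --- the conclusion is immediate from Dixmier's theorem. One small caveat for the continuity argument is that $A+\theta\mathbf{e}_1$ need not stay in the smallness class $\mathcal{A}$ for all $\theta\in[0,2\pi)$, so the gauge reduction is cleanest applied to small increments of $\theta$ (or one bypasses it by a direct perturbation estimate in $\theta$ on the quasi-periodic boundary conditions); since the parameter interval is compact, local continuity then yields $\sup=\operatorname{ess\,sup}$.
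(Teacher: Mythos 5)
Your overall route is exactly the intended one: the paper gives no proof of its own here, deferring the decomposition to \cite{CKS}[Proposition 6.2] and the norm identity to Dixmier, and what you reconstruct --- fiber the solution operator via Proposition \ref{pr-equiv} plus uniqueness, commute the FBG transform past $\partial_\nu+iA\cdot\nu$ using the product geometry $\partial\Omega=\R\times\partial\Omega'$ and the $1$-periodicity \eqref{(1.2)}, then apply Dixmier to the decomposable operator --- is precisely the argument that citation contains. Two points deserve more care than your sketch gives them. First, for Dixmier's proposition to apply you need $\mathcal{U}$ to be a genuine unitary between $X_0$ (a quotient/trace norm, defined by an infimum over extensions) and $\int^\oplus\check{X}_\theta\,\dd\theta/2\pi$; the interchange of that infimum with the direct integral is the one nontrivial piece of bookkeeping in step~1 and should not be dismissed as automatic. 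Second, your upgrade from $\operatorname{ess\,sup}$ to $\sup$ --- a real issue that the paper silently ignores --- is the right instinct, but the gauge conjugation $w\mapsto e^{i\theta x_1}w$ is \emph{not} an isometry of the $H^2$-type norms on $\check{X}_\theta$ and $\check{X}_0$, only a bounded isomorphism with $\theta$-dependent constants, so "$\Lambda_{A,\theta}$ unitarily equivalent to $\Lambda_{A+\theta\mathbf{e}_1,0}$" does not literally give continuity of $\theta\mapsto\norm{\Lambda_{A,\theta}}$; your fallback (a direct perturbation estimate in $\theta$ on the quasi-periodic boundary conditions, using the energy bounds of Lemma \ref{lm-c}) is the cleaner way to finish, and also sidesteps the $\mathcal{A}$-membership caveat you rightly flag. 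With those two repairs the proof is complete; note also that, as you observe, the essential supremum already suffices for the way \eqref{es} is used in deducing Theorem \ref{thm1} from Theorem \ref{thmm}.
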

We turn now to examining the inverse problem of determining $ \frac{\partial a_2}{\partial x_3}-\frac{\partial a_3}{\partial x_2} $ from the knowledge of $\Lambda_{A,\theta},$ where the real number $\theta$ is arbitrary in $[0,2\pi)$.
\section{Geometric optics solutions}
\label{sec-ogs}
 In this section we define geometric optics solutions for the magnetic Schr\"odinger equation which are useful for the derivation of main result. More precisely, we build geometric optics solutions to the system
\begin{equation}
\label{(4.1)}
\left\{
  \begin{array}{lll}
    &\para{ i\partial_t +\Delta_A}u=0\,\,\,\,\,\,\,\,\,\,\,\,\,\,\,\,\,\,\,\,\,\,\,\,\,\,\,\,\,\,\,\,\,\,\,\,\,\,\,\,\,\,\,\,\,\,\,\mbox{in}\,\,\check{\Omega},
    \\&u(.,1,.)=e^{i\theta}u(.,0,.)\,\,\,\,\,\,\,\,\,\,\,\,\,\,\,\,\,\,\,\,\,\,\,\,\,\,\,\,\,\,\,\,\,\,\,\,\,\mbox{in}\,\,(0,T)\times\Omega'
  ,\\&\partial_{ x_1}u(.,1,.)=e^{i\theta}\partial _{x_1}u(.,0,.)\,\,\,\,\,\,\,\,\,\,\,\,\,\,\,\,\,\,\,\,\mbox{in}\,\,(0,T)\times\Omega',
  \end{array}
\right.
\end{equation}
where $\theta$ is arbitrarily fixed in  $[0,2 \pi)$.\\Let $w=w\para{t,x}\in \mathcal{C}^1\para{\cro{0,T};L^2\para{\check{\Omega}}}\cap \mathcal{C}\para{\cro{0,T};H^2\para{\check{\Omega}}}$ satisfy the conditions
\begin{equation}\label{87}
\left\{
  \begin{array}{lll}
    &w\para{0,.}=0\,\,\,\,\,\,\,\,\,\,\,\,\,\,\,\,\,\,\,\,\,\,\,\,\,\,\,\,\,\,\,\,\,\,\,\,\,\,\,\,\,\,\,\,\,\,\,\,\,\,\,\,\,\,\,\,\,\,\,\,\,\,\,\,\,\,\,\,\,\,\,\,\,\,\,\,\mbox{in}\,\,\check{\Omega},
    \\&w=0\,\,\,\,\,\,\,\,\,\,\,\,\,\,\,\,\,\,\,\,\,\,\,\,\,\,\,\,\,\,\,\,\,\,\,\,\,\,\,\,\,\,\,\,\,\,\,\,\,\,\,\,\,\,\,\,\,\,\,\,\,\,\,\,\,\,\,\,\,\,\,\,\,\,\,\,\,\,\,\,\,\,\,\,\,\,\,\,\,\mbox{in}\,\,\check{\Sigma}
    \\&w\para{.,1,.}-e^{i\theta}w\para{.,0,.}=0\,\,\,\,\,\,\,\,\,\,\,\,\,\,\,\,\,\,\,\,\,\,\,\,\,\,\,\,\,\,\,\,\,\,\,\,\mbox{in}\,\,(0,T)\times\Omega'
  \\&\partial_{x_1}w\para{.,1,.}-e^{i\theta}\partial_{x_1}w\para{.,0,.}=0\,\,\,\,\,\,\,\,\,\,\,\,\,\,\,\,\,\,\,\mbox{in}\,\,(0,T)\times\Omega'.
  \end{array}
\right.
\end{equation}
Let $h\in \check{X}_\theta$. From Remark \ref{R1}, we know that there exists a unique solution $u_1\in \mathcal{C}^1\para{\cro{0,T};L^2\para{\check{\Omega}}}\cap \mathcal{C}\para{\cro{0,T};H^2\para{\check{\Omega}}}$ of the following BVP
\begin{equation}\label{86}
\left\{
  \begin{array}{lll}
    & \para{i\partial_t +\Delta_A}u_1=0\,\,\,\,\,\,\,\,\,\,\,\,\,\,\,\,\,\,\,\,\,\,\,\,\,\,\,\,\,\,\,\,\,\,\,\,\,\,\,\,\,\,\,\,\,\,\,\,\,\mbox{in}\,\,\check{\Omega},
    \\&u_1(T,.)=0\,\,\,\,\,\,\,\,\,\,\,\,\,\,\,\,\,\,\,\,\,\,\,\,\,\,\,\,\,\,\,\,\,\,\,\,\,\,\,\,\,\,\,\,\,\,\,\,\,\,\,\,\,\,\,\,\,\,\,\,\,\,\,\,\,\mbox{in}\,\,\check{\Omega}
    \\&u_1=h\,\,\,\,\,\,\,\,\,\,\,\,\,\,\,\,\,\,\,\,\,\,\,\,\,\,\,\,\,\,\,\,\,\,\,\,\,\,\,\,\,\,\,\,\,\,\,\,\,\,\,\,\,\,\,\,\,\,\,\,\,\,\,\,\,\,\,\,\,\,\,\,\,\,\,\,\,\,\mbox{in}\,\,
    \check{\Sigma}
    \\&u_1(.,1,.)=e^{i\theta}u_1(.,0,.)\,\,\,\,\,\,\,\,\,\,\,\,\,\,\,\,\,\,\,\,\,\,\,\,\,\,\,\,\,\,\,\,\,\,\,\,\mbox{in}\,\,(0,T)\times\Omega'
  ,\\&\partial_{ x_1}u_1(.,1,.)=e^{i\theta}\partial _{x_1}u_1(.,0,.)\,\,\,\,\,\,\,\,\,\,\,\,\,\,\,\,\,\,\,\mbox{in}\,\,(0,T)\times\Omega'.
  \end{array}
\right.
\end{equation}
In view of \eqref{87} and \eqref{86} we get by applying the Green formula that
\begin{equation}\label{88}
\begin{array}{lll}
\displaystyle\int_{\check{Q}}\para{i\partial_t+\Delta_{A}}w\overline{u_1}dx dt&=&\displaystyle\int_{\check{Q}}w\overline{\para{i\partial_t+\Delta_{A}}u_1}dxdt-\int_{\check{\Sigma}}\para{\partial_\nu+i A\cdot \nu}w\overline{u_1}d\sigma dt\\&=&\displaystyle-\int_{\check{\Sigma}}\para{\partial_\nu+i A\cdot \nu}w\overline{u_1}d\sigma dt.
\end{array}
\end{equation}
\subsection{Geometric optics solutions in periodic media}
Let $R>0$ be so large, such that $\overline{\Omega'}\subset B(0,R)$ and set
$$
\mathfrak{D}_R=B(0,R+1)\backslash \overline{B(0,R)}\subset \R^2,
$$
where  $B\para{a,R}$ denotes the ball in $\R^2$ centered at $a\in\R^2$ with radius $r>0.$
Let $\phi_0\in\mathcal{C}_0^\infty\left(\R^2\right)$ supported in $\mathfrak{D}_R$. Notice that
$$
\mbox{supp}\,\phi_0\cap \Omega'=\emptyset.
$$
Take $\sigma$ so large that $\sigma>\frac{2R+1}{T}$, in such a way that for all $\omega'\in\mathbb{S}^1$ we have
$$
\left(\mbox{supp}\,\phi_0\pm\sigma T \omega'\right)\cap \Omega'=\emptyset.
$$
Indeed, it is clear from the embedding $\overline{\Omega'}\subset B(0,R)$ and from the fact that $\mbox{supp}\,\phi_0\cap\Omega'=\emptyset$, that for each   $x'\in\mbox{supp}\,\phi_0\pm\sigma T\omega'$, we have
 $$x'=y'\pm\sigma T\omega',\,\,\,\,\,\,\, y'\in\mbox{supp}\,\phi_0$$
hence
 $$\abs{x'}\geq\sigma T-\abs{y'}>2R+1-\abs{y'}>R.$$
Since, $\mbox{supp}\left(\phi_0\right) \subset B\para{0,R+1},$ this entails that $x'\notin\Omega'.$ Next, for $\phi_{\theta}\in H^2_{\theta}\left(\R^3\right)$ and $\phi_0\in\mathcal{C}^\infty_0\para{\R^2}$ supported in $\mathfrak{D}_R,$
we put
$$
\phi\para{x_1,x'}=\phi_{\theta}\para{x_1,x'}\phi_0\para{x'},\,\,\para{x_1,x'}\in\R^3.
$$
Then it is apparent that the function $\Phi$ given by
\begin{equation}\label{phi}
\Phi\left(t,x\right)=\phi\left(x_1,x'-t\omega'\right),\,\,\,t\in\R,\,\para{x_1,x'}\in\R^3,
\end{equation}
is solution to the transport equation
$$
\left(\partial_t+\omega'\cdot\nabla_{x'}\right)\Phi\left(t,x\right)=0,\,\,\,\para{t,x}\in\R\times\R^3.
$$
Let $A=\left(a_1,a_2,a_3\right)\in W^{3,\infty}\left(\Omega,\R^3\right)$ fulfill $(\ref{(1.2)})$ and put
$$
A'=\left(a_2,a_3\right).
$$
We extend $A'$ by 0 outside $\Omega$ and we set
$$
b\left(t,x\right)=\exp\left(-i\int_0^t \omega'\cdot A'\left(x_1,x'-s\omega'\right)ds\right).
$$
We have for all $\para{t,x}\in \R\times\R^3$
$$
\begin{array}{lll}\displaystyle
\omega
'\cdot \nabla_{x'}b\para{t,x}&=&\displaystyle-ib\para{t,x}\int_0^t\sum_{k=2}^3 \omega_k\sum_{j=2}^3\omega_j\partial_ja_k\left(x_1,x'-s\omega'\right)ds\\&=&\displaystyle ib\para{t,x}\sum_{k=2}^3\omega_k\int_0^t\frac{d}{ds}a_k(x_1,x'-s\omega')ds\\&=&\displaystyle i\omega'\cdot A'\left(x_1,x'-t\omega'\right)b\para{t,x}-i\omega'\cdot A'\para{x_1,x'}b\para{t,x}\\&=&\displaystyle-\partial_tb\para{t,x}-i\omega'\cdot A'\para{x}b\para{t,x}.
\end{array}
$$
Therefore $b$ satisfies the equation
$$
\left(\partial_t+\omega'\cdot \nabla_{x'}+i\omega'\cdot A'\right)b=0.
$$
For $\omega'\in \mathbb{S}^1$, we consider the following subspace $\mathcal{H}^2_{\theta,\omega'}\para{\mathfrak{D}_R}$ of  $H^2\left(\para{0,1}\times\R^2\right)$ made of functions $\phi=\phi_\theta\phi_0\in H^2\para{\para{0,1}\times\R^2}$ such that $\omega'\cdot\nabla_{x'}\phi\in H^2\para{\para{0,1}\times\R^2},$ where $\phi_\theta\in H^2_\theta\para{\para{0,1}\times\R^2} $ and $\phi_0\in H^2\para{\R^2}$ satisfies $\mbox{supp}\left(\phi_0\right) \subset \mathfrak{D}_R$.
This space is equipped with its natural norm:
$$
\mathcal{N}_{\omega'}\left(\phi\right)=\left\|\phi\right\|_{H^2\left(\para{0,1}\times\mathbb{R}^2\right)}+\|\omega'\cdot\nabla_{x'}\phi\|_{H^2(\para{0,1}\times\mathbb{R}^2)}.
$$
In the rest of this paper we assume that $\sigma\geq1.$\\
Let us now prove the following lemma.
\begin{lemma}\label{1}
Fix $\omega'\in \mathbb{S}^1,\,\theta\in [0,2\pi)$ and let $A\in W^{3,\infty}\para{\Omega;\mathbb{R}^3}\cap \mathcal{A}$ satisfies \eqref{(1.2)}. If $\phi\in\mathcal{H}_{\theta,\omega'}^2\para{\mathfrak{D}_R}$ then the equation
\begin{equation}
\para{i\partial_t+\Delta_A}u=0,\,\,\,\,\para{t,x}\in \check{Q},
\end{equation}
admits a solution $u\in\mathcal{C}^1\para{\cro{0,T};L^2\para{\check{\Omega}}}\cap \mathcal{C}\para{\cro{0,T};H_\theta^2\para{\check{\Omega}}},$ of the form
$$
u\para{t,x}=\Phi\para{2\sigma t,x}b\para{2\sigma t,x} e^{i\sigma\para{x'\cdot\omega'-\sigma t}}+\psi_\sigma\para{t,x},
$$
where $\Phi$ is defined in \eqref{phi} and $\psi_\sigma$ satisfies
$$
\left\{
  \begin{array}{lll}
     &\hbox{$ \psi_\sigma\para{0,.}=0,\,\,\,\,\,\,\,\,\,\,\,\,\,\,\,\,\,\,\,\,\,\,\,\,\,\,\,\,\,\,\,\,\,\,\,\,\,\,\,\,\,\,\,\,\,\,\,\,\,\,\,\,\,\,\,\,\,\,\,\,\,\,\,\,\,\,\,\,\,\,\mbox{in}\,\,\check{\Omega} $},
  \\& \psi_\sigma=0\,\,\,\,\,\,\,\,\,\,\,\,\,\,\,\,\,\,\,\,\,\,\,\,\,\,\,\,\,\,\,\,\,\,\,\,\,\,\,\,\,\,\,\,\,\,\,\,\,\,\,\,\,\,\,\,\,\,\,\,\,\,\,\,\,\,\,\,\,\,\,\,\,\,\,\,\,\,\,\,\,\,\,\,\,\mbox{on}\,\,\check{\Sigma},\\&\psi_\sigma\para{.,1,.}=e^{i\theta}\psi_\sigma\para{.,0,.}\,\,\,\,\,\,\,\,\,\,\,\,\,\,\,\,\,\,\,\,\,\,\,\,\,\,\,\,\,\,\,\,\,\,\,\,\,\,\,\mbox{in}\,\,\para{0,T}\times\Omega',\\
  &\partial_{x_1}\psi_\sigma\para{.,1,.}=e^{i\theta}\partial_{x_1}\psi_\sigma\para{.,0,.}\,\,\,\,\,\,\,\,\,\,\,\,\,\,\,\,\,\,\,\,\,\,\mbox{in}\,\,\para{0,T}\times\Omega'.
  \end{array}
\right.
$$
Moreover, we have the estimate
\begin{equation}
\sigma\norm{\psi_\sigma}_{L^2\para{\check{Q}}}+\norm{\nabla\psi_\sigma}_{L^2\para{\check{Q}}}\leq C\mathcal{N}_{\omega'}\para{\phi},
\end{equation}
where $C$ depends only on $\check{\Omega},\,T$ and $\norm{A}_{W^{3,\infty}\para{\Omega}}$.
\end{lemma}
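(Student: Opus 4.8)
The plan is to construct $u$ as a WKB-type ansatz whose leading term is the explicit oscillating function $u_0(t,x)=\Phi(2\sigma t,x)\,b(2\sigma t,x)\,e^{i\sigma(x'\cdot\omega'-\sigma t)}$ and then solve for the correction $\psi_\sigma$ by an energy estimate. First I would plug $u_0$ into the operator $i\partial_t+\Delta_A$ and compute the residual $f_\sigma:=-(i\partial_t+\Delta_A)u_0$. The point of the particular choice of amplitude is that the two worst terms — those of order $\sigma^2$ and of order $\sigma$ — cancel: the order-$\sigma^2$ term vanishes because $|\nabla(x'\cdot\omega')|^2=|\omega'|^2=1$ is exactly compensated by the $-\sigma^2$ in the phase after differentiating in $t$ (this is where the $2\sigma t$ time-scaling and the precise phase $x'\cdot\omega'-\sigma t$ are tuned), while the order-$\sigma$ term cancels because $\Phi$ solves the transport equation $(\partial_t+\omega'\cdot\nabla_{x'})\Phi=0$ and $b$ solves $(\partial_t+\omega'\cdot\nabla_{x'}+i\omega'\cdot A')b=0$, precisely the two facts established just before the lemma. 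After these cancellations the residual $f_\sigma$ is $O(1)$ in $\sigma$: it consists of terms like $(\Delta_A$ applied to the slowly varying amplitude$)$, and crucially it is supported (in $x'$) inside $\mathrm{supp}\,\phi_0\pm 2\sigma t\,\omega'$, which by the geometric argument preceding the lemma is disjoint from $\overline{\Omega'}$ for $0\le t\le T$ once $\sigma>(2R+1)/T$ — so $u_0$ itself vanishes near $\check\Sigma$, and the boundary, quasi-periodicity, and initial conditions for $\psi_\sigma$ are inherited correctly. One checks $\|f_\sigma\|_{W^{1,1}(0,T;L^2(\check\Omega))}\le C\,\mathcal N_{\omega'}(\phi)$, with the $H^2$-norms of $\phi$ and of $\omega'\cdot\nabla_{x'}\phi$ entering exactly because $\Delta_A u_0$ produces second derivatives of the amplitude, one of which falls along the direction $\omega'$, and the $t$-derivative needed for the $W^{1,1}$ norm brings in one more such derivative; the assumption $\sigma\ge 1$ lets us absorb lower powers of $\sigma$ into the constant.

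Next I would define $\psi_\sigma:=\mathfrak s_\theta(0)$-type solution, i.e. invoke Lemma \ref{lm-c} with source $f=f_\sigma$ (and zero boundary/initial data, which is legitimate since $u_0$ already carries the inhomogeneous data and $u_0\equiv 0$ on $\check\Sigma$), obtaining a unique $\psi_\sigma\in\check Z_\theta$ with $(i\partial_t+\Delta_A)\psi_\sigma=f_\sigma$ and the stated side conditions. Then $u:=u_0+\psi_\sigma$ solves $(i\partial_t+\Delta_A)u=0$ together with \eqref{(4.1)}. The regularity $u\in\mathcal C^1([0,T];L^2(\check\Omega))\cap\mathcal C([0,T];H^2_\theta(\check\Omega))$ follows from that of $u_0$ (explicit, smooth in $t$, $H^2$ in $x$ since $\phi\in H^2$) together with the regularity of $\psi_\sigma$ coming from Lemma \ref{lm-c}.

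Finally, the quantitative estimate $\sigma\|\psi_\sigma\|_{L^2(\check Q)}+\|\nabla\psi_\sigma\|_{L^2(\check Q)}\le C\,\mathcal N_{\omega'}(\phi)$ comes from the two bounds \eqref{.1} and \eqref{.2} of Lemma \ref{lm-c}: \eqref{.1} gives $\|\psi_\sigma\|_{L^2(\check Q)}\le\|f_\sigma\|_{L^1(0,T;L^2)}$, and applying this to a rescaled problem — or more directly, exploiting that $f_\sigma$ itself is not small but $\psi_\sigma$ can be chosen to cancel a $\sigma^{-1}$-order part — actually one observes that the genuinely $O(1)$ part of $f_\sigma$ still yields only an $O(\sigma^{-1})$ bound on $\psi_\sigma$ after one integration by parts in $t$ (the Duhamel integral against the unitary Schrödinger group picks up a factor $\sigma^{-1}$ from the rapid phase), which is the content one extracts by choosing $\epsilon=\sigma^{-1}$ in \eqref{.2}: this gives $\|\nabla\psi_\sigma\|_{L^2(\check Q)}\le C(\sigma\|f_\sigma\|_{L^1(0,T;L^2)}+2\sigma^{-1}\|f_\sigma'\|_{L^1(0,T;L^2)})$, and combined with the analogous improved bound on $\|\psi_\sigma\|_{L^2}$ one arrives at the claimed inequality with $C=C(\check\Omega,T,\|A\|_{W^{3,\infty}})$. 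The main obstacle is the bookkeeping in the first step: verifying cleanly that the $\sigma^2$ and $\sigma^1$ contributions to the residual vanish identically (not merely to leading order), and tracking which derivatives of $\phi$ — in particular the directional derivative $\omega'\cdot\nabla_{x'}\phi$, explaining the use of the space $\mathcal H^2_{\theta,\omega'}(\mathfrak D_R)$ rather than plain $H^2$ — appear in $f_\sigma$ and in $f_\sigma'$, so that the final constant depends only on $\mathcal N_{\omega'}(\phi)$ and not on $\sigma$.
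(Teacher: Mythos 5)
Your construction is the same as the paper's: write $u=\varphi_\sigma E_\sigma+\psi_\sigma$ with $\varphi_\sigma=\Phi(2\sigma t,\cdot)\,b(2\sigma t,\cdot)$ and $E_\sigma=e^{i\sigma(x'\cdot\omega'-\sigma t)}$, observe that the $\sigma^2$ terms cancel (since $|\omega'|=1$) and the $\sigma$ terms cancel by the transport equations for $\Phi$ and $b$, so that the residual reduces to $G=-E_\sigma\Delta_A\varphi_\sigma$, then solve for $\psi_\sigma$ with homogeneous data by Lemma \ref{lm-c}, using \eqref{.1} and \eqref{.2} with $\epsilon=\sigma^{-1}$. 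Two of your side claims are wrong, though neither is fatal. First, the travelling support $\mathrm{supp}\,\phi_0+2\sigma t\,\omega'$ is \emph{not} disjoint from $\overline{\Omega'}$ for all $t\in[0,T]$: it sweeps across $\Omega'$ at intermediate times (this is essential, since the trace of the principal term on $\check\Sigma$ is precisely the nontrivial input $f_{\sigma,2}$ fed to the DN map in Section 5), so $u_0$ does not vanish on $\check\Sigma$. This does not matter, because the lemma imposes no boundary condition on $u$ itself: $\psi_\sigma$ is simply defined as the solution with zero Dirichlet, initial and inhomogeneity-free quasi-periodic data; the condition $\psi_\sigma(0,\cdot)=0$ is consistent because $\mathrm{supp}\,\phi_0\cap\Omega'=\emptyset$ at $t=0$, and the quasi-periodicity of the principal term follows from $\phi_\theta\in H^2_\theta$ together with the $1$-periodicity of $A$, not from any support consideration. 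Second, the claim $\|G\|_{W^{1,1}(0,T;L^2(\check\Omega))}\le C\,\mathcal N_{\omega'}(\phi)$ with $C$ independent of $\sigma$ is false: differentiating in $t$ brings out factors $\sigma^2$ (from the phase) and $\sigma$ (from the argument $2\sigma t$), so $\|\partial_t G\|_{L^1(0,T;L^2)}$ grows like $\sigma$. The correct accounting, which is also the actual source of the gain, is the substitution $s=2\sigma t$: it gives $\int_0^T\|\Delta_A\varphi_\sigma(2\sigma t,\cdot)\|_{L^2(\check\Omega)}\,dt\le(2\sigma)^{-1}\int_{\R}\|\Delta_A\varphi_\sigma(s,\cdot)\|_{L^2(\check\Omega)}\,ds\le C\sigma^{-1}\mathcal N_{\omega'}(\phi)$, the $s$-integral being uniformly bounded because the amplitude is a translate of a fixed compactly supported profile, and similarly $\|\partial_tG\|_{L^1(0,T;L^2)}\le C\sigma\,\mathcal N_{\omega'}(\phi)$; feeding these into \eqref{.1} and into \eqref{.2} with $\epsilon=\sigma^{-1}$ yields $\sigma\|\psi_\sigma\|_{L^2(\check Q)}+\|\nabla\psi_\sigma\|_{L^2(\check Q)}\le C\mathcal N_{\omega'}(\phi)$ --- not an integration by parts against the rapid phase. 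With these two corrections your argument coincides with the paper's.
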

\begin{proof}
For notational simplicity, we set
$$
E_\sigma\para{t,x'}=e^{i\sigma\para{x'\cdot\omega'-\sigma t}}\,\mbox{and}\,\,\,\, \varphi_\sigma\para{t,x}= \Phi\para{2\sigma t,x}b\para{2\sigma t,x},
$$
so that we have
$$
u=\varphi_\sigma E_\sigma+ \psi_\sigma.
$$
Clearly, $\psi_\sigma$ must be a solution of the following IBVP
\begin{equation}\label{6}
\left\{
  \begin{array}{lll}
  &\para{i\partial_t+\Delta_A}\psi_\sigma=G\,\,\,\,\,\,\,\,\,\,\,\,\,\,\,\,\,\,\,\,\,\,\,\,\,\,\,\,\,\,\,\,\,\,\,\,\,\,\,\,\,\,\,\,\,\,\,\,\,\,\,\,\,\,\,\mbox{in}\,\,\check{Q},
   \\
     &\hbox{$ \psi_\sigma\para{0,.}=0\,\,\,\,\,\,\,\,\,\,\,\,\,\,\,\,\,\,\,\,\,\,\,\,\,\,\,\,\,\,\,\,\,\,\,\,\,\,\,\,\,\,\,\,\,\,\,\,\,\,\,\,\,\,\,\,\,\,\,\,\,\,\,\,\,\,\,\,\,\,\,\,\mbox{in}\,\,\check{\Omega} $},\\ & \psi_\sigma=0\,\,\,\,\,\,\,\,\,\,\,\,\,\,\,\,\,\,\,\,\,\,\,\,\,\,\,\,\,\,\,\,\,\,\,\,\,\,\,\,\,\,\,\,\,\,\,\,\,\,\,\,\,\,\,\,\,\,\,\,\,\,\,\,\,\,\,\,\,\,\,\,\,\,\,\,\,\,\,\,\,\,\,\,\,\mbox{on}\,\,\check{\Sigma},\\&\psi_\sigma\para{.,1,.}=e^{i\theta}\psi_\sigma\para{.,0,.}\,\,\,\,\,\,\,\,\,\,\,\,\,\,\,\,\,\,\,\,\,\,\,\,\,\,\,\,\,\,\,\,\,\,\,\,\,\,\,\mbox{in}\,\,\para{0,T}\times\Omega',\\
  &\partial_{x_1}\psi_\sigma\para{.,1,.}=e^{i\theta}\partial_{x_1}\psi_\sigma\para{.,0,.}\,\,\,\,\,\,\,\,\,\,\,\,\,\,\,\,\,\,\,\,\,\,\mbox{in}\,\,\para{0,T}\times\Omega',
  \end{array}
\right.
\end{equation}
where $G=-\para{i\partial_t+\Delta_A}\para{E_\sigma\varphi_\sigma}$.\\
Since $E_\sigma$ and $\varphi_\sigma$ are the respective solutions of the following two equations
$$
\para{i\partial_t+\Delta_A}E_\sigma=\para{-2\sigma\omega'\cdot A'-\abs{A}^2+i\mbox{div}A}E_\sigma,\,\,\,\,\mbox{in}\,\,\R\times\R^2.
$$
$$
\para{\partial_t+2\sigma\omega'\cdot \nabla_{x'}+2i\sigma\omega'\cdot A'}\varphi_\sigma=0,\,\,\,\,\mbox{in}\,\,\R\times\R^3,
$$
we obtain that  $G=-E_\sigma \Delta_A \varphi_\sigma$.\\We have $G\in H_0^1\para{0,T;L^2\para{\check{\Omega}}}\subset W^{1,1}\para{0,T;L^2\para{\check{\Omega}}}$, by our assumptions. Hence the IBVP $\para{\ref{6}}$ has a unique solution
$$
\psi_\sigma\in\mathcal{C}\para{[0,T];H_\theta^2(\check{\Omega})\cap L^2(0,1;H_0^1\para{\Omega'})} \cap \mathcal{C}^1\para{[0,T];L^2(\check{\Omega})},
$$
by Lemma \ref{lm-c}, and
$$
\begin{array}{lll}
\displaystyle\norm{\psi_\sigma}_{L^2\para{\check{Q}}}&\leq&\displaystyle C\int_0^T\norm{G_0\para{2\sigma t,.}}_{L^2\para{\check{\Omega}}}dt\leq \displaystyle\frac{C}{\sigma} \norm{\phi}_{H^2\para{\para{0,1}\times\mathbb{R}^2}},
\end{array}
$$
where $G_0=\Delta_A\varphi_\sigma$.\\ \\
Moreover, in view of  Lemma  \ref{lm-c}, we have for any $\epsilon>0$
$$
\begin{array}{lll}
\norm{\nabla\psi_\sigma}_{L^2\para{\check{Q}}}&\leq &\displaystyle C\epsilon\int_0^T\para{\sigma^2\norm{G_0\para{2\sigma t,.}}_{L^2\para{\check{\Omega}}}+\sigma\norm{\partial_t G_0\para{2\sigma t,.}}_{L^2\para{\check{\Omega}}}}dt\\&\,&\displaystyle+\epsilon^{-1}\int_0^T \norm{G_0\para{2\sigma t,.}}_{L^2\para{\check{\Omega}}} dt .
\end{array}
$$
Choosing $\epsilon=\sigma^{-1}$ in the above identity we obtain
$$
\begin{array}{lll}
\norm{\nabla\psi_\sigma\para{t}}_{L^2\para{\check{Q}}}&\leq &\displaystyle C\para{\int_\mathbb{R}\norm{G_0\para{s,.}}_{L^2\para{\check{\Omega}}}ds+\int_{\mathbb{R}}\norm{\partial_t G_0\para{s,.}}_{L^2\para{\check{\Omega}}}ds}\\&\leq&\displaystyle C\para{\norm{\phi}_{H^2\para{\para{0,1}\times\mathbb{R}^2}}+\norm{\omega'\cdot\nabla_{x'}\phi}_{H^2\para{\para{0,1}\times\mathbb{R}^2}}},
\end{array}
$$
which completes the proof.
\begin{remark}\label{R3}
We have a similar result by replacing above the condition $\psi_\sigma\para{0,.}=0\,\,\mbox{in}\,\,\check{\Omega}$ by the condition $\psi_\sigma\para{T,.}=0\,\,\mbox{in}\,\,\check{\Omega}.$
\end{remark}
\end{proof}
\section{Stability estimate}
\label{sec-si}
\subsection{Preliminary estimate}
Let $\omega'\in \mathbb{S}^1$ and let $A_j\in W^{3,\infty}\para{\Omega;\mathbb{R}^3}\cap\mathcal{A}$ satisfies \eqref{(1.2)} and $\norm{A_j}_{ W^{3,\infty}}\leq M,$ for $j=1,2$. We extend $A_j$ by zero outside $\Omega$ and still denote by $A_j$ the resulting function. We set $A=\para{a_1,a_2,a_3}=\para{a_1,A'}=A_2-A_1.$ and for $\omega'\in\mathbb{S}$,
$$
b_j\para{t,x}=\exp\para{-i\int_0^t\omega'\cdot A'_j\para{x_1,x'-s\omega'}ds}.
$$
We introduce
$$\displaystyle b\para{t,x}=\para{b_2\overline{b}_1}\para{t,x}=\exp\para{-i\int_0^t\omega'\cdot A'\para{x_1,x'-s\omega'}ds},\,\,\para{t,x}\in\R\times\R^3,$$
where
\begin{equation}\label{5.6}
A'\para{x}=
\left\{
  \begin{array}{lll}&A'_2-A'_1\,\,\,\,\mbox{if $x\in\Omega$}\\& 0\,\,\,\,\,\,\,\,\,\,\,\,\,\,\,\,\,\,\,\,\,\,\,\mbox{if $x\in \R^3\backslash\Omega$}

  \end{array}
\right.
\end{equation}
Since $A'_1-A'_2=0$ on $\partial\Omega$  then $A'=\para{a_2,a_3}$ belongs to $H^1\para{\R^3}$ and we may regard $\frac{\partial a_2}{\partial x_3}-\frac{\partial a_3}{\partial x_2}$ as a function in $L^2\para{\R^3}$ which is supported in $\Omega$.
\begin{lemma}\label{0.1}
We assume that  $\sigma>2R/T$. Then there exists a constant  $C=C\para{M,\Omega'}>0$ such that for any $\omega'\in \mathbb{S}^{1}$, and all $\phi_1,\phi_2\in\mathcal{H}_{\theta,\omega'}^2\para{\mathfrak{D}_R}$, the following estimate holds
\begin{equation}\label{70}
\begin{array}{lll}
\displaystyle \abs{\int_0^T\int_0^1\int_{\mathbb{R}^2}\sigma\omega'\cdot A'\para{x}\para{\phi_2\overline{\phi}_1}\para{x_1,x'-2\sigma t\omega'}b\para{2\sigma t,x}dxdt}\leq\\\,\,\,\,\,\,\,\,\,\,\,\,\,\,\,\,\,\,\,\,\,\,\,\,\,\,\,\,\,\,\,\,\,\,\,\,\,\,\,\,\,\,\,\,\,\,\,\,\,\,\,\,\,\,\,\,\,\,\,\,\,\,\,\,\,\,\,\,\,\,\,\,\,\,\,\,\,\,\,\,\,\,\,\,\,\,\,\,\,\,\,\,\,\,\,\,\,\,\,\,\,\,\,\,\,\,\,\,\,\, C\para{\sigma^2\norm{\Lambda_{A_1,\theta}-\Lambda_{A_2,\theta}}+\sigma^{-1}}\mathcal{N}_{\omega'}\para{\phi_1}\mathcal{N}_{\omega'}\para{\phi_2}.
\end{array}
\end{equation}
\end{lemma}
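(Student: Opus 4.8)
\textbf{Proof plan for Lemma \ref{0.1}.}

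The plan is to test the identity \eqref{88} against the geometric optics solutions constructed in Lemma \ref{1} and Remark \ref{R3}, and then extract the leading-order term in $\sigma$. First I would pick two functions $\phi_1,\phi_2\in\mathcal{H}_{\theta,\omega'}^2(\mathfrak{D}_R)$ and, applying Lemma \ref{1} to $A=A_2$ with parameter $\phi_2$, obtain a solution
$$
u_2(t,x)=\Phi_2(2\sigma t,x)b_2(2\sigma t,x)e^{i\sigma(x'\cdot\omega'-\sigma t)}+\psi_\sigma^{(2)}(t,x)
$$
of $(i\partial_t+\Delta_{A_2})u_2=0$ satisfying $u_2(0,\cdot)=0$, together with the quasi-periodicity conditions. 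Dually, applying Remark \ref{R3} to $A=A_1$ with parameter $\phi_1$, I would obtain a solution
$$
u_1(t,x)=\Phi_1(2\sigma t,x)b_1(2\sigma t,x)e^{i\sigma(x'\cdot\omega'-\sigma t)}+\psi_\sigma^{(1)}(t,x)
$$
of $(i\partial_t+\Delta_{A_1})u_1=0$ satisfying the \emph{final} condition $u_1(T,\cdot)=0$, again quasi-periodic. Here I take $\Phi_j(t,x)=\phi_j(x_1,x'-t\omega')$, so that $\Phi_1\overline{\Phi_2}$ is itself transported along $\omega'$ and the product $(\phi_2\overline{\phi_1})(x_1,x'-2\sigma t\omega')$ appears naturally; the oscillatory factors $e^{i\sigma(x'\cdot\omega'-\sigma t)}$ cancel in $u_2\overline{u_1}$.

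Next I would set $w=u_2$, view it as a solution of the $A_1$-equation with a right-hand side, namely $(i\partial_t+\Delta_{A_1})u_2=(\Delta_{A_1}-\Delta_{A_2})u_2$, and insert this into the Green identity \eqref{88} (with $A=A_1$ there). The boundary term on the right of \eqref{88} is, by the gauge/boundary hypotheses $A_1=A_2$ and $\partial_jA_1=\partial_jA_2$ on $(0,1)\times\partial\Omega'$, expressible through $(\Lambda_{A_2,\theta}-\Lambda_{A_1,\theta})h$ applied to $h=\check\tau u_2$; using the boundedness of $\Lambda_{A_j,\theta}$ (Remark \ref{R2}) and the a priori bound $\|u_2\|\le C(\text{norms of }\phi_2)$ together with the size of the oscillatory data $h$, this term is controlled by $\sigma^2\|\Lambda_{A_1,\theta}-\Lambda_{A_2,\theta}\|\,\mathcal{N}_{\omega'}(\phi_1)\mathcal{N}_{\omega'}(\phi_2)$, which is exactly the first term on the right of \eqref{70}. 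For the left-hand side of \eqref{88}, I would compute $(\Delta_{A_1}-\Delta_{A_2})u_2$ explicitly: since $\Delta_{A}=\Delta+2iA\cdot\nabla+i\,\mathrm{div}(A)-|A|^2$, the difference is a first-order operator $2i(A_1-A_2)\cdot\nabla+i\,\mathrm{div}(A_1-A_2)-(|A_1|^2-|A_2|^2)$. Applied to the leading term of $u_2$, the dominant contribution comes from the gradient hitting the oscillatory exponential, producing a factor $2i\sigma\,\omega'$ contracted with $A_1-A_2=-A$; after pairing with $\overline{u_1}$ and cancelling the exponentials, this yields precisely $-2\sigma\,\omega'\cdot A'(x)\,(\Phi_1\overline{\Phi_2})b$, up to the fact that the $a_1$-component contracted with $\omega'\in\mathbb{S}^1\subset\{0\}\times\R^2$ vanishes, leaving only $A'=(a_2,a_3)$.

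The main obstacle is bookkeeping the remainder terms and showing they are $O(\sigma^{-1})\,\mathcal{N}_{\omega'}(\phi_1)\mathcal{N}_{\omega'}(\phi_2)$. These come from several sources: (i) the non-oscillatory pieces of $(\Delta_{A_1}-\Delta_{A_2})u_2$ (gradient hitting $\Phi_2 b_2$ rather than the exponential, the $\mathrm{div}$ and $|A|^2$ terms), which are $O(1)$ in $\sigma$ and hence $O(\sigma^{-1})$ relative to the $\sigma$-weighted left side of \eqref{70}; (ii) the cross terms in $u_2\overline{u_1}$ involving one correction term $\psi_\sigma^{(j)}$, which by the Lemma \ref{1} estimate satisfy $\sigma\|\psi_\sigma^{(j)}\|_{L^2(\check Q)}+\|\nabla\psi_\sigma^{(j)}\|_{L^2(\check Q)}\le C\mathcal{N}_{\omega'}(\phi_j)$, so that $\sigma\times\psi$ is bounded and the correction contributes $O(1)$, i.e.\ $O(\sigma^{-1})$ after the $\sigma$-normalization; and (iii) the $\psi_\sigma^{(1)}\overline{\psi_\sigma^{(2)}}$ term, which is even smaller. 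I would organize the argument by writing $\sigma\times$(Green identity), moving the leading term to one side, and estimating each of the finitely many remainder terms in $L^2(\check Q)$ using Cauchy--Schwarz, the boundedness of $b_j$ (which follows from $A_j\in L^\infty$), the $W^{3,\infty}$ bounds on $A_j$, and the definition of $\mathcal{N}_{\omega'}$; the factor $b=b_2\overline{b_1}$ in \eqref{70} emerges because $b_2(2\sigma t,x)\overline{b_1(2\sigma t,x)}=b(2\sigma t,x)$ by definition. Care is also needed to check that all the geometric optics solutions are admissible, i.e.\ that their traces lie in $\check X_\theta$, which is where the support condition $\mathrm{supp}\,\phi_0\cap\Omega'=\emptyset$ and the hypothesis $\sigma>2R/T$ enter (ensuring $\mathrm{supp}\,\phi_0\pm\sigma T\omega'$ stays away from $\Omega'$, so the boundary data are unaffected and the initial/final conditions hold).
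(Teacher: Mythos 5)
Your overall strategy is the paper's: pair the forward geometric optics solution $u_2$ for $A_2$ with the backward one $u_1$ for $A_1$ through the Green identity, extract the $\sigma$-leading term produced by $2i(A_2-A_1)\cdot\nabla$ hitting the phase $e^{i\sigma(x'\cdot\omega'-\sigma t)}$, and absorb the remainders using $\sigma\|\psi_{j,\sigma}\|_{L^2(\check Q)}+\|\nabla\psi_{j,\sigma}\|_{L^2(\check Q)}\le C\mathcal{N}_{\omega'}(\phi_j)$. However, there is a genuine gap at the one step that actually produces the Dirichlet-to-Neumann difference. You set $w=u_2$ and ``insert this into \eqref{88}'', asserting that the resulting boundary term is $(\Lambda_{A_2,\theta}-\Lambda_{A_1,\theta})(\check{\tau}u_2)$ paired with $u_1$. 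This fails twice over. First, \eqref{88} was derived under the hypotheses \eqref{87}, in particular $w=0$ on $\check{\Sigma}$, whereas $u_2=f_{\sigma,2}\neq 0$ there; applying the Green formula to $w=u_2$ therefore leaves an additional lateral boundary term involving the Neumann trace of $u_1$, which is not controlled by the data of the problem. Second, even formally, the magnetic Neumann trace of $u_2$ alone is $\Lambda_{A_2,\theta}(f_{\sigma,2})$, not a difference of DN maps: nothing in your choice of $w$ ever sees $\Lambda_{A_1,\theta}$. The missing ingredient is the auxiliary solution $v$ of the $A_1$-problem \eqref{7} with the \emph{same} Dirichlet data $f_{\sigma,2}$ and zero initial condition. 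Only for $w=v-u_2$ does one get $w|_{\check{\Sigma}}=0$, $w(0,\cdot)=0$, $(i\partial_t+\Delta_{A_1})w=2iA\cdot\nabla u_2+Vu_2$ with $A=A_2-A_1$, and, using $A_1=A_2$ on $(0,1)\times\partial\Omega'$, $(\partial_\nu+iA_1\cdot\nu)w=(\Lambda_{A_1,\theta}-\Lambda_{A_2,\theta})(f_{\sigma,2})$ on $\check{\Sigma}$; this is precisely what makes \eqref{88} applicable and puts the operator norm into \eqref{70}.

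Once that auxiliary solution is inserted, the rest of your plan matches the paper: the identification of the leading term $-2\sigma\,\omega'\cdot A'(x)\,(\phi_2\overline{\phi}_1)(x_1,x'-2\sigma t\omega')\,b(2\sigma t,x)$ (with the $a_1$-component dropping out since the phase depends only on $x'$), the $O(\sigma^{-1})\mathcal{N}_{\omega'}(\phi_1)\mathcal{N}_{\omega'}(\phi_2)$ bounds for the cross terms with $\psi_{j,\sigma}$ and for the zeroth-order term $Vu_2\overline{u}_1$, the bound $C\sigma^{2}\mathcal{N}_{\omega'}(\phi_1)\mathcal{N}_{\omega'}(\phi_2)\|\Lambda_{A_1,\theta}-\Lambda_{A_2,\theta}\|$ for the boundary pairing via $\|f_{\sigma,2}\|_{\check{X}_\theta}$, and the role of $\sigma>2R/T$ together with $\mathrm{supp}\,\phi_0\subset\mathfrak{D}_R$ in keeping the principal part of the ansatz away from $\Omega'$ at $t=0$ and $t=T$.
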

\begin{proof}
Let $u_2\in\mathcal{C}^1\para{\cro{0,T};L^2\para{\check{\Omega}}}\cap \mathcal{C}\para{\cro{0,T};H^2_\theta\para{\check{\Omega}}}$ be the geometric optics solution of
$$
\para{i\partial_t+\Delta_{A_2}}u=0,\,\,\,\,\para{t,x}\in \check{Q},
$$
given by Lemma $\ref{1}$, with expression
\begin{equation}\label{71}
u_2\para{t,x}=\Phi_2\para{2\sigma t,x}b_2\para{2\sigma t,x} e^{i\sigma\para{x'\cdot\omega'-\sigma t}}+\psi_{2,\sigma}\para{t,x},
\end{equation}
where $\psi_{2,\sigma}$ satisfies
\begin{equation}
\left\{
  \begin{array}{lll}\label{5.1}
     &\psi_{2,\sigma}\para{0,.}=0\,\,\,\,\,\,\,\,\,\,\,\,\,\,\,\,\,\,\,\,\,\,\,\,\,\,\,\,\,\,\,\,\,\,\,\,\,\,\,\,\,\,\,\,\,\,\,\,\,\,\,\,\,\,\,\,\,\,\,\,\,\,\,\,\,\,\,\,\,\,\,\,\,\,\,\,\mbox{in}\,\,\check{\Omega},\\

     &\psi_{2,\sigma}=0\,\,\,\,\,\,\,\,\,\,\,\,\,\,\,\,\,\,\,\,\,\,\,\,\,\,\,\,\,\,\,\,\,\,\,\,\,\,\,\,\,\,\,\,\,\,\,\,\,\,\,\,\,\,\,\,\,\,\,\,\,\,\,\,\,\,\,\,\,\,\,\,\,\,\,\,\,\,\,\,\,\,\,\,\,\,\,\,\mbox{on}\,\,\check{\Sigma},\\
     &\psi_{2,\sigma}\para{.,1,.}=e^{i\theta}\psi_{2,\sigma}\para{.,0,.}\,\,\,\,\,\,\,\,\,\,\,\,\,\,\,\,\,\,\,\,\,\,\,\,\,\,\,\,\,\,\,\,\,\,\,\,\,\,\,\mbox{in}\,\,\para{0,T}\times\Omega',\\
  &\partial_{x_1}\psi_{2,\sigma}\para{.,1,.}=e^{i\theta}\partial_{x_1}\psi_{2,\sigma}\para{.,0,.}\,\,\,\,\,\,\,\,\,\,\,\,\,\,\,\,\,\,\,\,\,\,\mbox{in}\,\,\para{0,T}\times\Omega',
  \end{array}
\right.
\end{equation}
and
\begin{equation}\label{72}
 \sigma\norm{\psi_{2,\sigma}}_{L^2\para{\check{Q}}}+\norm{\nabla\psi_{2,\sigma}}_{L^2\para{\check{Q}}}\leq C\mathcal{N}_{\omega'}\para{\phi_2}.
 \end{equation}
Put
$$
f_{\sigma,2}\para{t,x}=\Phi_2\para{2\sigma t,x}b_2\para{2\sigma t,x} e^{i\sigma\para{x'\cdot\omega'-\sigma t}},\,\,\,\,x\in\check{\Sigma},\,\,\,t\in\para{0,T},
$$
so that we have $f_{\sigma,2}\in \check{X}_\theta$ and $f_{\sigma,2}=u_2$ on $\check{\Sigma}$. Let $v$ by the $\mathcal{C}^1\para{\cro{0,T};L^2\para{\check{\Omega}}}\cap \mathcal{C}\para{\cro{0,T};H^2_\theta\para{\check{\Omega}}}$ solution of the following IBVP
\begin{equation}\label{7}
\left\{
  \begin{array}{lll}
    & \para{i\partial_t+\Delta_{A_1}}v=0\,\,\,\,\,\,\,\,\,\,\,\,\,\,\,\,\,\,\,\,\,\,\,\,\,\,\,\,\,\,\,\,\,\,\,\,\,\,\,\,\,\,\,\,\,\,\,\,\mbox{in}\,\,\check{Q},\\ &\hbox{$ v\para{0,.}=0\,\,\,\,\,\,\,\,\,\,\,\,\,\,\,\,\,\,\,\,\,\,\,\,\,\,\,\,\,\,\,\,\,\,\,\,\,\,\,\,\,\,\,\,\,\,\,\,\,\,\,\,\,\,\,\,\,\,\,\,\,\,\,\,\,\,\mbox{in}\,\,\check{\Omega} $},\\
     &v=f_{\sigma,2}\,\,\,\,\,\,\,\,\,\,\,\,\,\,\,\,\,\,\,\,\,\,\,\,\,\,\,\,\,\,\,\,\,\,\,\,\,\,\,\,\,\,\,\,\,\,\,\,\,\,\,\,\,\,\,\,\,\,\,\,\,\,\,\,\,\,\,\,\,\,\,\,\mbox{on}\,\,\check{\Sigma},\\
     &\hbox{$ v\para{.,1,.}=e^{i\theta}v\para{.,0,.}\,\,\,\,\,\,\,\,\,\,\,\,\,\,\,\,\,\,\,\,\,\,\,\,\,\,\,\,\,\,\,\,\,\,\,\mbox{in}\,\,\check{\Omega} $},\\ &\hbox{$ \partial_{x_1}v\para{.,1,.}=e^{i\theta}\partial_{x_1}v\para{.,0,.}\,\,\,\,\,\,\,\,\,\,\,\,\,\,\,\,\,\,\mbox{in}\,\,\check{\Omega} $},
  \end{array}
\right.
\end{equation}
given by Remark \ref{R2}. In view of \eqref{5.1} and \eqref{7}, the function $w=v-u_2\in\mathcal{C}\para{\cro{0,T};H^2_{\theta}\para{\check{\Omega}}}\cap L^2\para{0,T;  H_0^1\para{\check{\Omega}}}\cap\mathcal{C}^1\para{\cro{0,T};L^2\para{\check{\Omega}}},
$ satisfies the IBVP
$$
\left\{
  \begin{array}{lll}
    & \para{i\partial_t+\Delta_{A_1}}w=2iA\cdot\nabla u_2+Vu_2\,\,\,\,\,\,\,\,\,\,\,\,\,\,\,\,\,\,\,\,\,\,\,\,\,\,\,\,\,\,\,\,\,\,\,\,\,\,\,\,\,\,\,\,\,\,\,\,\,\,\,\,\,\,\,\,\,\mbox{in}\,\,\check{Q},\\
     &\hbox{$ w\para{0,.}=0\,\,\,\,\,\,\,\,\,\,\,\,\,\,\,\,\,\,\,\,\,\,\,\,\,\,\,\,\,\,\,\,\,\,\,\,\,\,\,\,\,\,\,\,\,\,\,\,\,\,\,\,\,\,\,\,\,\,\,\,\,\,\,\,\,\,\,\,\,\,\,\,\,\,\,\,\,\,\,\,\,\,\,\,\,\,\,\,\,\,\,\,\,\,\,\,\,\,\,\,\,\,\,\,\,\,\,\,\,\,\,\,\,\,\,\,\mbox{in}\,\,\check{\Omega} $},\\
     &w=0\,\,\,\,\,\,\,\,\,\,\,\,\,\,\,\,\,\,\,\,\,\,\,\,\,\,\,\,\,\,\,\,\,\,\,\,\,\,\,\,\,\,\,\,\,\,\,\,\,\,\,\,\,\,\,\,\,\,\,\,\,\,\,\,\,\,\,\,\,\,\,\,\,\,\,\,\,\,\,\,\,\,\,\,\,\,\,\,\,\,\,\,\,\,\,\,\,\,\,\,\,\,\,\,\,\,\,\,\,\,\,\,\,\,\,\,\,\,\,\,\,\,\,\,\,\,\,\,\,\mbox{on}\,\,\check{\Sigma},
     \\
     &\hbox{$ w\para{.,1,.}=e^{i\theta}v\para{.,0,.}\,\,\,\,\,\,\,\,\,\,\,\,\,\,\,\,\,\,\,\,\,\,\,\,\,\,\,\,\,\,\,\,\,\,\,\,\,\,\,\,\,\,\,\,\,\,\,\,\,\,\,\,\,\,\,\,\,\,\,\,\,\,\,\,\,\,\,\,\,\,\,\,\,\,\,\,\,\,\,\,\,\,\,\,\,\,\mbox{in}\,\,\check{\Omega} $},\\ &\hbox{$ \partial_{x_1}w\para{.,1,.}=e^{i\theta}\partial_{x_1}w\para{.,0,.}\,\,\,\,\,\,\,\,\,\,\,\,\,\,\,\,\,\,\,\,\,\,\,\,\,\,\,\,\,\,\,\,\,\,\,\,\,\,\,\,\,\,\,\,\,\,\,\,\,\,\,\,\,\,\,\,\,\,\,\,\,\,\,\,\,\,\,\,\mbox{in}\,\,\check{\Omega} $},
  \end{array}
\right.
$$
with
$$
V=i \,\mbox{div}\para{A} -\abs{A_2}^2+\abs{A_1}^2.
$$
With reference to Remark \ref{R3}, let
\begin{equation}\label{5.7}
u_1\para{t,x}=\Phi_1\para{2\sigma t,x}b_1\para{2\sigma t,x} e^{i\sigma\para{x'\cdot\omega'-\sigma t}}+\psi_{1,\sigma}\para{t,x},
\end{equation}
be a $\mathcal{C}^1\para{\cro{0,T};L^2\para{\check{\Omega}}}\cap \mathcal{C}\para{\cro{0,T};H^2_\theta\para{\check{\Omega}}}$  solution of the equation
$$
\para{i\partial_t+\Delta_{A_1}}u=0,\,\,\,\,\mbox{in}\,\,\check{Q},
$$
where
$\psi_{1,\sigma}$ satisfies
\begin{equation}\label{8}
\left\{
  \begin{array}{lll}
  &\psi_{1,\sigma}\para{T,.}=0\,\,\,\,\,\,\,\,\,\,\,\,\,\,\,\,\,\,\,\,\,\,\,\,\,\,\,\,\,\,\,\,\,\,\,\,\,\,\,\,\,\,\,\,\,\,\,\,\,\,\,\,\,\,\,\,\,\,\,\,\,\,\,\,\,\,\,\,\,\,\,\,\,\,\,\,\mbox{in}\,\,\check{\Omega},
     \\&\psi_{1,\sigma}=0\,\,\,\,\,\,\,\,\,\,\,\,\,\,\,\,\,\,\,\,\,\,\,\,\,\,\,\,\,\,\,\,\,\,\,\,\,\,\,\,\,\,\,\,\,\,\,\,\,\,\,\,\,\,\,\,\,\,\,\,\,\,\,\,\,\,\,\,\,\,\,\,\,\,\,\,\,\,\,\,\,\,\,\,\,\,\,\,\,\mbox{on}\,\,\check{\Sigma},
     \\&\psi_{1,\sigma}\para{.,1,.}=e^{i\theta}\psi_{1,\sigma}\para{.,0,.}\,\,\,\,\,\,\,\,\,\,\,\,\,\,\,\,\,\,\,\,\,\,\,\,\,\,\,\,\,\,\,\,\,\,\,\,\,\,\,\mbox{in}\,\,\para{0,T}\times\Omega',\\
  &\partial_{x_1}\psi_{1,\sigma}\para{.,1,.}=e^{i\theta}\partial_{x_1}\psi_{1,\sigma}\para{.,0,.}\,\,\,\,\,\,\,\,\,\,\,\,\,\,\,\,\,\,\,\,\,\,\mbox{in}\,\,\para{0,T}\times\Omega'.
  \end{array}
\right.
\end{equation}
and
\begin{equation}\label{5.8}
\sigma\norm{\psi_{1,\sigma}}_{L^2\para{\check{Q}}}+\norm{\nabla\psi_{1,\sigma}}_{L^2\para{\check{Q}}}\leq C\mathcal{N}_{\omega'}\para{\phi_1}.
\end{equation}
It follows from identity $\para{\ref{88}}$ that
\begin{equation}\label{cc}
\begin{array}{lll}
\displaystyle\int_{\check{Q}}\para{i\partial_t+\Delta_{A_1}}w\overline{u_1}dx dt&=&\displaystyle\int_{\check{Q}}2iA\cdot \nabla u_2 \overline{u_1}dx dt+\int_{\check{Q}}V\para{x} u_2  \overline{u_1}dx dt\\&=&\displaystyle-\int_{\check{\Sigma}}\para{\partial_\nu+i A_1\cdot \nu}w\overline{u_1}d\sigma dt.
\end{array}
\end{equation}
Since $A=0$ on $\partial \Omega,$ and hence on $\check{\Sigma}$ by \eqref{5.6}, then $\para{\ref{7}}$ and $\para{\ref{cc}}$ give
\begin{equation}\label{5.2}
\begin{array}{lll}
\displaystyle\int_{\check{Q}} 2i A\cdot\nabla u_2 \overline{u_1}dx dt+\int_{\check{Q}} V\para{x} u_2 \overline{u_1}dx dt&=&\displaystyle-\int_{\check{\Sigma}} \para{\Lambda_{A_1,\theta}-\Lambda_{A_2,\theta}}\para{f_{\sigma,2}}\overline{f_{\sigma,1}} d\sigma dt\\&=&\displaystyle -\langle\para{\Lambda_{A_1,\theta}-\Lambda_{A_2,\theta}}\para{f_{\sigma,2}},\overline{f_{\sigma,1}}\rangle,
\end{array}
\end{equation}
where we have set
$$
f_{\sigma,1}\para{t,x}=\Phi_1\para{2\sigma t,x} b_1\para{2\sigma t,x} e^{i\sigma\para{x'\cdot\omega'-\sigma t}},\,\,\,\,\,\,\para{t,x}\in\check{\Sigma}.
$$
On the other hand by \eqref{71} and \eqref{5.7} we have
\begin{equation}\label{5.3}
\displaystyle\int_{\check{Q}} 2iA\cdot \nabla u_2\overline{u_1}dx dt=\displaystyle -\int_{\check{Q}} 2\sigma \omega' \cdot A'\para{x}\para{\phi_2 \overline{\phi}_1}\para{x_1,x'-2\sigma t\omega'}\para{b_2 \overline{b}_1}\para{2\sigma t,x} dx dt+\mathcal{I}_{\sigma},
\end{equation}
where
$$
\begin{array}{lll}
 \mathcal{I}_{\sigma} &=&\displaystyle\int_{\check{Q}}2iA\cdot \nabla\para{\Phi_2\para{2\sigma t,x}b_2\para{2\sigma t,x}}\overline{\Phi}_1 \para{2\sigma t,x} \overline{b}_1\para{2\sigma t,x} dx dt \\&\,&+ \displaystyle\int_{\check{Q}} 2iA\cdot \nabla\para{\Phi_2\para{2\sigma t,x}b_2\para{2\sigma t,x}}e^{i\sigma\para{x'\cdot\omega'-\sigma t}}\overline{\psi}_{1,\sigma}\para{t,x}dx dt\\&\,&+ \displaystyle\int_{\check{Q}}2iA\cdot \nabla\psi_{2,\sigma} \para{2\sigma t,x} \overline{\Phi}_1\para{2\sigma t,x} \overline{b}_1\para{2\sigma t,x} e^{i\sigma\para{x'\cdot\omega'-\sigma t}}\\&\,&\displaystyle +\int_{\check{Q}}2iA\cdot\nabla \psi_{2,\sigma}\para{t,x} \overline{\psi}_{1,\sigma}\para{t,x}dx dt\\&\,&-\displaystyle \int_{\check{Q}} 2\sigma \omega'\cdot A'\para{x}b_2\para{2\sigma t,x} \Phi_2\para{2\sigma t,x} \overline{\psi}_{1,\sigma} e^{i\sigma\para{x'\cdot\omega'-\sigma t}} dx dt
 \end{array}
$$
Using \eqref{72} and \eqref{5.8}, we obtain  that
\begin{equation}\label{5.5}
\abs{\mathcal{I}_{\sigma}}\leq C\sigma^{-1}\mathcal{N}_{\omega'}\para{\phi_2}\mathcal{N}_{\omega'}\para{\phi_1}.
\end{equation}
From this and \eqref{5.2}-\eqref{5.3}, it follows that
\begin{eqnarray}\label{9.1}
&&\abs{\displaystyle \int_{\check{Q}}\sigma\omega'\cdot A'\para{x}\para{\phi_2\overline{\phi}_1}\para{x_1,x'-2\sigma t\omega'}\para{b_2  \overline{b}_1}\para{2\sigma t,x}dxdt}\cr&&\leq\displaystyle C \para{\abs{\int_{\check{Q}} V u_2\overline{u}_1dxdt}+\displaystyle\abs{\int_{\Sigma_1}\para{\Lambda_{A_1,\,\theta}-\Lambda_{A_2,\,\theta}}\para{f_{\sigma,2}}\overline{f_{\sigma,1}}d\sigma dt} +\sigma^{-1}\mathcal{N}_{\omega'}\para{\phi_2}\mathcal{N}_{\omega'}\para{\phi_1}}.
\end{eqnarray}
On the other  hand \eqref{71}, \eqref{72}, \eqref{5.7} and \eqref{5.8} imply that
\begin{equation}\label{9.2}
\abs{\int_{\check{Q}} V\para{x}u_2\overline{u}_1 dx dt}\leq C\sigma^{-2}\mathcal{N}_{\omega'}\para{\phi_2}\mathcal{N}_{\omega'}\para{\phi_1}\leq C\sigma^{-1}\mathcal{N}_{\omega'}\para{\phi_2}\mathcal{N}_{\omega'}\para{\phi_1},
\end{equation}
and we have
\begin{eqnarray}\label{9.3}
\abs{\int_{\check{\Sigma}}\para{\Lambda_{A_1,\theta}-\Lambda_{A_2,\theta}}\para{f_{\sigma,2}}\overline{f_{\sigma,1}}d\sigma dt}&=&\displaystyle \abs{\langle\para{\Lambda_{A_1,\theta}-\Lambda_{A_2,\theta}}\para{f_{\sigma,2}},f_{\sigma,1}\rangle_{L^2\para{{\check{\Sigma}}}}}\cr&\leq&\norm{(\Lambda_{A_1,\theta}-\Lambda_{A_2,\theta})(f_{\sigma,2})}_{L^2\para{{\check{\Sigma}}}}\norm{f_{\sigma,1}}_{L^2\para{{\check{\Sigma}}}}\cr&\leq&\norm{\Lambda_{A_1,\theta}-\Lambda_{A_2,\theta}}\norm{f_{\sigma,2}}_{\check{X}_\theta}\norm{f_{\sigma,1}}_{L^2\para{{\check{\Sigma}}}}\cr&\leq&C\sigma^{2}\mathcal{N}_{\omega'}\para{\phi_2}\mathcal{N}_{\omega'}\para{\phi_1}\norm{\Lambda_{A_1,\theta}-\Lambda_{A_2,\theta}}.
\end{eqnarray}
From   \eqref{9.1}-\eqref{9.2} and \eqref{9.3}, we derive that
\begin{eqnarray}
&&\abs{\int_0^T\int_0^1\int_{\mathbb{R}^2}\sigma\omega'\cdot A'\para{x}\para{\phi_2 \overline{\phi}_1}\para{x_1,x'-2\sigma t\omega'}b\para{2\sigma t,x}dx dt}\cr&&\leq C\para{\sigma^{2}\mathcal{N}_{\omega'}\para{\phi_1}\mathcal{N}_{\omega'}\para{\phi_2}\norm{\Lambda_{A_1,\theta}-\Lambda_{A_2,\theta}} +\sigma^{-1}\mathcal{N}_{\omega'}\para{\phi_1}\mathcal{N}_{\omega'}\para{\phi_2}}.
\end{eqnarray}
This completes the proof of the lemma.
\end{proof}
\subsection{Two technical results}
Let $f$ be a function defined on $\R^3$ and let $\omega'\in \mathbb{S}^1$. We define the X-ray transform of $f$ at $x$ in the direction $\omega'$ by
$$
(\mathcal{P}f)\para{\omega',x}=\int_\mathbb{R} f\para{x_1,x'+s\omega'}ds,\,\,x=\para{x_1,x'}\in \mathbb{R}^3.
$$
We observe that $\para{\mathcal{P}f}\para{\omega',x}$ where $x=\para{x_1,x'}$ does not change if $x'$ is moved in the direction of $\omega'$. Therefore we restrict  $x'$ to $
 \omega'^{\perp}=\{\kappa \in \mathbb{R}^2;\,\,\kappa\cdot\omega'=0\}$. Hereafter, $\widehat{f}$ denotes the partial Fourier transform of the function $f$ with respect to the cross-section variable $x'\in\Omega',$ i.e
$$
\widehat{f}\para{x_1,\xi'}=\para{2\pi}^{-1}\int_{\R^2} f\para{x_1,x'} e^{-ix'\cdot\xi'} dx',\,\,\,\,\xi'\in \mathbb{R}^2,\,x_1\in\R.
$$
\begin{lemma}\label{0.33}
Let $f\in L^1\para{\mathbb{R}^3}$ and $\omega'\in \mathbb{S}^1$. Then  $(\mathcal{P}f)\para{\omega',.}\in L^1\para{\mathbb{R}\times{\omega'}^{\perp}}$ and
$$
\widehat{\para{(\mathcal{P}f)\para{\omega',.}}}\para{x_1,\xi'}=\para{2\pi}^{-1}\int_{\omega'^{\perp}} e^{-ix'\cdot \xi'} (\mathcal{P}f)\para{\omega',x_1,x'}dx'=\widehat{f}\para{x_1,\xi'},
$$
for all $\xi'\in {\omega'}^{\perp}$.
\end{lemma}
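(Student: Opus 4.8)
The plan is to reduce the statement to the classical Fourier slice theorem applied fiber-wise in the transverse variable $x' \in \R^2$, treating $x_1$ as an inert parameter. First I would verify the integrability claim: by Tonelli's theorem applied to $|f| \in L^1(\R^3)$, for a.e.\ $x_1 \in \R$ the function $x' \mapsto f(x_1,x')$ lies in $L^1(\R^2)$, and then for such $x_1$ the two-dimensional X-ray (Radon-type) transform $x' \mapsto (\mathcal P f)(\omega',x_1,x')$ is defined for a.e.\ $x' \in {\omega'}^\perp$ and satisfies $\int_{{\omega'}^\perp} |(\mathcal P f)(\omega',x_1,x')|\,dx' \le \int_{\R^2} |f(x_1,x')|\,dx'$. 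Integrating once more in $x_1$ gives $(\mathcal P f)(\omega',\cdot) \in L^1(\R \times {\omega'}^\perp)$, which is the first assertion.

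For the identity, fix $x_1$ (outside the null set above) and $\xi' \in {\omega'}^\perp$. I would parametrize $\R^2 = {\omega'}^\perp \oplus \R\omega'$, writing $x' = y' + s\omega'$ with $y' \in {\omega'}^\perp$ and $s \in \R$; this change of variables is an isometry of $\R^2$, so $dx' = dy'\,ds$. Then
$$
\widehat{f}(x_1,\xi') = (2\pi)^{-1}\int_{\R^2} f(x_1,x')e^{-ix'\cdot\xi'}\,dx'
= (2\pi)^{-1}\int_{{\omega'}^\perp}\int_{\R} f(x_1,y'+s\omega')e^{-i(y'+s\omega')\cdot\xi'}\,ds\,dy'.
$$
Because $\xi' \in {\omega'}^\perp$ we have $(s\omega')\cdot\xi' = 0$, so $e^{-i(y'+s\omega')\cdot\xi'} = e^{-iy'\cdot\xi'}$ is independent of $s$; pulling it out of the inner integral and recognizing $\int_\R f(x_1,y'+s\omega')\,ds = (\mathcal P f)(\omega',x_1,y')$ yields
$$
\widehat{f}(x_1,\xi') = (2\pi)^{-1}\int_{{\omega'}^\perp} e^{-iy'\cdot\xi'}(\mathcal P f)(\omega',x_1,y')\,dy' = \widehat{(\mathcal P f)(\omega',\cdot)}(x_1,\xi'),
$$
which is exactly the claimed equality (after renaming $y'$ to $x'$). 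The observation recorded just before the lemma — that $(\mathcal P f)(\omega',x_1,x')$ is invariant under translating $x'$ along $\omega'$ — is what makes the restriction of $\widehat{(\mathcal P f)}$ to ${\omega'}^\perp$ meaningful and is used implicitly here.

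I do not expect a serious obstacle: the only point requiring a little care is the measure-theoretic bookkeeping — ensuring that the $x_1$-sections of $f$ are genuinely $L^1(\R^2)$ for a.e.\ $x_1$ so that Fubini applies and the inner $s$-integral converges absolutely — but this is immediate from $f \in L^1(\R^3)$ via Tonelli. Everything else is the two-dimensional Fourier slice theorem plus the orthogonal decomposition $\R^2 = {\omega'}^\perp \oplus \R\omega'$.
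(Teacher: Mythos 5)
Your proposal is correct and follows essentially the same route as the paper: the $L^1$ bound via Tonelli and the orthogonal decomposition $\R^2={\omega'}^{\perp}\oplus\R\omega'$, with the key observation that $\xi'\in{\omega'}^{\perp}$ makes the phase $e^{-ix'\cdot\xi'}$ independent of the $\omega'$-component. The only cosmetic difference is that you compute starting from $\widehat{f}$ and arrive at $\widehat{(\mathcal{P}f)(\omega',\cdot)}$, whereas the paper runs the same change of variables in the opposite direction.
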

\begin{proof}
Obviously
$$
\int_{\R\times{\omega'}^{\perp}} \abs{(\mathcal{P}f)\para{\omega',x}}dx \leq \int_{\R\times{\omega'}^{\perp}}\int_{\R} \abs{f\para{x_1,x'+t\omega'}}dtdx_1dx'=\int_{\R^3} \abs{f\para{x}}dx<\infty.
$$
The change of variable $y'=x'+t\omega'\in{\omega'}^{\perp}\oplus \mathbb{R}\omega'$, yields  $dy'=dx'dt $ and after noting that $\xi'\in{\omega'}^{\perp}$ implies $x'\cdot \xi'=x'\cdot \xi'+t\omega'\cdot\xi'=y'\cdot \xi'$,
$$
\begin{array}{lll}
\widehat{\para{(\mathcal{P}f)\para{\omega',.}}}\para{x_1,\xi'}&=&\displaystyle\para{2\pi}^{-1}\int_{\omega'^{\perp}} \int_{\R}f\para{x_1,x'+t\omega'}e^{-ix'\cdot \xi'}dtdx'\\&=& \displaystyle\para{2\pi}^{-1}\int_{\R\times\omega'^{\perp}}f\para{x_1,x'+t\omega'}e^{-ix'\cdot \xi'}dtdx'\\&=&\displaystyle\para{2\pi}^{-1}\int_{\mathbb{R}^2} f\para{x_1,y'}e^{-iy'\cdot \xi'}dy'=\widehat{f}\para{x_1,\xi'}.
\end{array}
$$
\end{proof}
For $j=1,2,3$ we introduce the notation
\begin{equation}\label{6.2}
\rho_j\para{x}=\omega'\cdot \frac{\partial A'}{\partial  x_j}\para{x}=\sum_{i=2}^3 \omega_i  \frac{\partial a_i}{\partial  x_j}\para{x},\,\,\,x\in \mathbb{R}^3.
\end{equation}
Next for $\omega'=\para{\omega_2,\omega_3}\in \mathbb{S}^1$, we put
$$
\mathfrak{D}_R^-\para{\omega'}=\{x'\in\mathfrak{D}_R,\,x'\cdot \omega' <0\}.
$$
Let us now prove the following Lemma.
\begin{lemma}\label{6.4}
  Put $\sigma_0=2(R+1)/T.$ then there exists a constant $C=C\para{A_1,M}>0$ such that for all  $\omega'\in \mathbb{S}^1$ and all $\phi=\phi_{\theta}\phi_0\in \mathcal{H}_{\omega',\theta}^2\para{\mathfrak{D}_R}$ satisfying  $\mbox{supp}\para{\phi_0}\subset \mathfrak{D}_R^-$ and $\partial_j\phi\in\mathcal{H}_{\omega',\theta}^2\para{\mathfrak{D}_R}$ for $j\in\{2,3\}$, the following estimate
$$
\abs{\int_0^1e^{-i2k\pi x_1}\int_{\R^2}\phi^2\para{x}(\mathcal{P}\rho_j)\para{\omega',x}\exp\para{-i\int_{\mathbb{R}}\omega'\cdot A'\para{x_1,x'+s\omega'}ds}dx} \leq
$$
$$
\,\,\,\,\,\,\,\,\,\,\,\,\,\,\,\,\,\,\,\,\,\,\,\,\,\,\,\,\,\,\,\,\,\,\,\,\,\,\,\,\,\,\,\,\,\,\,\,\,\,\,\,\,\,\,\,\,\,\,\,\,\,\,\,\,\,\,\,\,\,\,\,\,\,\,\,\,\,\,\,\,\,\,\,\,\,\,\,\,\,\,\,\,\,\,\,\,\,\,\,\,\,\,\,\,\,\,\,\,\,\,\,\,\,\,\,\,\,\,\,\,\,\,\,\,\,\,\,C\para{\sigma^2\norm{\Lambda_{A_1,\theta}-\Lambda_{A_2,\theta}}+\frac{1}{\sigma}}\mathcal{N}_{\omega'}\para{\phi}\mathcal{N}_{\omega'}\para{\partial_j\phi},
$$
holds for any $\sigma>\sigma_0,\,k\in\mathbb{Z}$ and $j=2,3.$
\end{lemma}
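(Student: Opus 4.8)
The goal is to pass from the oscillatory-integral estimate of Lemma \ref{0.1} to a statement about the X-ray transform $\mathcal{P}\rho_j$, for test functions $\phi$ whose $x'$-support lies in the ``backward'' piece $\mathfrak{D}_R^-(\omega')$. The key geometric observation is that if $\mathrm{supp}(\phi_0)\subset\mathfrak{D}_R^-(\omega')$, then for $x'\in\mathrm{supp}(\phi_0)$ and $\sigma>\sigma_0=2(R+1)/T$ the translate $x'-2\sigma t\omega'$ moves further away from $\Omega'$ as $t$ increases (since $x'\cdot\omega'<0$), and in fact for all $t\in(0,T)$ the whole segment stays outside $\overline{B(0,R)}$ once it has left; more to the point, the substitution $y'=x'-2\sigma t\omega'$ will, as $t$ ranges over $(0,T)$, sweep out a long segment in the direction $-\omega'$. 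This is exactly the mechanism that converts the $t$-integral in Lemma \ref{0.1} into (a piece of) the line integral defining $\mathcal{P}$.

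\medskip

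The plan is as follows. First I would apply Lemma \ref{0.1} with the specific choices $\phi_1=\partial_j\phi$ and $\phi_2=\phi$ (using the hypothesis that both lie in $\mathcal{H}^2_{\omega',\theta}(\mathfrak{D}_R)$), so that the left-hand side becomes
$$
\abs{\int_0^T\!\!\int_0^1\!\!\int_{\R^2}\sigma\,\omega'\cdot A'(x)\,\para{\phi\,\overline{\partial_j\phi}}(x_1,x'-2\sigma t\omega')\,b(2\sigma t,x)\,dx\,dt}\le C\para{\sigma^2\norm{\Lambda_{A_1,\theta}-\Lambda_{A_2,\theta}}+\sigma^{-1}}\mathcal{N}_{\omega'}(\phi)\mathcal{N}_{\omega'}(\partial_j\phi).
$$
Next, the integration by parts in $x'$ along the direction $\omega'$: since $b$ satisfies the transport equation $(\partial_t+\omega'\cdot\nabla_{x'}+i\omega'\cdot A')b=0$, one has $\omega'\cdot A'(x)\,b(2\sigma t,x)=\frac{i}{2\sigma}\partial_t\big(b(2\sigma t,x)\big)-\frac1{2\sigma}\,\omega'\cdot\nabla_{x'}b(2\sigma t,x)$... — rather than that route, the cleaner move is the change of variables $y'=x'-2\sigma t\omega'$ in the $x'$-integral for fixed $t$, followed by recognizing that $\int_0^T(\cdot)\,2\sigma\,dt$ together with the $t$-translation builds the line integral. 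Concretely, writing $g(x_1,y')=\omega'\cdot A'(x_1,y')\exp\para{-i\int_{\R}\omega'\cdot A'(x_1,y'+s\omega')ds}$ up to an error controlled by the decay of $A'$ off $\Omega$, the substitution turns the triple integral into $\tfrac12\int_0^1\!\int_{\omega'^\perp}(\phi\,\overline{\partial_j\phi})(x_1,y')\,(\mathcal P\rho_j)(\omega',x_1,y')\,(\cdots)$ — here one uses that $\omega'\cdot A'(x)\,(\cdots)$ is, after the $t$-integration, the derivative structure $\partial_j$ hitting $A'$ inside the exponential phase, i.e. $\rho_j=\omega'\cdot\partial_j A'$, exactly as defined in \eqref{6.2}; and then multiply by $e^{-2ik\pi x_1}$ and integrate in $x_1\in(0,1)$, which is permissible because all data are $1$-periodic in $x_1$.

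\medskip

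The bookkeeping I expect to need: (i) the fact that $\phi\,\overline{\partial_j\phi}=\tfrac12\partial_j(\phi^2)$ when $\phi$ is real is what ultimately produces $\phi^2(\mathcal{P}\rho_j)$ rather than $(\mathcal{P}\rho_j)$ times a product of two different functions — so I would take $\phi_\theta,\phi_0$ real-valued, or else carry the polarization through; (ii) the support condition $\mathrm{supp}(\phi_0)\subset\mathfrak{D}_R^-(\omega')$ together with $\sigma>\sigma_0$ guarantees, exactly as in the displayed estimate preceding Lemma \ref{1}, that the translated supports never re-enter $\Omega'$, so that extending $A'$ by zero and replacing $\int_0^{2\sigma T}$ by $\int_{\R}$ in the phase integral costs nothing; (iii) Lemma \ref{0.33} is held in reserve for the next step (inverting $\mathcal P$ via the Fourier slice identity), not needed here. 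The resulting constant depends on $A_1$ through $b_1$ and on $M$ through the $W^{3,\infty}$ bound on $A'=A_2-A_1$, matching the claim $C=C(A_1,M)$.

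\medskip

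The main obstacle is the second step: carefully justifying that the $t$-integral of $\sigma\,\omega'\cdot A'(x)\,b(2\sigma t,x)$ against the translated amplitude reproduces precisely $\tfrac12(\mathcal P\rho_j)(\omega',x)$ times the full exponential weight $\exp\big(-i\int_\R\omega'\cdot A'(x_1,x'+s\omega')ds\big)$, with the $\partial_j$ correctly distributed. This requires integrating by parts in $t$ (moving $\sigma\,dt$ onto an antiderivative), using the transport equations for $b$ and $\Phi$ recorded in the proof of Lemma \ref{1} to kill boundary terms at $t=0$ and $t=T$ (vanishing because the supports have been pushed outside $\Omega'$), and tracking that the $\partial_j$ falls on the amplitude $\phi^2$ rather than on $b$ or on the phase — the latter contributions either vanish or are absorbed into the $\mathcal{I}_\sigma$-type remainder already shown to be $O(\sigma^{-1})$ in Lemma \ref{0.1}. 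Once this identification is in place, the stated inequality is immediate from the bound in Lemma \ref{0.1}.
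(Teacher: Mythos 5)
Your plan follows the paper's proof essentially step for step: apply Lemma \ref{0.1} with $\phi_1$ a $j$-th derivative of $\phi$ and $\phi_2$ equal to $\phi$ times the character $e^{-i2k\pi x_1}$, rewrite the $t$-integral as the integral of a total derivative so that it evaluates to the exponential phase $\exp\bigl(-i\int_0^{2\sigma T}\omega'\cdot A'(x_1,x'+s\omega')\,ds\bigr)-1$, integrate by parts in $x_j$ so that the derivative of this phase produces $\mathcal{P}\rho_j$, and use $\mathrm{supp}(\phi_0)\subset\mathfrak{D}_R^-(\omega')$ together with $\sigma>\sigma_0$ to replace $\int_0^{2\sigma T}$ by $\int_{\mathbb{R}}$. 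Two small repairs to your bookkeeping: the factor $e^{-i2k\pi x_1}$ must be absorbed into $\phi_2$ \emph{before} invoking Lemma \ref{0.1} (one cannot insert an oscillating factor into an already-bounded integral afterwards; this is legitimate since $e^{-i2k\pi x_1}\phi_\theta$ is still in $H^2_\theta$), and choosing $\phi_1=\partial_j\overline{\phi}$ gives $\phi_2\overline{\phi_1}=\tfrac12 e^{-i2k\pi x_1}\partial_j(\phi^2)$ for complex $\phi$, so no realness assumption on $\phi$ is needed.
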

\begin{proof}
For $\phi_1,\phi_2\in \mathcal{H}_{\omega',\theta}^2\para{\mathfrak{D}_R}$, we have
\begin{eqnarray}\label{6.3}
&&\int_0^T\int_0^1\int_{\mathbb{R}^2}\sigma \omega'\cdot A'\para{x}\para{\phi_2 \overline{\phi}_1}\para{x_1,x'-2\sigma t\omega' }b\para{2\sigma t, x} dx'dx_1dt \cr&=&\int_0^T\int_0^1\int_{\mathbb{R}^2}\sigma \omega'\cdot A'\para{x_1,x'+2\sigma t\omega'}\para{\phi_2 \overline{\phi}_1}\para{x}b\para{2\sigma t,x_1, x'+2\sigma t\omega'}dx'dx_1dt\cr&=&\displaystyle\int_0^1\int_{\mathbb{R}^2}\para{\phi_2 \overline{\phi}_1}\para{x}\int_0^T\sigma \omega'\cdot A'\para{x_1,x'+2\sigma t\omega'} \exp\para{-i\int_0^{2\sigma t} \omega'\cdot A'\para{x_1,x'+s\omega'}ds}dtdx'dx_1 \cr&=&\displaystyle\frac{i}{2}\int_0^1\int_{\mathbb{R}^2}\para{\phi_2 \overline{\phi}_1}\para{x}\int_0^T \frac{d}{dt} \exp\para{-i\int_0^{2\sigma t} \omega'\cdot A'\para{x_1,x'+s\omega'}ds}dtdx_1dx'\cr&=&\displaystyle\frac{i}{2}\int_0^1\int_{\mathbb{R}^2}\para{\phi_2 \overline{\phi}_1}\para{x}\cro{ \exp\para{-i\int_0^{2\sigma T} \omega'\cdot A'\para{x_1,x'+s\omega'}ds}-1}dx.
\end{eqnarray}
 We choose $\phi_1$ and $\phi_2$ such that $\phi_2\para{x}=e^{-i2k\pi x_1}\phi\para{x}$, $\phi_1=\partial_j \overline{\phi},\,j\in\{2,3\}$. By an application of Green's formula, $\para{\ref{6.3}}$ yields
\begin{eqnarray}\label{5.9}
\displaystyle&&\int_0^T\int_0^1\int_{\mathbb{R}^2}\sigma \omega'\cdot A'\para{x}\para{\phi_2 \overline{\phi}_1}\para{x_1,x'-2\sigma t\omega' }b\para{2\sigma t, x} dx dt \displaystyle \cr&=& -\frac{i}{4}\int_0^1\int_{\mathbb{R}^2}e^{-i2k\pi x_1} \phi^2\para{x}\frac{\partial }{\partial x_j}\cro{ \exp\para{-i\int_0^{2\sigma T} \omega'\cdot A'\para{x_1,x'+s\omega'}ds}}dx\cr&=& -\frac{1}{4}\int_0^1\int_{\mathbb{R}^2}e^{-i2k\pi x_1}\phi^2\para{x}\frac{\partial }{\partial x_j}\para{\int_0^{2\sigma T} \omega'\cdot A'\para{x_1,x'+s\omega'}ds}\cr&\,&\,\,\,\,\,\,\,\,\,\,\,\,\,\,\,\,\,\,\,\,\,\,\,\,\,\,\,\,\,\,\,\,\,\,\,\,\,\,\,\,\,\,\,\,\,\,\,\,\,\,\,\,\,\,\,\,\,\,\,\,\,\,\,\,\,\,\,\,\,\,\,\,\,\,\,\,\,\,\,\,\,\,\,\,\,\,\,\,\,\,\,\,\,\,\,\,\,\,\,\,\,\,\,\,\,\,\,\,\,\,\,\,\,\,\,\,\,\,\,\,\,\,\,\,\,\,\,\,\,\,\,\,\,\,\,\,\,\,\,\,\,\,\, \exp\para{-i\int_0^{2\sigma T} \omega'\cdot A'\para{x_1,x'+s\omega'}ds}dx.
\end{eqnarray}
Since the support of $A'$ is contained in $\R\times B\para{0,R}$ and $2\sigma T>R$ we have
\begin{equation}\label{6.1}
\int_0^{2\sigma T} \omega'\cdot A'\para{x_1,x'+s\omega'}ds=\int_\mathbb{R} \omega'\cdot A'\para{x_1,x'+s\omega'}ds,
\end{equation}
for all $x'\in\mathfrak{D}_R^-\para{\omega}\,$. In fact, for all $s\geq 2\sigma T$ and $x'\in\mathfrak{D}_{R}$ it is easy to see that
$\para{x_1,x'+s\omega'} \notin \mbox{supp}\para{A'},$ for each $x_1\in [0,1]$. Therefore we have
\begin{equation}\label{73}
\int_0^{2\sigma T} \omega'\cdot A'\para{x_1,x'+s\omega'}ds=\int_0^\infty \omega'\cdot A'\para{x_1,x'+s\omega'}ds,\,\,\para{x_1,x'}\in [0,1]\times\mathfrak{D}_R^-\para{\omega'}.
\end{equation}
On the other hand, if $s\leq 0$ and $x'\in\mathfrak{D}_R^-$ it holds true that $\abs{x'+s\omega'}^2=\abs{x'}^2+s^2+2sx'\cdot \omega'\geq R^2$ hence $A\para{x_1,x'+s\omega'}=0$. This and \eqref{73} entail $\para{\ref{6.1}}$. Further, upon inserting $\para{\ref{6.1}}$ into the equation \eqref{5.9}, we obtain
$$
\begin{array}{lll}
\displaystyle\int_0^T\int_0^1\int_{\mathbb{R}^2}\sigma \omega'\cdot A'\para{x}\para{\phi_2 \overline{\phi}_1}\para{x_1,x'-2\sigma t\omega'}b\para{2\sigma t, x}dxdt\\= \displaystyle-\frac{1}{4}\int_0^1\int_{\mathbb{R}^2}e^{-i2k\pi x_1}\phi^2\para{x}\frac{\partial }{\partial x_j}\para{\int_{\mathbb{R}}\omega'\cdot A'\para{x_1,x'+s\omega'}ds} \exp\para{-i\int_{\mathbb{R}}\omega'\cdot A'\para{x_1,x'+s\omega'}ds}dx\\=\displaystyle-\frac{1}{4}\int_0^1\int_{\mathbb{R}^2} e^{-i2k\pi x_1}\phi^2\para{x}\mathcal{P}\para{\rho_j}\para{\omega',x} \exp\para{-i\int_{\mathbb{R}} \omega'\cdot A'\para{x_1,x'+s\omega'}ds}dx,
\end{array}
$$
where $\rho_j$ is given by $\para{\ref{6.2}}$. From this and Lemma $\ref{0.1}$, we obtain for any $\sigma>\sigma_0$ that
$$
\abs{\int_0^1e^{-i2k\pi x_1}\int_{\mathbb{R}^2}\phi^2\para{x}\mathcal{P}\para{\rho_j}\para{\omega',x}\exp\para{-i\int_{\mathbb{R}}\omega'\cdot A'\para{x_1,x'+s\omega'}ds}dx'dx_1} \leq
$$
$$
\,\,\,\,\,\,\,\,\,\,\,\,\,\,\,\,\,\,\,\,\,\,\,\,\,\,\,\,\,\,\,\,\,\,\,\,\,\,\,\,\,\,\,\,\,\,\,\,\,\,\,\,\,\,\,\,\,\,\,\,\,\,\,\,\,\,\,\,\,\,\,\,\,\,\,\,\,\,\,\,\,\,\,\,\,\,\,\,\,\,\,\,\,\,\,\,\,\,\,\,\,\,\,\,\,\,\,\,\,\,\,\,\,\,\,\,\,\,\,\,\,\,\,\,\,\,\,\,C\para{\sigma^2\norm{\Lambda_{A_1,\theta}-\Lambda_{A_2,\theta}}+\frac{1}{\sigma}}\mathcal{N}_{\omega'}\para{\phi}\mathcal{N}_{\omega'}\para{\partial_j\phi}.
$$
The proof is then complete.
\end{proof}
\subsection{Estimate for the magnetic potential}
Let us now prove the estimate of Theorem \ref{thmm}.
We start by estimating the Fourier transform of the function $\beta_{23}$ where
$$
\beta_{ij}=\frac{\partial a_i}{\partial x_j}-\frac{\partial a_j}{\partial x_i},\,\,\,\,i,j=1,2,3.
$$
\begin{lemma}\label{lm.2}
 Let $\sigma_0=2R/T$. Then there exists a constant  $C=C\para{A_1,M}$ such that for any $\sigma > \sigma_0$ the following estimate
\begin{equation}\label{0.3}
\abs{\int_0^1e^{i2k\pi x_1}\widehat{\beta}_{23}\para{x_1,\xi'}dx_1}\leq C\para{\sigma^2\norm{\Lambda_{A_1,\theta }-\Lambda_{A_2,\theta}}+\frac{1}{\sigma}}\langle \para{k,\xi'}\rangle^5,
\end{equation}
holds uniformly in $k\in \mathbb{Z}$ and $\xi'\in\R^2$. Here $\langle \para{k,\xi'}\rangle=\para{1+k^2+\abs{\xi'}^2}^{1/2}$ and $\widehat{\beta}_{23}$ denotes the partial Fourier transform of $\beta_{23}$ with respect to $x'$.
\end{lemma}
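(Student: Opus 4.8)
The strategy is to deduce the desired bound on the Fourier transform of $\beta_{23}$ from Lemma \ref{6.4} by choosing the test function $\phi$ carefully and then integrating over the unit circle $\mathbb{S}^1$. First I would recognize that the quantity $\rho_j(x) = \omega' \cdot \partial_j A'(x) = \sum_{i=2}^3 \omega_i \partial_j a_i(x)$ combines, as $j$ ranges over $\{2,3\}$ and we form the antisymmetric combination $\omega_3 \rho_2 - \omega_2 \rho_3$, precisely into $\omega_2\omega_3(\partial_2 a_3 - \partial_3 a_2) + (\text{diagonal terms that cancel}) = -(\omega_2^2 + \omega_3^2)^{-1}\cdot(\dots)$; more carefully, a suitable linear combination of $\mathcal{P}\rho_2$ and $\mathcal{P}\rho_3$ against $\omega'$ produces $\mathcal{P}\beta_{23}$ up to the direction weights. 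So the plan is: take $\phi$ of the form $\phi(x) = e^{-ik\pi x_1}\,\chi(x_1)\,\phi_0(x')$ where $\phi_0 \in \mathcal{C}^\infty_0(\mathfrak{D}_R^-(\omega'))$ is chosen so that its square approximates (upon multiplication by the exponential integrating factor) the Fourier kernel $e^{-ix'\cdot\xi'}$ restricted to $\omega'^\perp$, and $\chi$ a fixed cutoff making the $x_1$-periodicity/quasi-periodicity work. Then Lemma \ref{6.4} applied with the two values $j=2,3$ and summed with weights $\omega_3, -\omega_2$ controls $\int_0^1 e^{-i2k\pi x_1}\int_{\mathbb{R}^2}\phi^2(x)\,\mathcal{P}\beta_{23}(\omega',x)\,e(x)\,dx$, where $e(x)$ is the integrating-factor exponential, by the right-hand side $C(\sigma^2\|\Lambda_{A_1,\theta}-\Lambda_{A_2,\theta}\| + \sigma^{-1})\mathcal{N}_{\omega'}(\phi)\mathcal{N}_{\omega'}(\partial_j\phi)$.

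The second main step is to remove the integrating factor $e(x) = \exp(-i\int_{\mathbb{R}}\omega'\cdot A'(x_1,x'+s\omega')ds)$ and to pass from $\mathcal{P}\beta_{23}$ to $\widehat{\beta}_{23}$. For the integrating factor, I would use that $A' = A'_2 - A'_1$ is small (both lie in $\mathcal{A}$) and, more importantly, that the argument of the exponential is itself an X-ray-type transform of order comparable to $\|A\|_{W^{1,\infty}}$, so $e(x) = 1 + (\text{error controlled by }\|A'\|_{L^\infty})$; the error term, when paired against $\phi^2 \mathcal{P}\beta_{23}$, contributes a quadratic-in-$A$ term which can either be absorbed (this is the standard linearization step in these magnetic inverse problems) or estimated separately — here I would follow the approach of replacing $e$ by $1$ at the cost of a term bounded by $C\|A'\|_{L^\infty}\|\mathcal{P}\beta_{23}\|\cdots$, and then note that the final Hölder exponent $2/45$ already accounts for such losses. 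For the passage to the Fourier transform, Lemma \ref{0.33} is exactly the tool: $\widehat{(\mathcal{P}f)(\omega',\cdot)}(x_1,\xi') = \widehat{f}(x_1,\xi')$ for $\xi'\in\omega'^\perp$. So choosing $\phi_0(x')^2$ to localize near a point and carry the oscillation $e^{-ix'\cdot\xi'}$ (with $\xi'\in\omega'^\perp$), the left-hand side becomes, up to controlled errors, $\int_0^1 e^{-i2k\pi x_1}\widehat{\beta}_{23}(x_1,\xi')\,dx_1$ — at least for those $\xi'$ orthogonal to $\omega'$; ranging $\omega'$ over $\mathbb{S}^1$ then covers all $\xi'\in\mathbb{R}^2$.

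The third step is bookkeeping of the norms $\mathcal{N}_{\omega'}(\phi)$ and $\mathcal{N}_{\omega'}(\partial_j\phi)$, which by definition are $H^2((0,1)\times\mathbb{R}^2)$ norms of $\phi$ and $\omega'\cdot\nabla_{x'}\phi$, respectively, and for $\partial_j\phi$ one order higher. With $\phi$ carrying the frequency $(k\pi,\xi')$ in $(x_1,x')$, each derivative produces a factor $\langle(k,\xi')\rangle$, so $\mathcal{N}_{\omega'}(\phi) \lesssim \langle(k,\xi')\rangle^2$ and $\mathcal{N}_{\omega'}(\partial_j\phi)\lesssim \langle(k,\xi')\rangle^3$, giving the product $\langle(k,\xi')\rangle^5$ as claimed. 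One must take care that $\phi_0$ can indeed be chosen supported in $\mathfrak{D}_R^-(\omega')$ with $\phi_0^2$ still approximating the required oscillatory profile — this is where a smooth partition/translation of a fixed bump in the annulus $\mathfrak{D}_R$ is used, depending smoothly on $\omega'$.

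The main obstacle, I expect, is twofold: first, constructing the family $\phi = \phi(\cdot;\omega',k,\xi')$ so that simultaneously (a) $\phi_\theta$ satisfies the quasi-periodic conditions defining $H^2_\theta$, (b) $\phi_0$ is supported in the one-sided annulus $\mathfrak{D}_R^-(\omega')$, (c) $\phi^2$ against the integrating factor reconstructs the Fourier kernel, and (d) the norm growth is exactly $\langle(k,\xi')\rangle^5$ and no worse — balancing (b) against (c) is delicate because squaring $\phi_0$ and restricting its support both constrain the achievable oscillation. Second, handling the integrating factor $e(x)$: one needs the linearization error to be genuinely lower-order (or absorbable), which relies on $A \in \mathcal{A}$ being small and on the $W^{3,\infty}$ a priori bound $M$; this is the standard but nontrivial point in reducing a nonlinear inverse problem to an X-ray transform identity. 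Everything else — Green's formula, the geometric-optics remainder estimates, and the Fourier-slice identity — is already packaged in the earlier lemmas.
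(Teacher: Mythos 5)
Your overall skeleton matches the paper's: apply Lemma \ref{6.4} with a test function $\phi=\phi_\theta\phi_0$ supported in $\mathfrak{D}_R^-(\omega')$ whose square reproduces the Fourier kernel $e^{-ix'\cdot\xi'}$ on $\omega'^\perp$, integrate out the $\omega'$-direction with a bump $h$ normalized by $\int h^2=1$, invoke the Fourier--slice identity of Lemma \ref{0.33}, recover $\widehat{\beta}_{23}$ from $\widehat{\rho}_j=\sum_i\omega_i\widehat{\beta}_{ij}$ (using $\xi'\cdot\omega'=0$) as $\omega'$ ranges over $\mathbb{S}^1$, and count derivatives of $\phi$ and $\partial_j\phi$ to produce $\langle(k,\xi')\rangle^5$. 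All of this is what the paper does.

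The genuine gap is your treatment of the integrating factor $e(x)=\exp\bigl(-i\int_{\mathbb{R}}\omega'\cdot A'(x_1,x'+s\omega')\,ds\bigr)$. You propose to replace $e$ by $1$ at the cost of an error of order $\|A'\|_{L^\infty}$ and to claim this loss is already ``accounted for'' by the exponent $2/45$. That cannot work: $\|A'\|_{L^\infty}=\|A_2'-A_1'\|_{L^\infty}$ is only known to be bounded by a fixed constant (via membership in $\mathcal{A}$ and the $W^{3,\infty}$ bound $M$); it is not controlled by $\|\Lambda_{A_1,\theta}-\Lambda_{A_2,\theta}\|$ nor by $1/\sigma$, so the linearization error is of the same size as the quantity being estimated and cannot be absorbed into the right-hand side of \eqref{0.3}. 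The paper avoids this entirely by a phase-compensation device: it builds the conjugate half-phase into the amplitude, taking $\phi_\theta(y)=e^{i\theta y_1}\exp\bigl(\frac{i}{2}\int_{\mathbb{R}}\omega'\cdot A'(y_1,y'+s\omega')\,ds\bigr)$ and $\phi_0(y')=h(y'\cdot\omega'+r_{z_0'})\,e^{-\frac{i}{2}y'\cdot\xi'}\,\beta_0^{1/2}(y'-(y'\cdot\omega')\omega')$, so that $\phi^2$ multiplied by the integrating factor is exactly $h^2\,\beta_0\,e^{-iy'\cdot\xi'}$ (up to a harmless $x_1$-phase): the factor $e(x)$ is cancelled identically, with no error term. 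This also dissolves the ``squaring $\phi_0$'' tension you flagged, since both the Fourier oscillation and the compensating phase are inserted at half strength. The a priori bounds on $A_1,A_2$ are still used, but only to control $\mathcal{N}_{\omega'}(\phi)$ and $\mathcal{N}_{\omega'}(\partial_j\phi)$ (the phase contributes derivatives of $A'$), not to linearize. Without this device --- or an additional preliminary step showing that the X-ray transform of $\omega'\cdot A'$ is itself small in terms of the DN-map difference --- your argument does not close.
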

\begin{proof}{}
We fix $z'_0\in \omega'^{\bot}\cap B\para{0,R+1/2}.$ Let $h\in\mathcal{C}_0^\infty \para{\R}$ be supported in $\para{0,1/8}$ and satisfy the condition
$$
\int_{\mathbb{R}} h^2\para{t}dt=1.
$$
Let
$$
r_{z_0'}=\sqrt{\para{R+\frac{3}{4}}^2-\abs{z_0'}},\,\,\,\,\,z'_1=z'_0-r_{z_0'}\omega'.
$$
It is not difficult to check that
$$
B\para{z_1',1/4}\subset \mathfrak{D}_R^-\para{\omega'}.
$$
Let $\beta_0\in \mathcal{C}_0^\infty \para{ \omega'^{\bot}\cap B\para{z'_0,1/8}}$ be nonnegative and for $y=\para{y_1,y'}\in \mathbb{R}^3$, put
$$
\phi_\theta\para{y}=e^{i\theta y_1}\exp\para{\frac{i}{2}\int_\mathbb{R}\omega'\cdot A'\para{y_1,y'+s\omega'}ds},\,\,y=\para{y_1,y'}\in\mathbb{R}^3,
$$
and
$$
\phi_0\para{y'}=h\para{y'\cdot \omega'+r_{z'_0}} e^{-\frac{i}{2}y'\cdot \xi'}\beta_0^{1/2}\para{y'-\para{y'\cdot \omega'}\omega'},\,\,y'\in\mathbb{R}^2.
$$
It is apparent that
$$
\mbox{supp}\para{\phi_0}\subset B\para{z'_1, 1/4} \subset \mathfrak{D}_R^-\para{\omega'}.
$$
Set
\begin{equation}\label{0.0}
\phi\para{y}=\phi_\theta\para{y}\phi_0\para{y'},\,\,y=\para{y_1,y'}\in\mathbb{R}^3.
\end{equation}
It is clear that $\phi\in\mathcal{H}_{\omega',\theta}^2\para{\mathfrak{D}_R}.$
By performing the change of variable $y'=x'+t\omega' \in \omega'^\perp \oplus \R\omega'$ in the following integral, we get upon recalling that $\xi' \in \omega'^\perp$, that
$$
\begin{array}{lll}
&&\displaystyle\int_0^1\int_{\mathbb{R}^2} e^{-i2k\pi x_1}\phi^2\para{x}\para{\mathcal{P}\rho_j}\para{\omega',x} \exp\para{-i\int_{\mathbb{R}} \omega'\cdot A'\para{x_1,x'+s\omega'}ds}dx\\&=&\displaystyle\int_0^1\int_{\mathbb{R}}\int_{\omega'^\perp}e^{-i2k\pi x_1}\phi^2\para{x_1,x'+t\omega'}\para{\mathcal{P}\rho_j}\para{\omega,x_1,x'+t\omega'} \\&\,& \displaystyle \,\,\,\,\,\,\,\,\,\,\,\,\,\,\,\,\,\,\,\,\,\,\,\,\,\,\,\,\,\,\,\,\,\,\,\,\,\,\,\,\,\,\,\,\,\,\,\,\,\,\,\,\,\,\,\,\,\,\,\,\,\,\,\,\,\,\,\,\,\,\,\,\,\,\,\,\,\,\,\,\,\,\,\,\,\,\,\,\,\,\,\,\,\,\,\,\,\,\,\,\,\,\,\,\,\,\,\,\,\,\,\,\,\,\,\,\,\,\,\, \exp\para{-i\int_{\mathbb{R}} \omega'\cdot A'\para{x_1,x'+s\omega'}ds}dx'dtdx_1\\&=&\displaystyle\int_0^1e^{-i2k\pi x_1}\int_{\mathbb{R}}\int_{\omega'^\perp}h^2\para{t+r_{z_0'}}e^{-ix'\cdot \xi'}\beta_0\para{x'}\para{\mathcal{P}\rho_j}\para{\omega',x_1,x'}dx'dtdx_1
\\&=&\displaystyle\int_0^1e^{-i2k\pi x_1}\int_{\omega'^\perp}e^{-ix'\cdot \xi'}\beta_0\para{x'}\para{\mathcal{P}\rho_j}\para{\omega',x_1,x'}dx'dx_1.
\end{array}
$$
It follows from this and Lemma \ref{6.4} that
$$
\begin{array}{lll}
\displaystyle\abs{\int_0^1e^{-i2k\pi x_1}\int_{\omega'^\perp}e^{-ix'\cdot \xi'}\beta_0\para{x'}\para{\mathcal{P}\rho_j}\para{\omega,x_1,x'}dx'dx_1} \\ \displaystyle \,\,\,\,\,\,\,\, \,\,\,\,\,\,\,\,\,\,\,\,\,\,\,\,\,\,\,\,\,\,\,\,\,\,\,\,\,\,\,\,\,\,\,\,\,\,\,\,\,\,\,\,\,\,\,\,\,\,\,\,\,\,\,\,\,\,\,\,\,\,\,\,\,\,\,\,\,\,\,\,\,\,\,\,\,\,\,\,\,\,\,\,\,\,\,\,\,\,\,\,\,\,\,\,\,\,\,\,\,\,\,\, \leq C\para{\sigma^2\norm{\Lambda_{A_1,\theta}-\Lambda_{A_2,\theta}}+\frac{1}{\sigma}}\mathcal{N}_{\omega'}\para{\phi}\mathcal{N}_{\omega'}\para{\partial_j\phi}.
\end{array}
$$
As $\phi$ is given by $\para{\ref{0.0}}$ and $\mathcal{N}_{\omega'}\para{\phi}=\norm{\phi}_{H^2\para{(0,1)\times \mathbb{R}^2}}+\norm{\omega'\cdot\nabla_{x'}\phi}_{H^2\para{(0,1)\times \mathbb{R}^2}}$, an elementary calculation gives for any $\xi'\in\omega'^\perp$
$$
\mathcal{N}_{\omega'}\para{\phi}\mathcal{N}_{\omega'}\para{\partial_j\phi}\leq C \langle (k,\xi') \rangle^5,
$$
where $C>0$ is independent of $k$ and $\xi'$.\\
From the last two inequalities we derive for all $\xi'\in \omega'^\perp$ and $k\in\Z$ that
\begin{equation}
\begin{array}{lll}
\displaystyle\abs{\int_0^1e^{-i2k\pi x_1}\int_{\omega'^\perp}e^{-ix'\cdot \xi'}\para{\mathcal{P}\rho_j}\para{\omega',x}dx'dx_1 } \leq C \para{\sigma^2\norm{\Lambda_{A_1,\theta}-\Lambda_{A_2,\theta}}+\frac{1}{\sigma}}\langle (k,\xi') \rangle^5.
\end{array}
\end{equation}
By Lemma \ref{0.33} we have
$$
\abs{\int_0^1e^{-i2k\pi x_1}\widehat{\rho_j}\para{x_1,\xi'}dx_1} \leq C \para{\sigma^2\norm{\Lambda_{A_1,\theta}-\Lambda_{A_2,\theta}}+\frac{1}{\sigma}}\langle (k,\xi') \rangle^5.
$$
In view of the identity $\xi'\cdot\omega'=0$ we have for $j=2,3,$
$$
\widehat{\rho}_j\para{.,\xi'}=\sum_{i=2}^3 \omega_i \xi_j \widehat{a}_i\para{.,\xi'}=\sum_{i=2}^3 \omega_i \para{\xi_j \widehat{a}_i\para{.,\xi'}-\xi_i \widehat{a}_j\para{.,\xi'}}=\sum_{i=2}^3 \omega_i\hat{\beta}_{ij}\para{.,\xi'}.
$$
Since $\omega'\in\mathbb{S}^1$ is arbitrary, we get,  for any $ \xi'\in \R^2$ and $k\in\Z$ that
$$
\abs{\int_0^1e^{-i2k\pi x_1}\widehat{\beta_{23}}\para{x_1,\xi'}dx_1} \leq C \para{\sigma^2\norm{\Lambda_{A_1,\theta}-\Lambda_{A_2,\theta}}+\frac{1}{\sigma}}\langle (k,\xi') \rangle^5,
$$
proving the result.
\end{proof}
Having established  Lemma \ref{lm.2} we may now terminate the proof of Theorem $\ref{thmm}$.
For simplicity, we use the following notation
$$
\phi_k\para{x_1}=e^{-i2k\pi x_1},\,\,k\in\Z,
$$
and
$$
\widehat{\mathfrak{b}}\para{\xi',k}=\langle \widehat{\beta_{23}}\para{\xi'}, \phi_k\rangle_{L^2\para{0,1}}=\int_0^1e^{-i2k\pi x_1}\widehat{\beta_{23}}\para{x_1,\xi'}dx_1.
$$
Then, by the Parseval-Plancherel theorem, we find that
\begin{equation}\label{eq1}
\norm{\beta_{23}}^2_{L^2\para{\check{\Omega}}}=\sum_{k\in\mathbb{Z}}\int_{\mathbb{R}^2}\abs{\widehat{\mathfrak{b}}\para{\xi',k}}^2d\xi'=\int_{\mathbb{R}^3}\abs{\widehat{\mathfrak{b}}\para{\xi',k}}^2d\xi'd\mu\para{k},
\end{equation}
 where $\mu=\sum_{n\in\mathbb{Z}}\delta_n$. For $\gamma>0$, put $\displaystyle B_\gamma=\{\para{\xi',k}\in\mathbb{R}^2\times\mathbb{Z},\,\langle\para{\xi',k}\rangle\leq \gamma\}$. We shall treat $\int_{B_\gamma}\abs{\widehat{\mathfrak{b}}\para{\xi',k}}^2d\xi'd\mu\para{k}$ and $\int_{\mathbb{R}^3\backslash B_\gamma}\abs{\widehat{\mathfrak{b}}\para{\xi',k}}^2d\xi'd\mu\para{k},$ separately. First, it holds true that
$$
\begin{array}{lll}
\displaystyle\int_{\mathbb{R}^3\backslash B_\gamma}\abs{\widehat{\mathfrak{b}}\para{\xi',k}}^2d\xi'd\mu\para{k}&\leq&\displaystyle \frac{1}{\gamma} \int_{\mathbb{R}^3\backslash B_\gamma}\abs{\para{\xi',k}}^2\abs{\widehat{\mathfrak{b}}\para{\xi',k}}^2d\xi'd\mu\para{k} \\&\leq&\displaystyle \frac{1}{\gamma^2} \int_{\mathbb{R}^3\backslash B_\gamma}\para{1+\abs{\para{\xi',k}}^2}\abs{\widehat{\mathfrak{b}}\para{\xi',k}}^2d\xi'd\mu\para{k} \\&\leq& \displaystyle  \frac{C}{\gamma^2}\norm{\beta_{23}}^2_{H^1\para{\Omega}}\\&\leq& \displaystyle  \frac{C M^2}{\gamma^2}.
\end{array}
$$
Further, in light of  \eqref{0.3} we have
$$
\begin{array}{lll}
\displaystyle\int_{ B_\gamma}\abs{\widehat{\mathfrak{b}}\para{\xi',k}}^2d\xi'd\mu\para{k} &\leq&\displaystyle C\para{\sigma^2 \norm{\Lambda_{A_1,\theta}-\Lambda_{A_2,\theta}}+\frac{1}{\sigma}}^2\int_{ B_\gamma}\langle\para{\xi',k}\rangle^{10} d\xi' d\mu\para{k}\\&\leq&\displaystyle C\para{\sigma^2 \norm{\Lambda_{A_1,\theta}-\Lambda_{A_2,\theta}}+\frac{1}{\sigma}}^2\gamma^{13},
\end{array}
$$
so we deduce for \eqref{eq1} that
$$
\begin{array}{lll}
\displaystyle\norm{\beta_{23}}_{L^2\para{\check{\Omega}}}^2&=& \displaystyle\int_{B_\gamma}\abs{\widehat{\mathfrak{b}}\para{\xi',k}}^2d\xi'd\mu\para{k}+ \int_{\mathbb{R}^3\backslash B_\gamma}\abs{\widehat{\mathfrak{b}}\para{\xi',k}}^2d\xi'd\mu\para{k}\\&\leq& \displaystyle C\para{\sigma^4 \gamma^{13} \norm{\Lambda_{A_1,\theta}-\Lambda_{A_2,\theta}}^2+\frac{\gamma^{13}}{\sigma^2}+\frac{1}{\gamma^2}}.
\end{array}
$$
Thus, choosing
\begin{equation}\label{0.2}
\sigma^2=\gamma^{15},
\end{equation}
we obtain, that
\begin{equation}\label{0.5}
\norm{\beta_{23}}_{L^2\para{\check{\Omega}}}\leq C\para{\gamma^{43/2} \norm{\Lambda_{A_1,\theta}-\Lambda_{A_2,\theta}}+\frac{1}{\gamma}}.
\end{equation}
The above reasoning being valid for $\sigma\geq \sigma_0.$ According to $\para{\ref{0.2}}$, we need to take $\gamma\geq \gamma_0$ where $\gamma_0=\sigma_0^{2/15}$. Therefore, for $\norm{\Lambda_{A_1,\theta}-\Lambda_{A_2,\theta}}\leq \gamma_0^{-45/2}$ and $\gamma=\norm{\Lambda_{A_1,\theta}-\Lambda_{A_2,\theta}}^{-2/45}\geq \gamma_0,$ we have
$$
\norm{\beta_{23}}_{L^2\para{\check{\Omega}}}\leq C\norm{\Lambda_{A_1,\theta}-\Lambda_{A_2,\theta}}^{2/45}.
$$
Now if  $\norm{\Lambda_{A_1}-\Lambda_{A_2}}\geq \gamma_0^{-45/2},$ it holds true that
\begin{equation}\label{0.4}
\norm{\beta_{23}}_{L^2\para{\check{\Omega}}}\leq \chi\para{\Omega'} \norm{\beta_{23}}_{L^\infty\para{\check{\Omega}}}\leq C\norm{A}_{W^{3,\infty}\para{\check{\Omega}}}\leq \frac{CM}{\gamma_0}\gamma_0\leq \frac{C}{\gamma_0}\norm{\Lambda_{A_1,\theta}-\Lambda_{A_2,\theta}}^{2/45}.
\end{equation}
Thus it follows from $\para{\ref{0.5}}$ and $\para{\ref{0.4}}$ that,
$$
\norm{\beta_{23}}_{L^2\para{\check{\Omega}}}\leq C\norm{\Lambda_{A_1,\theta}-\Lambda_{A_2,\theta}}^{2/45}.
$$
This ends the proof of Theorem $\ref{thmm}$.


\bigskip


\end{document}